\newtheorem{lemma}{Lemma}[section]
\newtheorem{corollary}[lemma]{Corollary}
\newtheorem{theorem}[lemma]{Theorem}
\newtheorem{assumptions}[lemma]{Standing Assumptions}
\theoremstyle{definition} 
\newtheorem{definition}[lemma]{Definition}
\newtheorem{remark}[lemma]{Remark}
\newtheorem{remarks}[lemma]{Remarks}
\newcommand{\Nat}{{\mathbb N}}
\newcommand\reals{{\mathbb R}}
\newcommand{\dg}{\sp{\text{\rm o}}}
\newcommand{\spec}{\operatorname{spec}}
\begin{document}

\title{A Loomis-Sikorski theorem and functional calculus for a
generalized Hermitian algebra}

\author{David J. Foulis{\footnote{Emeritus Professor, Department of
Mathematics and Statistics, University of Massachusetts, Amherst,
MA; Postal Address: 1 Sutton Court, Amherst, MA 01002, USA;
foulis@math.umass.edu.}}\hspace{.05 in} Anna Jen\v cov\'a and Sylvia
Pulmannov\'{a}{\footnote{ Mathematical Institute, Slovak Academy of
Sciences, \v Stef\'anikova 49, SK-814 73 Bratislava, Slovakia; jenca@mat.savba.sk,
pulmann@mat.savba.sk. The second and third authors were supported by grant
VEGA No.2/0069/16.}}}

\date{}

\maketitle

\begin{abstract}
A generalized Hermitian (GH-) algebra is a generalization of the partially
ordered Jordan algebra of all Hermitian operators on a\linebreak Hilbert
space. We introduce the notion of a gh-tribe, which is a commutative
GH-algebra of functions on a nonempty set $X$ with pointwise partial
order and operations, and we prove that every commutative GH-algebra is
the image of a gh-tribe under a surjective GH-morphism. Using this result,
we prove each element $a$ of a GH-algebra $A$ corresponds to a real observable
$\xi\sb{a}$ on the $\sigma$-orthomodular lattice of projections in $A$ and
that $\xi\sb{a}$ determines the spectral resolution of $a$. Also, if $f$
is a continuous function defined on the spectrum of $a$, we formulate
a definition of $f(a)$, thus obtaining a continuous functional calculus
for $A$.
\end{abstract}

\section{Introduction}

Generalized Hermitian (GH-) algebras, which were introduced in \cite{FPgh}
and further studied in \cite{FPspin, FPreg}, incorporate several important
algebraic and order theoretic structures including effect algebras \cite{FoBe},
MV-algebras \cite{Chang}, orthomodular lattices \cite{K}, Boolean algebras
\cite{Sik}, and Jordan algebras \cite{Mcc}. Apart from their intrinsic
interest, all of the latter structures host mathematical models for
quantum-mechanical notions such as observables, states, properties, and
experimentally testable propositions \cite{DvPu, Var} and thus are pertinent
in regard to the quantum-mechanical theory of measurement \cite{BLPM}.

It turns out that GH-algebras are special cases of the more general synaptic
algebras introduced in \cite{Fsa}, and further studied in \cite{FPproj, FPtype,
FPSym, FPcom, FJP2proj, FJPprojef, FJPstat, FJPvect, Pid}. Thus, in this paper,
it will be convenient for us to treat GH-algebras as special kinds of synaptic
algebras (see Section \ref{sc:SAGHA} below).

A real observable $\xi$ for a physical system ${\mathcal S}$ is understood to
be a quantity that can be experimentally measured, and that when mesured yields
a result in a specified set $\reals\sb{\xi}$ of real numbers. If $f$ is a
function defined on $\reals\sb{\xi}$, then $f(\xi)$ is defined to be the
observable that is measured by measuring $\xi$ to obtain, say, the result
$\lambda\in\reals\sb{\xi}$, and then regarding the result of this measurement
of $f(\xi)$ to be $f(\lambda)$. A state $\rho$ for ${\mathcal S}$ assigns to
$\xi$ an expectation, i.e., the long-run average value of a sequence of
independent measurements of $\xi$ in state $\rho$.

As indicated by the title, one of our purposes in this paper is to formulate and
prove a version of the Loomis-Sikorski theorem for commutative GH-algebras
(Theorem \ref{th:LSCGH} below). This theorem is a generalization of the classical
Loomis-Sikorski theorem for Boolean $\sigma$-algebras. In Theorem \ref{th:observ}
we use our generalized Loomis-Sikorski theorem to show that each element $a$ in a
GH-algebra $A$ corresponds to a real observable $\xi\sb{a}$. In Corollary \ref
{co:stat}, we obtain an integral formula for the expectation of the observable
$\xi\sb{a}$ in state $\rho$. Definition \ref{df:f(a)} and the following results
provide a continuous functional calculus for $A$.

\section{Preliminaries} \label{sc:Prelim}

In this section we review some notions and some facts that will be needed
as we proceed. We abbreviate `if and only if' as `iff,' the notation $:=$
means `equals by definition,' $\reals$ is the ordered field of real numbers,
$\reals\sp{+}:=\{\alpha\in\reals:0\leq\alpha\}$, and $\Nat:=\{1,2,3,...\}$
is the well-ordered set of natural numbers.

\begin{definition} \label{df:CompletenessProps}
Let ${\mathcal P}$ be a partially ordered set (poset). Then{\rm:}
\begin{enumerate}
\item[(1)] Let $p,q\in{\mathcal P}$. Then an existing supremum, i.e. least upper bound,
 (an existing infimum, i.e., greatest lower bound) of $p$ and $q$ in ${\mathcal P}$
 is written as $p\vee q$ ($p\wedge q$). If it is necessary to make clear that the
 supremum (infimum) is calculated in ${\mathcal P}$, we write $p\vee\sb{\mathcal P}q$
 ($p\wedge\sb{\mathcal P}q$).  $\mathcal P$ is a \emph{lattice} iff $p\vee q$ and
 $p\wedge q$ exist for all $p,q\in{\mathcal P}$. If ${\mathcal P}$ is a lattice,
 then a nonempty subset ${\mathcal Q}\subseteq{\mathcal P}$ is a \emph{sublattice}
 of ${\mathcal P}$ iff, for all $p,q\in{\mathcal Q}$, $p\vee q, p\wedge q\in
 {\mathcal Q}$, in which case ${\mathcal Q}$ is a lattice in its own right with
 $p\vee\sb{\mathcal Q}q=p\vee q$ and $p\wedge\sb{\mathcal Q}q=p\wedge q$.
\item[(2)] A lattice ${\mathcal P}$ is \emph{distributive} iff, for all $p,q,r\in
 {\mathcal P}$, $p\wedge(q\vee r)=(p\wedge q)\vee(p\wedge r)$, or equivalently,
 $p\vee(q\wedge r)=(p\vee q)\wedge(p\vee r)$.
\item[(3)] The poset ${\mathcal P}$ is \emph{bounded} iff there are elements,
 usually denoted by $0$ and $1$, such that $0\leq p\leq 1$ for all $p\in
 {\mathcal P}$. If ${\mathcal P}$ is a bounded lattice then elements $p,q\in
 {\mathcal P}$ are \emph{complements} of each other iff $p\wedge q=0$  and
 $p\vee q=1$. A \emph{Boolean algebra} is a bounded distributive lattice
 in which every element has a complement. In a Boolean algebra ${\mathcal P}$,
 the complement of an element $p$ is unique, and is often denoted by $p\sp{\prime}$.
\item[(3)] ${\mathcal P}$ is \emph{$\sigma$-complete} iff every sequence in
 $\mathcal P$ has both a supremum and an infimum in ${\mathcal P}$.
\item[(4)] ${\mathcal P}$ is \emph{Dedekind $\sigma$-complete} iff every sequence
 in ${\mathcal P}$ that is bounded above (below) has a supremum (an infimum) in
 ${\mathcal P}$.
\item[(5)] If $p\sb{1}\leq p\sb{2}\leq p\sb{3}\leq\cdots$ is an ascending sequence
 in ${\mathcal P}$ with supremum $p=\bigvee\sb{n=1}\sp{\infty}p\sb{n}$ in
 ${\mathcal P}$, we write $p\sb{n}\nearrow p$. Similar notation $p\sb{n}\searrow p$
 applies to a descending sequence $p\sb{1}\geq p\sb{2}\geq p\sb{3}\geq\cdots$ with
 infimum $p=\bigwedge\sb{n=1}\sp{\infty}p\sb{n}$ in ${\mathcal P}$.
\item[(6)] ${\mathcal P}$ is \emph{monotone $\sigma$-complete} iff, for every
 bounded ascending (descending) sequence $(p\sb{n})\sb{n=1}\sp{\infty}$ in
 ${\mathcal P}$, there exists $p\in G$ with $p\sb{n}\nearrow p$ ($p\sb{n}\searrow p$).
\end{enumerate}
\end{definition}

\begin{remarks} \label{rm:involution}
An \emph{involution} on the poset ${\mathcal P}$ is a mapping $\sp{\prime}\colon
{\mathcal P}\to{\mathcal P}$ such that, for all $p,q\in{\mathcal P}$, $p\leq q
\Rightarrow q\sp{\prime}\leq p\sp{\prime}$ and $(p\sp{\prime})\sp{\prime}=p$.
For instance, the complementation mapping $p\mapsto p\sp{\prime}$ on a Boolean
algebra ${\mathcal P}$ is an involution. An involution on  a poset ${\mathcal P}$
provides a ``duality" between existing suprema and infima of subsets ${\mathcal Q}$
of ${\mathcal P}$ as follows: If the supremum $\bigvee{\mathcal Q}$ (the infimum
$\bigwedge{\mathcal Q}$) exists, then the infimum $\bigwedge\{q\sp{\prime}:q\in
{\mathcal Q}\}$ (the supremum $\bigvee\{q\sp{\prime}:q\in{\mathcal Q}\}$) exists
and equals $(\bigvee{\mathcal Q})\sp{\prime}$ (equals $(\bigwedge{\mathcal Q})
\sp{\prime}$). Thus, if ${\mathcal P}$ admits an involution, then condition (3)
in Definition \ref{df:CompletenessProps} is equivalent to the same condition for
sequences bounded above (below) only, and similar remarks hold for conditions (4)
and (6).
\end{remarks}

Recall that an \emph{order unit normed space} $(V,u)$ \cite[pp. 67--69]{Alf}
is a partially ordered linear space $V$ over $\reals$ with positive cone
$V\sp{+}=\{v\in V:0\leq v\}$ such that: (1) $V$ is \emph{archimedean}, i.e.,
if $v\in V$ and $\{nv:n\in\Nat\}$ is bounded above in $V$, then $-v\in V\sp{+}$.
(2) $u\in V\sp{+}$ is an \emph{order unit} (sometimes called a \emph{strong order
unit} \cite{Sch}), i.e., for each $v\in V$ there exists $n\in\Nat$ such that $v
\leq nu$. Then the \emph{order-unit norm} on $(V,u)$ is defined by $\|v\|=\inf
\{\lambda\in{\mathbb R}^+: -\lambda u\leq v\leq\lambda u\}$ for all $v\in V$. If
$(V,u)$ is an order unit normed space, then the mapping $v\mapsto -v$ is an
involution on $V$, so Remarks \ref{rm:involution} apply. If $V$ is an archimedean
partially ordered real linear space and $u\in V\sp{+}$ is an order unit, then---if
it is understood $u$ is the order unit in question---we may simply say that $V$
(rather than $(V,u)$) is an order unit normed space.

See \cite[Proposition 2.I.2]{Alf} and \cite[Proposition 7.12.(c)]{Good}, for a
proof of the following.

\begin{lemma} \label{lm:NormProps}
If $(V,u)$ is an order unit normed space, then for all $v,w\in V$, {\rm(i)}
$-\|v\|u\leq v\leq\|v\|u$ and {\rm(ii)} $-w\leq v\leq w \Rightarrow\|v\|\leq\|w\|$.
\end{lemma}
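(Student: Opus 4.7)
For part (i), my plan is to unpack the definition of the order-unit norm and then squeeze using the archimedean property. By definition, $\|v\|=\inf\{\lambda\in\reals^{+}:-\lambda u\leq v\leq\lambda u\}$, so I can choose a decreasing sequence $\lambda_{n}\searrow\|v\|$ with $-\lambda_{n}u\leq v\leq\lambda_{n}u$ for every $n$. Setting $\varepsilon_{n}:=\lambda_{n}-\|v\|\geq 0$, the upper inequality rearranges to $v-\|v\|u\leq\varepsilon_{n}u$, where $\varepsilon_{n}\to 0$. Passing to a subsequence if necessary so that $\varepsilon_{n}\leq 1/n$, I get $n(v-\|v\|u)\leq u$ for all $n\in\Nat$, so the set $\{n(v-\|v\|u):n\in\Nat\}$ is bounded above in $V$. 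The archimedean hypothesis on $(V,u)$ then forces $-(v-\|v\|u)\in V^{+}$, i.e., $v\leq\|v\|u$. The same argument applied to the lower inequality $-v-\|v\|u\leq\varepsilon_{n}u$ yields $-v\leq\|v\|u$, which together give the sandwich $-\|v\|u\leq v\leq\|v\|u$.

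For part (ii), I would feed (i) back in. Assuming $-w\leq v\leq w$, apply (i) to $w$ to obtain $-\|w\|u\leq w\leq\|w\|u$. Combining $v\leq w\leq\|w\|u$ and $-v\leq w\leq\|w\|u$ (the latter because $-w\leq v$ gives $-v\leq w$), I conclude $-\|w\|u\leq v\leq\|w\|u$. Hence $\|w\|$ is an element of the set $\{\lambda\in\reals^{+}:-\lambda u\leq v\leq\lambda u\}$ whose infimum defines $\|v\|$, so $\|v\|\leq\|w\|$.

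The only genuinely delicate point is the archimedean passage to the limit in (i); the infimum in the definition of $\|v\|$ need not be attained a priori, so one cannot simply read off $-\|v\|u\leq v\leq\|v\|u$ from the definition. The trick is to convert the sequence of strict-looking bounds into a boundedness statement to which the archimedean axiom can be applied, which is exactly what multiplying through by $n$ accomplishes. Once (i) is established, (ii) is a direct comparison argument, and no further use of completeness or the involution $v\mapsto -v$ is required beyond noting $-w\leq v\iff -v\leq w$.
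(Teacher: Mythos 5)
Your proof is correct. The paper does not prove Lemma \ref{lm:NormProps} itself but defers to \cite[Proposition 2.I.2]{Alf} and \cite[Proposition 7.12.(c)]{Good}, and your argument---using the archimedean axiom to close the gap between the infimum defining $\|v\|$ and an attained bound in part (i), then reading (ii) off the definition of the norm as an infimum---is exactly the standard argument found there.
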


The next theorem (see \cite{FPHand}) follows, \emph{mutatis mutandis}, from David
\linebreak Handelman's proof of \cite[Proposition 3.9]{Hand}.

\begin{theorem} \label{th:Handelman}
If $(V,u)$ is an order unit normed space such that $V$ is monotone
$\sigma$-complete, then $V$ is a Banach space under the order-unit norm.
\end{theorem}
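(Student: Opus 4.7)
The plan is to show every norm-Cauchy sequence in $V$ converges, and for this it suffices to work with a sufficiently rapidly converging subsequence: given a Cauchy sequence $(v_{n})$, I first extract a subsequence $(w_{n})$ with $\|w_{n+1}-w_{n}\|<2\sp{-n}$ for every $n\in\Nat$. Once I show $(w_{n})$ converges in norm to some $v\in V$, a standard argument combining convergence of the subsequence with the Cauchy property forces $v_{n}\to v$ as well.

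The core idea is to sandwich $(w_{n})$ between a monotone ascending sequence and a monotone descending sequence, both of which must converge by monotone $\sigma$-completeness (after noting, via Remarks \ref{rm:involution} and the involution $v\mapsto -v$, that the descending case is automatic once the ascending case is established). By Lemma \ref{lm:NormProps}(i), $\|w_{n+1}-w_{n}\|<2\sp{-n}$ gives $-2\sp{-n}u\leq w_{n+1}-w_{n}\leq 2\sp{-n}u$. I set
\[
a\sb{n}:=w\sb{n}-2\sp{-(n-1)}u,\qquad b\sb{n}:=w\sb{n}+2\sp{-(n-1)}u.
\]
A direct computation shows $a\sb{n+1}-a\sb{n}=(w\sb{n+1}-w\sb{n})+2\sp{-n}u\geq 0$ and $b\sb{n+1}-b\sb{n}=(w\sb{n+1}-w\sb{n})-2\sp{-n}u\leq 0$, so $(a\sb{n})$ is ascending and $(b\sb{n})$ is descending. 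Since $a\sb{n}\leq b\sb{n}$ for all $n$, monotonicity yields $a\sb{n}\leq b\sb{1}$, so $(a\sb{n})$ is bounded above.

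Monotone $\sigma$-completeness then gives $v\in V$ with $a\sb{n}\nearrow v$. Because $a\sb{n}\leq v$ for every $n$, and because the ``tail'' comparisons $a\sb{n}\leq a\sb{m}\leq b\sb{m}$ for $m\geq n$ together with $(b\sb{m})$ descending force $v\leq b\sb{n}$ as well, both $v$ and $w\sb{n}$ lie in the order-interval $[a\sb{n},b\sb{n}]=[w\sb{n}-2\sp{-(n-1)}u,\,w\sb{n}+2\sp{-(n-1)}u]$. Therefore $-2\sp{-(n-1)}u\leq v-w\sb{n}\leq 2\sp{-(n-1)}u$, and Lemma \ref{lm:NormProps}(ii) gives $\|v-w\sb{n}\|\leq 2\sp{-(n-1)}\to 0$.

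There is no serious obstacle here beyond designing the sandwich correctly: the construction of $a\sb{n}$ and $b\sb{n}$ must use exactly the geometric tail $2\sp{-(n-1)}u=\sum\sb{k=n}\sp{\infty}2\sp{-k}u$ so that consecutive differences absorb the bound on $w\sb{n+1}-w\sb{n}$. The only subtle point worth spelling out is the step $v\leq b\sb{n}$, which depends on the ascending sequence $(a\sb{n})$ eventually being compared with each fixed $b\sb{n}$ from above; this is immediate from $a\sb{m}\leq b\sb{m}\leq b\sb{n}$ for $m\geq n$ and the definition of the supremum. Once $w\sb{n}\to v$ is established, completeness of $V$ under the order-unit norm follows at once.
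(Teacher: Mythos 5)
Your proof is correct, and it is essentially the argument the paper defers to: the paper gives no proof of its own here, citing instead Handelman's Proposition 3.9 (and the note \cite{FPHand}), whose proof is exactly this sandwich construction $w_n - 2^{-(n-1)}u \leq v \leq w_n + 2^{-(n-1)}u$ applied to a rapidly Cauchy subsequence. All the steps check out, including the key point that each $b_n$ is an upper bound for the entire ascending sequence $(a_m)$ and hence dominates its supremum $v$.
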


Let $(V,u)$ be an order unit normed space. Then elements of the ``unit interval"
$V[0,u]:=\{e\in V:0\leq e\leq u\}$ are called \emph{effects}, and $V[0,u]$ is
organized into a so-called \emph{effect algebra} $(V[0,u];0,u,\sp{\perp},\oplus)$
\cite{FoBe} as follows: For $e,f\in V[0,u]$, $e\oplus f$ is defined iff $e+f\leq u$,
and then $e\oplus f:=e+f$; moreover, $e\sp{\perp}:=u-e$. The effect algebra $V[0,u]$
is partially ordered by the relation $e\leq f$ iff there is $g\in V[0,u]$ such that
$e\oplus g=f$, and this ordering coincides with that inherited from $V$. The mapping
$e\mapsto e\sp{\perp}$ is an involution on $V[0,u]$, so Remarks \ref{rm:involution}
apply. With the convexity structure induced by the linearity of $V$, $V[0,u]$ is a
convex effect algebra \cite{BBGP, BGP}. Two effects $e,f\in V[0,u]$ are said to be
\emph{{\rm(}Mackey{\rm)} compatible} iff there are elements $e\sb{1}, f\sb{1}, d\in
V[0,u]$ such that $e\sb{1}+f\sb{1}+d\leq u$, $e=e\sb{1}+d$, and $f=f\sb{1}+d$.

An effect algebra that forms a lattice is said to be \emph{lattice ordered},
or simply a \emph{lattice effect algebra} \cite{ZR}. A lattice effect algebra
in which every pair of elements are compatible is called an \emph{MV-effect
algebra}. As a lattice, an MV-effect algebra is distributive. It is known that
MV-effect algebras are mathematically equivalent to MV-algebras, which were
introduced by Chang \cite{Chang} as algebraic bases for many-valued logics.
(See, e.g., \cite{DvPu} for the relations between lattice effect algebras and
MV-algebras). Notice that a convex effect algebra is an MV-effect algebra iff
it is a lattice \cite{BBGP}. By a well-known result of D. Mundici \cite{Munint},
every MV-algebra is isomorphic to the unit interval $G[0,u]$ in a lattice ordered
abelian group (abelian $\ell$-group) $G$ with order unit $u$. An MV-algebra that
is a $\sigma$-complete lattice is called a $\sigma$\emph{MV-algebra} \cite{Dvls},
and it turns out that an MV-algebra is a $\sigma$MV-algebra iff the corresponding
abelian $\ell$-group is Dedekind $\sigma$-complete.

A partially ordered linear space $V$ over $\reals$ that is a lattice under
the partial order is called a \emph{vector lattice} or a \emph{Riesz space}.
Every vector lattice, and more generally, every abelian $\ell$-group, is
distributive \cite[Theorem 4]{Birk}. If $V$ is a vector lattice and $v\in V$,
then the \emph{absolute value} and the \emph{positive part} of $v$ are denoted
and defined by $|v|:=v\vee(-v)$ and $v\sp{+}:=v\vee 0$, respectively. A vector
lattice $V$ satisfies \emph{Dedekind's law}: For $v,w\in V$, $v\vee w + v\wedge
w=v+w$.

\begin{lemma} \label{lm:VecLat}
If $V$ is a vector lattice, then $V$ is monotone $\sigma$-complete iff
$V$ is Dedekind $\sigma$-complete.
\end{lemma}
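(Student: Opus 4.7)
The plan is to observe that one direction of the equivalence is immediate from the definitions, while the other requires a standard monotonization that is available precisely because $V$ is assumed to be a lattice. If $V$ is Dedekind $\sigma$-complete, then in particular every bounded ascending sequence has a supremum and every bounded descending sequence has an infimum, so $V$ is monotone $\sigma$-complete; nothing more is needed in this direction.

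For the converse, I would assume $V$ is monotone $\sigma$-complete and let $(v_n)_{n=1}^{\infty}$ be a sequence in $V$ bounded above by some $w \in V$. Since $V$ is a lattice, finite suprema exist, so I introduce the ascending sequence
\[
u_n := v_1 \vee v_2 \vee \cdots \vee v_n, \quad n \in \Nat.
\]
Each $u_n \leq w$, so $(u_n)$ is an ascending sequence bounded above, and monotone $\sigma$-completeness delivers $u \in V$ with $u_n \nearrow u$. I would then verify that $u = \bigvee_{n} v_n$: the chain $v_n \leq u_n \leq u$ shows that $u$ is an upper bound of $(v_n)$, and any other upper bound $w'$ satisfies $u_n \leq w'$ for every $n$, forcing $u \leq w'$. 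For a sequence bounded below, I would either run the dual argument with $u_n := v_1 \wedge \cdots \wedge v_n$ and the descending half of monotone $\sigma$-completeness, or appeal to the involution $v \mapsto -v$ on $V$ (cf.\ Remarks~\ref{rm:involution}), which exchanges bounded-above and bounded-below sequences and swaps suprema with infima.

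There is no real obstacle here: the whole argument rests on the trivial remark that in a lattice every finite collection has a supremum and an infimum, which lets us trade an arbitrary bounded sequence for a monotone one with the same bounds. The statement would fail for a general poset with the monotone $\sigma$-completeness property, so the lattice hypothesis is doing all the work.
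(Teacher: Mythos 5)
Your argument is correct and is essentially identical to the paper's: both directions are handled the same way, with the forward implication reduced to the ascending sequence $u_n = v_1\vee\cdots\vee v_n$, which has the same upper bounds as $(v_n)$, and the converse dismissed as immediate. Your extra remarks on the bounded-below case (via duality or the involution $v\mapsto -v$) are consistent with Remarks~\ref{rm:involution}, which the paper leaves implicit.
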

\begin{proof}
Suppose that $V$ is monotone $\sigma$-complete, let $(v\sb{n})\sb{n\in\Nat}$
be a sequence in $V$ that is bounded above, and define $(w\sb{n})\sb{n\in\Nat}$
by $w\sb{n}:=v\sb{1}\vee v\sb{2}\vee\cdots\vee v\sb{n}$ for all $n\in\Nat$. Then,
$(w\sb{n})\sb{n\in\Nat}$ is an ascending sequence in $V$ with the same set of
upper bounds as $(v\sb{n})\sb{n\in\Nat}$, whence $w\sb{n}\nearrow w\in V$, and
$w=\bigvee\sb{n\in\Nat}v\sb{n}$. Thus $V$ is Dedekind $\sigma$-complete. The
converse is obvious.
\end{proof}

Of course, by an \emph{order unit normed vector lattice}, we mean an order
unit normed space that is also a vector lattice.

\begin{theorem} \label{th:OUNVL}
If $V$ is a monotone $\sigma$-complete vector lattice with order unit $u$,
then $(V,u)$ is an order unit normed vector lattice and a Banach space
under the order-unit norm.
\end{theorem}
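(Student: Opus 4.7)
The plan is to reduce the theorem to Theorem \ref{th:Handelman}. Note that the only piece missing from the hypothesis of Theorem \ref{th:Handelman} is that $V$ is archimedean; once we establish this, $(V,u)$ will be an order unit normed space, hence (being already a vector lattice) an order unit normed vector lattice, and Theorem \ref{th:Handelman} will deliver the Banach space conclusion directly.

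Thus the main work is to prove that $V$ is archimedean. Let $v\in V$ and suppose $\{nv:n\in\Nat\}$ is bounded above by some $w\in V$. The goal is $-v\in V\sp{+}$, equivalently $v\sp{+}=v\vee 0=0$. First I would observe that since the lattice operations on $V$ are compatible with scalar multiplication by nonnegative reals, $nv\sp{+}=(nv)\vee 0\leq w\vee 0$ for every $n\in\Nat$, so the ascending sequence $(nv\sp{+})\sb{n\in\Nat}$ is bounded above. Monotone $\sigma$-completeness then yields some $s\in V$ with $nv\sp{+}\nearrow s$. Since $(n+1)v\sp{+}\leq s$ for all $n$, we have $nv\sp{+}\leq s-v\sp{+}$ for all $n$, so $s-v\sp{+}$ is also an upper bound of the ascending sequence, forcing $s\leq s-v\sp{+}$ and hence $v\sp{+}\leq 0$. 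Combined with $v\sp{+}\geq 0$ this gives $v\sp{+}=0$, so $v\leq 0$, i.e., $-v\in V\sp{+}$. Therefore $V$ is archimedean.

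With archimedeanness in hand, $(V,u)$ is an order unit normed space (and a vector lattice), so it is an order unit normed vector lattice. Since $V$ is monotone $\sigma$-complete, Theorem \ref{th:Handelman} applies and shows $V$ is a Banach space under the order-unit norm.

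The main (and essentially only nontrivial) obstacle is the archimedean property; the rest is simply assembling the hypotheses of Theorem \ref{th:Handelman}. The subtle point in the archimedean argument is the observation that scalar multiplication by $n$ commutes with $\vee 0$, which is what lets us convert a bounded-above sequence $nv$ into a bounded-above \emph{ascending} sequence $nv\sp{+}$ suitable for monotone $\sigma$-completeness.
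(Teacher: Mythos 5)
Your proof is correct and follows the same overall route as the paper: establish that $V$ is archimedean and then invoke Theorem \ref{th:Handelman}. The only difference is that where the paper simply cites \cite[3.2.5 Prop. 2]{Crist} for the archimedean property, you supply a correct self-contained proof of it (the passage from the bounded set $\{nv\}$ to the bounded ascending sequence $(nv\sp{+})$ via $n(v\vee 0)=(nv)\vee 0$, followed by the standard supremum-shift argument).
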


\begin{proof}
By \cite[3.2.5 Prop. 2]{Crist}, every monotone $\sigma$-complete vector
lattice is archimedean. By Theorem \ref{th:Handelman}, every monotone
$\sigma$ complete order unit normed space is Banach.
\end{proof}

\begin{lemma} \label{lm:rhovsnorm}
Let $(V,u)$ be an order unit normed vector lattice and let $v,w\in V$.
Then{\rm: (i)} $0<\epsilon\in\reals \Rightarrow |w|\leq\epsilon u
\Leftrightarrow\|w\|\leq\epsilon$. {\rm(ii)} If $(v\sb{n})\sb{n\in\Nat}$
is a sequence in $V$, then $\lim\sb{n\rightarrow\infty}v\sb{n}=v$ iff,
for every $0<\epsilon\in\reals$, there exists $N\in\Nat$ such that, for
all $n\in\Nat$, $N\leq n\Rightarrow|v\sb{n}-v|\leq\epsilon u$.
\end{lemma}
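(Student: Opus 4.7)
The plan is to reduce part (ii) to part (i) applied to the difference $v_n - v$, so the real work is in part (i), which in turn factors through the simple observation that the absolute value $|w| = w\vee(-w)$ in the vector lattice $V$ transforms the condition $|w|\leq \epsilon u$ into a two-sided inequality $-\epsilon u\leq w\leq \epsilon u$, which is exactly the shape that matches the order-unit norm.

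For part (i) I would establish two equivalences. First, $|w|\leq \epsilon u \Leftrightarrow -\epsilon u\leq w\leq \epsilon u$: the forward direction uses $w\leq w\vee(-w)=|w|\leq \epsilon u$ and $-w\leq w\vee(-w)=|w|\leq \epsilon u$; the reverse direction uses that $|w|=w\vee(-w)$ is the least upper bound of $\{w,-w\}$, so $w\leq \epsilon u$ and $-w\leq \epsilon u$ force $|w|\leq \epsilon u$. Second, $-\epsilon u\leq w\leq \epsilon u \Leftrightarrow \|w\|\leq \epsilon$: if the two-sided inequality holds then $\epsilon$ belongs to the set $\{\lambda\in\reals^{+}:-\lambda u\leq w\leq \lambda u\}$ whose infimum is $\|w\|$, giving $\|w\|\leq \epsilon$; conversely, Lemma \ref{lm:NormProps}(i) yields $-\|w\|u\leq w\leq \|w\|u$, and combining this with $\|w\|\leq \epsilon$ (together with $u\in V^{+}$) produces $-\epsilon u\leq w\leq \epsilon u$. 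Chaining the two equivalences gives (i).

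For part (ii), by definition of norm convergence in the order-unit normed space $(V,u)$, $\lim_{n\to\infty} v_n = v$ means that for every $\epsilon>0$ there exists $N\in\Nat$ such that $n\geq N$ implies $\|v_n-v\|\leq \epsilon$. Applying part (i) with $w:=v_n-v$ shows that $\|v_n-v\|\leq \epsilon$ is equivalent to $|v_n-v|\leq \epsilon u$, and the stated characterization follows.

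This is a routine verification rather than a substantive theorem, so there is no genuine obstacle; the only point that requires a moment's care is remembering that the hypothesis ``order unit normed vector lattice'' is exactly what allows the absolute value $|w|$ to exist and to satisfy $|w|=w\vee(-w)$, thereby bridging the order-theoretic statement $|w|\leq \epsilon u$ and the norm statement $\|w\|\leq \epsilon$ via the two-sided inequality.
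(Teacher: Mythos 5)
Your proposal is correct and follows essentially the same route as the paper: both establish (i) by chaining $|w|\leq\epsilon u\Leftrightarrow -\epsilon u\leq w\leq\epsilon u$ (via $|w|=w\vee(-w)$ being the least upper bound of $\{w,-w\}$) with $-\epsilon u\leq w\leq\epsilon u\Leftrightarrow\|w\|\leq\epsilon$ (via Lemma \ref{lm:NormProps}(i) and the definition of the order-unit norm), and both obtain (ii) by substituting $w=v\sb{n}-v$ into (i). No gaps.
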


\begin{proof}
Let $0<\epsilon\in\reals$ and $v,w\in V$. (i) By Lemma \ref{lm:NormProps}
(i), $w,-w\leq\|w\|u$, whence $\|w\|\leq\epsilon\Rightarrow w,-w\leq\epsilon u
\Rightarrow -\epsilon u\leq w\leq\epsilon u$. Conversely, $-\epsilon u\leq w\leq
\epsilon u\Rightarrow\|w\|\leq\epsilon$, whence $-\epsilon u\leq w\leq\epsilon u
\Leftrightarrow\|w\|\leq\epsilon$. Thus, since $|w|=w\vee(-w)$, we have
$|w|\leq\epsilon u\Leftrightarrow w,-w\leq\epsilon u\Leftrightarrow-\epsilon u
\leq w\leq\epsilon u\Leftrightarrow\|w\|\leq\epsilon$.  (ii) Putting $w=
v\sb{n}-v$ in (i), we have $|v\sb{n}-v|\leq\epsilon u\Leftrightarrow\|v\sb{n}-v\|
\leq\epsilon$, from which (ii) follows.
\end{proof}

Evidently, if $(V,u)$ is a Dedekind $\sigma$-complete order unit normed vector
lattice, then the unit interval $V[0,u]$ is a convex $\sigma$MV-algebra. And
conversely, by \cite{BBGP, Munint}, every convex $\sigma$MV-algebra is
isomorphic to the unit interval in a Dedekind $\sigma$-complete order unit normed
vector lattice.

Let $(V,u)$ be an order unit normed vector lattice. An element $e\in V$ is
called a \emph{characteristic element} \cite[Definition p. 127]{Good} (or a
\emph{unitary element} \cite[Definition 4.1.1.2.]{Crist}) iff $e\wedge(u-e)=0$.
The set $B$ of all characteristic elements in $V$ is a sub-effect algebra of
$V[0,u]$; moreover, $B$ is a sublattice of $V$, and as such $B$ is a Boolean
algebra with $u-e$ as the complement of $e\in B$ \cite[Remark p. 120]{Crist}.
Two elements $e,f\in B$ are said to be \emph{orthogonal} iff $e\wedge f=0$,
or equivalently, iff $e\leq u-f$. By Dedekind's law, if $e$ and $f$ are
orthogonal elements of $V$, then $e\vee f=e+f$. An element in $V$ is said
to be \emph{simple} iff it is a finite real linear combination of characteristic
elements. By an adaptation of the proof of \cite[Theorem 5.1]{FPsr}, it can be
shown that every simple element in $V$ can be written as a finite real linear
combination of pairwise orthogonal characteristic elements.

Let $X$ be a compact Hausdorff space. We define ${\mathcal F}(X)$ to be the field of
all compact open (clopen) subsets of $X$, noting that ${\mathcal F}(X)$ is a Boolean
algebra under set containment $\subseteq$.  Also, as usual, $C(X,\reals)$ denotes
the partially ordered commutative associative real unital Banach algebra of all
continuous functions $f\colon X\to\reals$, with pointwise partial order and pointwise
finitary operations. Then, with the constant function $x\mapsto 1$ (denoted simply
by $1$) as order unit, $C(X,\reals)$ is an order unit normed vector lattice, and the
order-unit norm coincides with the supremum (or uniform) norm on $C(X,\reals)$. For
$f\in C(X,\reals)$, $|f|=f\vee(-f)$ is the pointwise absolute value, and it follows
from Lemma \ref{lm:rhovsnorm} (ii) that order-unit norm limits of sequences in
$C(X,\reals)$ are pointwise limits.

We denote by $P(X,\reals)$ the Boolean algebra of all characteristic elements in
$C(X,\reals)$. It is not difficult to see that $P(X,\reals)=\{p\in C(X,\reals):p=
p\sp{2}\}$ and also that $P(X,\reals)$ consists of all characteristic set functions
(indicator functions) $\chi\sb{K}$ of clopen sets $K\in{\mathcal F}(X)$, whence
the Boolean algebra $P(X,\reals)$ is isomorphic to ${\mathcal F}(X)$ under the
mapping $\chi_K \mapsto K$.

A \emph{Stone space} is a compact Hausdorff space $X$ such that the clopen sets
in $\mathcal{F}(X)$ form a basis for the open sets in $X$. If $X$ is a Stone
space, and $x,y\in X$ with $x\not=y$, then there are disjoint clopen sets $K,L
\in{\mathcal F}(X)$ with $x\in K$ and $y\in L$; hence $\chi\sb{K}(x)=1$ and
$\chi\sb{X\setminus L}(y)=0$. Therefore distinct points in $X$ are separated
by continuous functions in $P(X,\reals)\subseteq C(X,\reals)$.

By the \emph{Stone representation theorem}, if ${\mathcal P}$ is a Boolean algebra,
there is a Stone space $X$, called \emph{the Stone space of ${\mathcal P}$} and
uniquely determined up to a homeomorphism, such that ${\mathcal P}$ is isomorphic
to the Boolean algebra ${\mathcal F}(X)$, whence ${\mathcal P}$ is isomorphic to $P(X,
\reals)$. As is well-known, ${\mathcal P}$ is a $\sigma$-complete Boolean algebra
(a Boolean $\sigma$-algebra) iff the Stone space $X$ of ${\mathcal P}$ is basically
disconnected, i.e., the closure of every open F$\sb{\sigma}$ subset of $X$ remains
open.

By \cite[Theorem 2, p. 150]{Crist} and Theorem \ref{lm:VecLat}, we have the following.

\begin{theorem} \label{th:cristTh2}
Let $(V,u)$ be a monotone $\sigma$-complete order unit normed vector lattice, let
$B$ be the Boolean $\sigma$-algebra of characteristic elements in $V$, and let
$X$ be the basically disconnected Stone space of $B$. Then{\rm: (i)} $V$ is Dedekind
$\sigma$-complete. {\rm(ii)} There is an isomorphism of order unit normed vector
lattices, $\Psi\colon V\to C(X,\reals)$, of $V$ onto $C(X,\reals)$ such that the
restriction $\psi$ of $\Psi$ to $B$ is a Boolean isomorphism of $B$ onto $P(X,
\reals)$ as per Stone's theorem.
\end{theorem}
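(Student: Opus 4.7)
Part (i) is an immediate application of Lemma \ref{lm:VecLat}, which equates monotone $\sigma$-completeness with Dedekind $\sigma$-completeness in any vector lattice. So I would record this in a single sentence and concentrate on part (ii).

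For part (ii), the plan is to build the isomorphism $\Psi$ in three stages: on characteristic elements, on simple elements, and finally on all of $V$ by a norm-completion argument. By the Stone representation theorem (already quoted in the excerpt) there is a Boolean isomorphism $\psi\colon B\to P(X,\reals)$, and one uses basic disconnectedness of $X$ to see $\psi$ carries countable suprema and infima in $B$ (which exist, since $V$ is monotone $\sigma$-complete and $B$ is a sub-effect-algebra and sublattice of $V$) to the pointwise countable suprema and infima in $P(X,\reals)$. Next, every simple element $s\in V$ admits, by the orthogonalization remark following Theorem \ref{th:Handelman}, a representation $s=\sum_{i=1}^{n}\alpha\sb{i}e\sb{i}$ with $e\sb{i}\in B$ pairwise orthogonal; setting $\Psi(s):=\sum_{i=1}^{n}\alpha\sb{i}\psi(e\sb{i})$ gives a well-defined linear, order-preserving, and lattice-preserving map from the set of simple elements of $V$ into $C(X,\reals)$. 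The well-definedness is the standard argument of passing to a common refinement of two orthogonal decompositions.

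The crux of the proof is the density statement: simple elements are order-unit norm dense in $V$. For this I would carry out a Freudenthal-type spectral argument inside $V$. Given $v\in V$ with $0\leq v\leq u$ and a natural number $n$, I construct for each $k\in\{0,1,\dots,n\}$ a characteristic element $e\sb{k,n}\in B$ representing ``the component of $v$ above $k/n$'' by using the Dedekind $\sigma$-completeness of $V$ to form suprema of bounded sequences built from $((nv-ku)\vee 0)\wedge u$ and then iterating to drive the auxiliary part to a true component of $u$; the resulting simple element $s\sb{n}:=\frac{1}{n}\sum_{k=1}^{n}e\sb{k,n}$ satisfies $|v-s\sb{n}|\leq (1/n)u$, so by Lemma \ref{lm:rhovsnorm}(i) we get $\|v-s\sb{n}\|\leq 1/n$. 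The case of general $v$ follows by writing $v=v\sp{+}-(-v)\sp{+}$ and rescaling by $\|v\|$. This is the step I expect to be the main obstacle, since producing genuine characteristic (\emph{i.e.}, component) elements inside $V$ instead of just effects requires the monotone $\sigma$-completeness hypothesis in an essential way and is the content of the ``spectral'' portion of Cristescu's development.

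Once density is in hand, the remaining steps are routine. Because $V$ is a Banach space in the order-unit norm by Theorem \ref{th:Handelman} and $C(X,\reals)$ is Banach in the uniform norm, and because $\Psi$ is norm-isometric on the dense subspace of simple elements (this follows from Lemma \ref{lm:rhovsnorm}(i) together with the fact that $\psi$ preserves the maximum on $B$), $\Psi$ extends uniquely to a linear isometry $\Psi\colon V\to C(X,\reals)$; continuity of lattice operations in both spaces (Lemma \ref{lm:rhovsnorm}(ii)) transfers the lattice, order, and effect-algebra structures across this extension. Surjectivity follows because in the Stone space $X$ the linear span of $P(X,\reals)$ is uniformly dense in $C(X,\reals)$ (combine the Stone--Weierstrass theorem with the fact that clopen sets form a basis), and every such simple function lies in $\Psi$ of a simple element of $V$. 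Injectivity is automatic from the isometry property. By construction $\Psi|\sb{B}=\psi$, completing the proof.
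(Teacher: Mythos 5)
The paper does not actually prove this theorem: it is stated as an immediate consequence of \cite[Theorem 2, p.~150]{Crist} together with Lemma \ref{lm:VecLat}, so your proposal supplies an argument where the paper defers entirely to Cristescu. What you outline is essentially the standard proof behind that citation (a Freudenthal-type spectral approximation by simple elements built from components of $u$, followed by an isometric extension and Stone--Weierstrass for surjectivity), and the overall architecture --- Stone isomorphism on $B$, well-definedness on simple elements via common orthogonal refinements, density of simple elements with the bound $|v-s\sb{n}|\leq\frac{1}{n}u$, isometric completion --- is sound; you correctly identify the density step as the one place where monotone $\sigma$-completeness is genuinely needed. One sub-claim, however, is wrong as stated and should be deleted or repaired: $\psi$ does \emph{not} carry countable suprema in $B$ to \emph{pointwise} suprema in $P(X,\reals)$. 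The supremum in ${\mathcal F}(X)$ of an ascending sequence of clopen sets $K\sb{n}$ is the closure $\left(\bigcup\sb{n}K\sb{n}\right)\sp{-}$, so $\bigvee\sb{n}\chi\sb{K\sb{n}}=\chi\sb{(\bigcup\sb{n}K\sb{n})\sp{-}}$, which differs from the pointwise supremum $\chi\sb{\bigcup\sb{n}K\sb{n}}$ on a meager set in general; indeed, this discrepancy is exactly the ``defect'' the paper highlights as the motivation for introducing gh-tribes and proving Theorem \ref{th:LSCGH}. Fortunately the error is not load-bearing: part (ii) only asks for an isomorphism of order unit normed vector lattices, and any lattice isomorphism automatically carries existing suprema in $V$ to suprema computed in $C(X,\reals)$, so your construction of $\Psi$ goes through once that remark is removed.
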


\section{Synaptic algebras and generalized Hermitian algebras} \label{sc:SAGHA}

The axioms SA1--SA8 for a synaptic algebra appear in the following
definition \cite[Definition 1.1]{Fsa}. To help fix ideas before we proceed,
we remark that the system $A={\mathcal{B}}\sp{sa}({\mathcal{\frak H}})$ of all
bounded self-adjoint linear operators on a Hilbert space ${\frak H}$ with the
algebra $R={\mathcal{B}}({\mathcal{\frak H}})$ of all bounded linear operators
on ${\frak H}$ as its enveloping algebra is a prototypic example of a
synaptic algebra.

\begin{definition} \label{df:SynapticAlgebra}
Let $R$ be a linear associative algebra over the real or complex numbers
with unity element $1$ and let $A$ be a real vector subspace of $R$.
If $a,b\in A$, we understand that the product $ab$, which may or may not
belong to $A$, is calculated in $R$, and we say that $a$ \emph{commutes
with} $b$, in symbols $aCb$, iff $ab=ba$. If $a\in A$ and $B\subseteq A$,
we define $C(a):=\{b\in A:aCb\},\ C(B):=\bigcap\sb{b\in B}C(b),\ CC(B):=
C(C(B)),\text{\ and\ } CC(a):=C(C(a))$. Of course, $B$ is said to be
\emph{commutative} iff $aCb$ for all $a,b\in B$, i.e., iff $B\subseteq C(B)$.

The vector space $A$ is a \emph{synaptic algebra} with \emph{enveloping
algebra} $R$ iff the following conditions are satisfied:
\begin{enumerate}
\item[SA1.] $(A,1)$ is an order unit normed space with positive cone $A\sp{+}
 :=\{a\in A:0\leq a\}$ and $\|\cdot\|$ is the corresponding order-unit norm
 on $A$.
\item[SA2.] If $a\in A$ then $a\sp{2}\in A\sp{+}$.
\item[SA3.] If $a,b\in A\sp{+}$, then $aba\in A\sp{+}$.
\item[SA4.] If $a\in A$ and $b\in A\sp{+}$, then $aba=0\Rightarrow
 ab=ba=0$.
\item[SA5.] If $a\in A\sp{+}$, there exists $b\in A\sp{+}\cap CC(a)$
 such that $b\sp{2}=a$.
\item[SA6.] If $a\in A$, there exists $p\in A$ such that $p=p\sp{2}$ and,
 for all $b\in A$, $ab=0\Leftrightarrow pb=0$.
\item[SA7.] If $1\leq a\in A$, there exists $b\in A$ such that $ab=ba=1$.
\item[SA8.] If $a,b\in A$, $a\sb{1}\leq a\sb{2}\leq a\sb{3}\leq\cdots$
 is an  ascending sequence of pairwise commuting elements of $C(b)$
 and $\lim\sb{n\rightarrow\infty}\|a-a\sb{n}\|=0$, then $a\in C(b)$.
\end{enumerate}
\end{definition}

For the remainder of this paper \emph{we assume that $A$ is a synaptic algebra}.
We shall also assume that $1\not=0$ (i.e., $A\not=\{0\}$), which enables us (as
usual) to identify each real number $\lambda\in\reals$ with $\lambda1\in A$. Thus,
for $a\in A$, $\|a\|=\inf\{0<\lambda\in\reals:-\lambda\leq a\leq\lambda\}$.
Limits in $A$ are understood to be limits with respect to the order-unit norm
$\|\cdot\|$. In the remainder of this section we comment briefly on some of the
basic consequences of SA1--SA8. See \cite{Fsa} for proofs and more details.

If $a\in A$, then by SA2, $a\sp{2}\in A\sp{+}\subseteq A$. Consequently, $A$ is
a special Jordan algebra with the Jordan product
\[
a\odot b:=\frac12(ab+ba)=\frac14[(a+b)\sp{2}-(a-b)\sp{2}]\in A\text{\ for all\ }
 a,b\in A.
\]
Let $a,b\in A$. Then $\|a\odot b\|\leq\|a\|\|b\|$. Also, if $aCb$, then $ab=ba
=a\odot b\in A$. Thus, since $a\sp{2}\in A$ and $a\sp{2}Ca$, we have $a\sp{3}
=a\sp{2}a\in A$, and by induction, $a\sp{n}\in A$ for all $n\in N$. Therefore,
if $q(t)=\alpha\sb{0}+\alpha\sb{1}t+\alpha\sb{2}t\sp{2}+\cdots+\alpha\sb{n}
t\sp{n}$ is a real polynomial, then $q(a):=\alpha\sb{0}+\alpha\sb{1}a+\alpha
\sb{2}a\sp{2}+\cdots+\alpha\sb{n}a\sp{n}\in A$, i.e., $A$ is closed under the
formation of real polynomials in its elements.

By a simple calculation, $aba=2a\odot(a\odot b)-a\sp{2}\odot b\in A$, and
the mapping $b\mapsto aba$, called the \emph{quadratic mapping} determined by
$a$, turns out to be linear and order preserving. In particular, $A$ satisfies
the condition $a\in A$, $b\in A\sp{+}\Rightarrow aba\in A\sp{+}$, which is
stronger than SA3. By SA4 with $b=1$, we have $a\sp{2}=0\Rightarrow a=0$.

Using SA5 and SA2, one can prove that, if $a,b\in A\sp{+}$ and $aCb$, then $ab
\in A\sp{+}$ \cite[Lemma 1.5]{Fsa}. Also, using SA5, and arguing as in \cite
[Theorem 2.2]{Fsa}, it can be shown that every $a\in A\sp{+}$, has a \emph
{square root} $a\sp{1/2}\in A\sp{+}$ which is uniquely determined by the condition
$(a\sp{1/2})\sp{2}=a$; moreover, $a\sp{1/2}\in CC(a)$.

Let $a\in A$. Then $a\sp{2}\in A\sp{+}$ by SA2, and the \emph{absolute
value} of $a$, defined by $|a|:=(a\sp{2})\sp{1/2}$, has the property that
$|a|\in CC(a)$. Moreover, $|a|$ is uniquely determined by the properties
$|a|\in A\sp{+}$ and $|a|\sp{2}=a\sp{2}$. Furthermore, using the
absolute value of $a$, we define the \emph{positive part} and the \emph
{negative part} of $a$ by $a\sp{+}:=\frac12(|a|+a)\in A\sp{+}\cap CC(a)$
and $a\sp{-}:=(-a)\sp{+}=\frac12(|a|-a)\in A\sp{+}\cap CC(a)$, respectively.
Then $a=a\sp{+}-a\sp{-}$, $|a|=a\sp{+}+a\sp{-}$, $a\sp{+}a\sp{-}=0=a\sp{-}
a\sp{+}$. (See Lemma \ref{lm:simple,etc} below.)

As in Section \ref{sc:Prelim}, we define $E:=A[0,1]=\{e\in A:0\leq e\leq
1\}$ and organize $E$ into a convex effect algebra. An important subset of
$E$ is the set $P:=\{p\in A: p\sp{2}=p\}$ of idempotents in $A$, which are
called \emph{projections}. By \cite[Theorem 2.6 (iii) and (v)]{Fsa} we have
the following.

\begin{lemma} \label{lm:Projections}
If $p\in E$, then the following conditions are mutually equivalent{\rm: (i)}
$p\in P$. {\rm(ii)} $p$ is an extreme point of $E$. {\rm(iii)} $p\wedge
\sb{E}(1-p)=0$.
\end{lemma}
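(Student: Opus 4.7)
The plan is to prove the cyclic chain (i)$\Rightarrow$(ii)$\Rightarrow$(iii)$\Rightarrow$(i), exploiting at each step the interplay between the quadratic mapping, the commutativity consequences of SA4, and the algebraic identity $1-p-p(1-p)=(1-p)\sp{2}$.

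For (i)$\Rightarrow$(ii), assume $p\sp{2}=p$ and suppose $p=\lambda e+(1-\lambda)f$ with $e,f\in E$ and $0<\lambda<1$. The key step is to apply the quadratic mapping $b\mapsto (1-p)b(1-p)$ (which preserves $A\sp{+}$) to this convex combination. Since $p(1-p)=p-p\sp{2}=0$, the left-hand side becomes $0$, giving a vanishing sum of two nonnegative terms $(1-p)e(1-p)$ and $(1-p)f(1-p)$, each of which must therefore be $0$. Invoking SA4 yields $(1-p)e=e(1-p)=0$, i.e.\ $ep=pe=e$, and similarly $fp=pf=f$. Then $p-e=p(1-e)$ is a product of commuting elements of $A\sp{+}$, hence lies in $A\sp{+}$ by \cite[Lemma 1.5]{Fsa}, giving $e\leq p$, and likewise $f\leq p$. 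Combined with $p=\lambda e+(1-\lambda)f$, this forces $e=f=p$, so $p$ is extreme.

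For (ii)$\Rightarrow$(iii), the argument is purely order-theoretic. Let $q\in E$ satisfy $0\leq q\leq p$ and $q\leq 1-p$. Then $p-q$ and $p+q$ both lie in $E$ (the upper bound for $p+q$ coming from $q\leq 1-p$), and $p=\tfrac12(p-q)+\tfrac12(p+q)$. Extremality forces $p-q=p+q=p$, whence $q=0$; this is exactly the statement $p\wedge\sb{E}(1-p)=0$.

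For (iii)$\Rightarrow$(i), the idea is to exhibit a natural candidate witness and show it must vanish. Set $q:=p(1-p)=p-p\sp{2}$. Since $p$ and $1-p$ commute and are both in $A\sp{+}$, \cite[Lemma 1.5]{Fsa} gives $q\in A\sp{+}$. Moreover $q\leq p$ is clear from $q=p-p\sp{2}\leq p$, and the algebraic identity $(1-p)-q=1-2p+p\sp{2}=(1-p)\sp{2}\in A\sp{+}$ (by SA2) gives $q\leq 1-p$. Hence $q\in E$ is a common lower bound of $p$ and $1-p$, so by hypothesis $q\leq p\wedge\sb{E}(1-p)=0$, forcing $p-p\sp{2}=0$, i.e.\ $p\in P$.

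The only mildly delicate point is (i)$\Rightarrow$(ii): one must remember that the quadratic mapping is linear and order preserving (noted in the paper right after SA4), and one must correctly invoke SA4 to upgrade the vanishing of $(1-p)e(1-p)$ to the commutation relation $ep=pe=e$. Everything else is routine manipulation in a synaptic algebra using the tools already assembled in Section~\ref{sc:SAGHA}.
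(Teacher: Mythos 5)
Your proof is correct, and it is worth noting that the paper itself supplies no argument here: Lemma \ref{lm:Projections} is simply cited from \cite[Theorem 2.6 (iii) and (v)]{Fsa}. You have therefore produced a self-contained proof from the material assembled in Section \ref{sc:SAGHA}, and every step checks out. In (i)$\Rightarrow$(ii) the use of the quadratic mapping $b\mapsto(1-p)b(1-p)$ is exactly the right tool: linearity and positivity give a vanishing sum of two elements of $A\sp{+}$, pointedness of the cone kills each summand, SA4 upgrades $(1-p)e(1-p)=0$ to $pe=ep=e$, and \cite[Lemma 1.5]{Fsa} applied to the commuting positive elements $p$ and $1-e$ yields $e\leq p$; the final step $\lambda(p-e)+(1-\lambda)(p-f)=0$ with both terms nonnegative forces $e=f=p$. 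The implication (ii)$\Rightarrow$(iii) is the standard order-theoretic argument ($p=\tfrac12(p-q)+\tfrac12(p+q)$ with both averaged elements in $E$ because $q\leq p$ and $q\leq 1-p$), and (iii)$\Rightarrow$(i) correctly exhibits $q=p-p\sp{2}$ as a common lower bound of $p$ and $1-p$ in $E$ via $p-q=p\sp{2}\in A\sp{+}$ and $(1-p)-q=(1-p)\sp{2}\in A\sp{+}$ (SA2), together with positivity of $q$ from \cite[Lemma 1.5]{Fsa}. Since the paper's reader must otherwise chase the external reference, your argument is a useful addition; the only cosmetic remark is that in (iii)$\Rightarrow$(i) you could note explicitly that $0$ is itself a lower bound, so ``$q\leq p\wedge\sb{E}(1-p)=0$'' really means ``the only common lower bound in $E$ is $0$, hence $q=0$,'' which is what you clearly intend.
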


With the partial order inherited from $A$ and the orthocomplementation
$p\mapsto p\sp{\perp}:=1-p$, $P$ is an orthomodular lattice (OML) \cite{K}
with smallest element $0$ and largest element $1$. Two projections $p,q\in P$
are said to be \emph{orthogonal}, in symbols $p\perp q$, iff $p+q\leq 1$, or
equivalently, iff $pq=qp=0$. Also, two elements $p,q$ in an OML are \emph
{{\rm(}Mackey{\rm)} compatible} \cite{Mac} iff $p=(p\wedge q)\vee(p\wedge q
\sp{\perp})$ (or equivalently, iff $q=(p\wedge q)\vee(p\sp{\perp}\wedge q)$).
It turns out that two projections $p$ and $q$ are compatible in $P$ iff they
are compatible in $E$ iff $pCq$. It is well-known that an OML is a Boolean
algebra iff its elements are pairwise compatible.

\begin{lemma} \label{lm:pCq}
Let $p,q\in P$ with $pCq$. Then{\rm: (i)} $pq=qp\in P$. {\rm(ii)} $pq\leq p,q$.
{\rm (iii)} $p\leq q\Leftrightarrow p=pq$. {\rm (iv)} $p\wedge q=pq$.
\end{lemma}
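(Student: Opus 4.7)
My plan is to prove the four parts in the order stated, reusing each part in the next, with the commutativity hypothesis $pCq$ and the characterization of order via $0$-products from SA4 doing almost all of the work.

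For (i), I would simply observe that $pq=qp$ is the definition of $pCq$, and then compute $(pq)^2 = p(qp)q = p(pq)q = p^2q^2 = pq$, so $pq$ is an idempotent, hence a projection. For (ii), the key is to rewrite $p-pq = p(1-q)$. Since $pCq$ implies $pC(1-q)$ and both $p$ and $1-q$ lie in $P$, part (i) applied to the pair $p,1-q$ gives $p(1-q)\in P\subseteq A^{+}$, so $pq\leq p$; the inequality $pq\leq q$ follows by the symmetric computation $q-pq=(1-p)q$.

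For (iii), the easy direction is immediate from (ii): if $p=pq$, then $p=pq\leq q$. For the reverse direction, assume $p\leq q$. The quadratic map $b\mapsto(1-q)b(1-q)$ is order preserving, so from $p\leq q$ I get $0\leq(1-q)p(1-q)\leq(1-q)q(1-q)=q(1-q)-q^{2}(1-q)=0$, whence $(1-q)p(1-q)=0$. Applying SA4 with $a=1-q\in A$ and $b=p\in A^{+}$ yields $(1-q)p=p(1-q)=0$, i.e., $p=pq$.

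For (iv), parts (i) and (ii) show that $pq\in P$ is a common lower bound of $p$ and $q$. To see it is the greatest such bound in $P$, suppose $r\in P$ with $r\leq p$ and $r\leq q$. Exactly the argument given in (iii), applied to the pairs $r\leq p$ and $r\leq q$, shows $r=rp=pr$ and $r=rq=qr$ (using SA4 to get both left and right equalities from $(1-p)r(1-p)=0$ and $(1-q)r(1-q)=0$). Then $r(pq)=(rp)q=rq=r$ and $(pq)r=p(qr)=pr=r$, so $rC(pq)$ and $r=r(pq)$; by (iii) applied to the commuting projections $r$ and $pq$, this gives $r\leq pq$. Hence $pq=p\wedge q$. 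I do not expect a genuine obstacle: the only subtle step is making sure to invoke SA4 with the correct sign choice for $a$ in (iii), and then seeing that this same trick, applied twice, also delivers (iv).
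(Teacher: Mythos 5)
Your proof is correct, and its overall architecture matches the paper's: (i) by squaring, (ii) by showing $p(1-q)\geq 0$, (iii) with the easy direction from (ii), and (iv) by exhibiting $pq$ as the greatest lower bound. The differences are in the supporting steps, and they are worth noting. For (ii) the paper invokes the general fact that commuting positive elements have a positive product ($0\leq p,\,1-q$ and $pC(1-q)$ give $0\leq p(1-q)$), whereas you get positivity from (i) together with $P\subseteq A^{+}$ (via SA2); both work. For the forward direction of (iii) the paper argues from $0\leq (q-p)(1-q)=pq-p$, again using commutativity of positives, while you use the order-preserving quadratic map $b\mapsto (1-q)b(1-q)$ plus SA4. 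Your route has a genuine payoff: it proves that $p\leq q$ forces $pq=qp=p$ \emph{without} assuming $pCq$ in advance. This matters in (iv), where both proofs apply (iii) to the pairs $r\leq p$ and $r\leq q$; the paper's version of (iii) presupposes commutativity of the pair, so its step ``$r=rp=rq$ by (iii)'' tacitly relies on the standard fact that comparable projections commute, whereas your SA4 argument derives $r=rp=pr$ and $r=rq=qr$ directly and then verifies $rC(pq)$ explicitly before concluding $r\leq pq$. So your write-up is, if anything, slightly more self-contained at exactly the point where the paper's proof leans on an unstated fact.
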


\begin{proof}
(i) We have $p=p\sp{2}$, $q=q\sp{2}$ and $pCq$ whence $(pq)\sp{2}=pqpq
=p\sp{2}q\sp{2}=pq$, and therefore $pq=qp\in P$. (ii) Since $0\leq p,
1-q$ and $pC(1-q)$ it follows that $0\leq p(1-q)=p-pq$, so $pq\leq p$,
and similarly $pq\leq q$. (iii) Suppose that $p\leq q$. Then as $(q-p)C(1-q)$,
and $0\leq q-p, 1-q$, it follows that $0\leq(q-p)(1-q)=pq-p$, so $p\leq pq$;
hence $p=pq$ by (ii). Conversely, by (ii), if $p=pq$, then $p\leq q$, proving
(iii). (iv) By (ii), $pq$ is a lower bound in $P$ for $p$ and $q$. Suppose
$r\in P$ and $r\leq p,q$. Then $r=rp=rq$ by (iii), so $r(pq)=(rp)q=rq=r$,
whence $r\leq pq$, proving (iv).
\end{proof}

An element in $A$ is called \emph{simple} iff it is a finite linear
combination of pairwise commuting projections. Every simple element in $A$
can be written as a finite linear combination of pairwise orthogonal
projections. It turns out that each element $a\in A$ is the norm limit and
also the supremum of an ascending sequence of pairwise commuting simple
elements \cite[Corollary 8.6 and Theorem 8.9]{Fsa}.

Using SA6, one can prove that if $a\in A$, there exists a unique projection,
denoted by $a\dg\in P$ and called the \emph {carrier} of $a$, such that,
for all $b\in A$, $ab=0\Leftrightarrow a\dg b=0$. (Some authors refer to
$a\dg$ as the \emph{support} of $a$.) It turns out that $ab=0\Leftrightarrow
a\dg b\dg=0\Leftrightarrow b\dg a\dg=0\Leftrightarrow ba=0$. By \cite
[Theorem 2.10]{Fsa}, $a\dg\in CC(a)$, $|a|\dg=a\dg$, and $(a^n)\dg=
a\dg$ for all $n\in\Nat$.

An element $a\in A$ is \emph{invertible} iff there is a (necessarily
unique) element $a\sp{-1}\in A$ such that $aa\sp{-1}=a\sp{-1}a=1$. Using
SA7, one can prove  that $a\in A$ is invertible iff there exists $0<
\epsilon\in\reals$ such that $\epsilon\leq|a|$ \cite[Theorem 7.2]{Fsa}.

In the presence of the remaining axioms, SA8 is equivalent to the
condition that $C(a)$ is norm closed for all $a\in A$ \cite[Theorem 8.11]
{Fsa}.

An element $s\in A$ such that $s\sp{2}=1$ is called a \emph{symmetry}
\cite{FPSym}. Symmetries are in one-to-one correspondence with projections
as follows: If $s$ is a symmetry, then $p=\frac12(s+1)$ is a projection,
and if $p$ is a projection, then $s=2p-1$ is a symmetry. If $s\in A$ is a
symmetry, then $|s|=\|s\|=1$.

A subset $B$ of $A$ is a \emph{sub-synaptic algebra} iff it is a linear
subspace of $A$, $1\in B$, and $B$ is closed under the formation of squares,
square roots, carriers, and inverses. If $B$ is a sub-synaptic algebra of $A$,
then $B$ is a synaptic algebra in its own right under the restrictions to $B$
of the partial order and operations on $A$.  For instance, the commutant $C(B)$
of any subset $B$ of $A$ is a sub-synaptic algebra of $A$.

If $B$ is a commutative subset of $A$, then so is the double commutant $CC(B)$;
moreover, $B\subseteq CC(B)$ and $CC(B)$ is a commutative sub-synaptic algebra of
$A$. In particular, if $a\in A$, then the singleton set $\{a\}$ is commutative so
$a$ belongs to the commutative sub-synaptic algebra $CC(a)$ of $A$. Moreover, if
$b\in CC(a)$, then $b\in CC(b)\subseteq CC(a)$.

By \cite[Section 8]{Fsa}, each element $a$ in a synaptic algebra $A$ both determines
and is determined by a corresponding \emph{spectral resolution}, that is, the ascending
family $(p_{a,\lambda})_{\lambda\in\reals}$ of projections in $P\cap CC(a)$, given by
$p_{a,\lambda}:=1-((a-\lambda)^+)\dg$ for all $\lambda\in\reals$. Moreover, $a=\int_
{L_a-0}^{U_a}\lambda dp_{a,\lambda}$, where the Riemann-Stiltjes type integral converges
in norm, $-\infty<L_a:=\sup\{\lambda\in\reals:p_{a,\lambda=0}\}$, and $U_a:=\inf
\{\lambda\in\reals:p_{a,\lambda}=1\}<\infty$.

The \emph{resolvent set} of $a\in A$ is defined as the set of all real numbers $\mu$ such
that there exists an open interval $I\ni\mu$ in $\mathbb R$ such that $p_{a,\lambda}=p_{a,\mu}$
for all $\lambda \in I$. The \emph{spectrum} of $A$, in symbols $spec(a)$, is defined to be the
complement in $\mathbb R$ of the resolvent set. By \cite[Lemma 3.1]{FPproj}, $\lambda \in
spec(a)$ iff the element $a-\lambda$ is not invertible in $A$. By \cite[Theorem 4.3]{FPsr},
$spec(a)$ is a closed nonempty subset of the closed interval $[L_a,U_a]\subseteq {\mathbb R}$,
$U_a=\sup(spec(a))\in spec(a)$, $L_a=\inf(spec(a))\in spec(a)$. Thus, $spec(a)$ is a nonempty
compact subset of $\mathbb R$. Moreover, $\|a\|=\sup\{|\alpha |:\alpha\in spec(a)\}$.

Also, by \cite[Theorem 8.10]{Fsa}, if $b\in A$ then $bCa$ iff $bCp_{a,\lambda}$  for all
$\lambda\in\reals$, so that two elements in $A$ commute iff the projections in their
respective spectral resolutions commute. If $B$ is a sub-synaptic algebra of $A$, then
the spectral resolution, hence also the spectrum, of each $b\in B$ is the same whether
calculated in $A$ or in $B$.

\begin{definition} \label{df:CVProps}
The synaptic algebra $A$ has the \emph{commutative Vigier} (CV) property iff,
for every bounded ascending sequence $(a\sb{n})\sb{n=1}\sp{\infty}$ of pairwise
commuting elements in $A$, there exists $a\in CC(\{a\sb{n}:n\in\Nat\})$ with
$a\sb{n}\nearrow a$.
\end{definition}

As a consequence of \cite[Lemma 6.6]{FPgh} condition CV is stronger than
axiom SA8. We note that, if $A$ is commutative, then $A$ satisfies condition
CV iff $A$ is monotone $\sigma$-complete, in which case $A$ is a Banach
algebra (Theorem \ref{th:Handelman}).

\begin{remarks} \label{rm:GH}
In view of the discussion in \cite[\S 6]{Fsa}, \emph{in what follows, we can
and shall regard a GH-algebra as a synaptic algebra in which axiom SA8 is
replaced by the stronger CV condition.} By \cite[Lemma 5.4]{FPgh}, if $A$ is
a GH-algebra, then $P$ is a $\sigma$-complete OML.
\end{remarks}

We note that ${\mathcal B}\sp{sa}({\mathfrak H})$ is a GH-algebra. Every
synaptic algebra of finite rank (meaning that there exists $n\in\Nat$ such
that there are $n$, but not $n+1$ pairwise orthogonal nonzero projections
in $P$) is a GH-algebra. According to \cite{FPspin}, a synaptic algebra of
rank 2 is the same thing as a spin factor of dimension greater than $1$.
Thus, GH-algebras of finite rank need not be finite dimensional. Additional
examples of GH-algebras can be found in \cite{Fsa, FPreg}.

Condition (4) in the following definition is an enhancement of
\cite[Definition 2.9]{FPproj} but this does not affect the definition
of a synaptic isomorphism.

\begin{definition} \label{df:morphisms}
Let $A\sb{1}$ and $A\sb{2}$ be
synaptic algebras. A linear mapping $\phi\colon A\sb{1}\to A\sb{2}$
is a \emph{synaptic morphism} iff, for all $a,b\in A\sb{1}$:
\[
\begin{array}{ll}
 {\rm(1)}\  \phi(1)=1. & {\rm(2)}\ \phi(a\sp{2})=\phi(a)\sp{2}. \\
 {\rm(3)}\ aCb\Rightarrow\phi(a)C\phi(b). & {\rm(4)}\ \phi(a\dg)=
 \phi(a)\dg.
\end{array}
\]

\noindent A synaptic morphism $\phi$ is a \emph{synaptic isomorphism}
iff it is a bijection and $\phi\sp{-1}$ is also a synaptic morphism.
If $A\sb{1}$ and $A\sb{2}$ are GH-algebras, then a \emph{GH-algebra
morphism} $\phi\colon A\sb{1}\to A\sb{2}$ is a synaptic morphism such
that, for every ascending sequence of pairwise commuting elements
$(a\sb{n})\sb{n=1}\sp{\infty}$ in $A$,
\[
{\rm(5)}\ a\sb{n}\nearrow a\in A\Rightarrow \phi(a\sb{n})\nearrow
 \phi(a).
\]
A GH-algebra morphism is a \emph{GH-algebra isomorphism} iff it is
a bijection and its inverse is also a GH-algebra morphism.
\end{definition}

\begin{remark} \label{rm:GHisomorphism}
Since a synaptic isomorphism is necessarily an order isomorphism,
if $A\sb{1}$ is a GH-algebra, $A\sb{2}$ is a synaptic algebra, and
there is a synaptic isomorphism $\phi$ from $A\sb{1}$ onto $A\sb{2}$,
then $A\sb{2}$ is a GH-algebra and $\phi$ is a GH-isomorphism.
\end{remark}

\section{Commutative synaptic and GH-algebras}

The commutative sub-synaptic algebra $C(A)$ is called the \emph{center}
of $A$, and $A$ itself is commutative iff $A=C(A)$. By \cite[Theorem 5.12]
{FJPvect}, we have the following characterization of commutative synaptic
algebras.

\begin{theorem}\label{th:commut}
The following conditions are mutually equivalent:
{\rm(i)} The synaptic algebra $A$ is commutative. {\rm(ii)} $A$ is lattice
ordered, hence an order unit normed vector lattice. {\rm(iii)} $E$ is an
MV-effect algebra. {\rm (iv)} $P$ is a Boolean algebra. Moreover, every
Boolean algebra can be realized as the Boolean algebra of projections in
a commutative synaptic algebra.
\end{theorem}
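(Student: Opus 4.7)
The plan is to establish the cycle (i) $\Rightarrow$ (ii) $\Rightarrow$ (iii) $\Rightarrow$ (iv) $\Rightarrow$ (i) and then exhibit, for an arbitrary Boolean algebra $B$, a commutative synaptic algebra whose projection lattice is isomorphic to $B$. For (i) $\Rightarrow$ (ii) I would use the absolute value available in every synaptic algebra to define, for $a,b\in A$, the candidates $a\vee b := \tfrac{1}{2}(a+b+|a-b|)$ and $a\wedge b := \tfrac{1}{2}(a+b-|a-b|)$. Rewriting $a\vee b = b+(a-b)\sp{+}$ and $a\wedge b = b-(a-b)\sp{-}$, the inequalities $a\vee b\geq a,b$ follow at once from $(a-b)\sp{\pm}\in A\sp{+}$ and $(a-b) = (a-b)\sp{+}-(a-b)\sp{-}$. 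The bulk of the work is the least-upper-bound property: if $c\geq a,b$, one must establish $c-b\geq (a-b)\sp{+}$, for which commutativity is essential because it allows one to pass inside $CC(a-b,c)$ and exploit $(a-b)\sp{+}(a-b)\sp{-}=0$ together with the fact that a positive element multiplied by a commuting positive element is positive.

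For (ii) $\Rightarrow$ (iii), observe that $E=A[0,1]$ is a sublattice of $A$ (since $0\leq e\wedge f\leq e\vee f\leq 1$ for $e,f\in E$), so it is a lattice effect algebra; since $E$ inherits the convexity of $A$ and (as recalled in Section~\ref{sc:Prelim}) a convex lattice effect algebra is an MV-effect algebra, $E$ is MV. For (iii) $\Rightarrow$ (iv), the defining property of an MV-effect algebra is that every two elements are compatible; restricting this to $P\subseteq E$ and using the identification of compatibility in $P$, compatibility in $E$, and $pCq$ from Section~\ref{sc:SAGHA}, all projections pairwise commute, so the OML $P$ is Boolean. For (iv) $\Rightarrow$ (i), I would invoke the consequence of \cite[Theorem 8.10]{Fsa} quoted in the excerpt: $aCb$ iff the projections in the spectral resolutions of $a$ and $b$ pairwise commute. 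Since $P$ is Boolean, all such projections commute, so $A$ is commutative. The hard step throughout the cycle is (i) $\Rightarrow$ (ii); the other three are essentially bookkeeping using results already assembled in the preliminaries.

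For the \emph{moreover} clause, given a Boolean algebra $B$ with Stone space $X$, I would take $S\subseteq C(X,\reals)$ to be the linear span of $\{\chi\sb{K}:K\in{\mathcal F}(X)\}$, i.e., the simple functions with clopen level sets. Writing any $f\in S$ as $\sum\sb{i=1}\sp{n}\alpha\sb{i}\chi\sb{K\sb{i}}$ with pairwise disjoint $K\sb{i}\in{\mathcal F}(X)$, one verifies directly that $S$ is closed under sums, products, squares, absolute values, square roots ($f\sp{1/2}=\sum\alpha\sb{i}\sp{1/2}\chi\sb{K\sb{i}}$ when $f\geq 0$), carriers ($f\dg=\sum\sb{\alpha\sb{i}\neq 0}\chi\sb{K\sb{i}}\in S$), and inverses when $f\geq 1$. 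With $1=\chi\sb{X}$ as order unit and the sup norm as the order-unit norm, the axioms SA1--SA7 hold by pointwise computation, and SA8 is trivial since $S$ is commutative. Hence $S$ is a commutative synaptic algebra whose projections are $P(X,\reals)\cong{\mathcal F}(X)\cong B$, as required.
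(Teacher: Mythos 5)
Your argument is correct, but note that the paper does not actually prove Theorem \ref{th:commut} at all: it simply imports the statement from \cite[Theorem 5.12]{FJPvect}, so what you have written is a self-contained reconstruction rather than a variant of an in-text proof. Your cycle (i)$\Rightarrow$(ii)$\Rightarrow$(iii)$\Rightarrow$(iv)$\Rightarrow$(i) leans exactly on the ingredients the paper itself assembles: the formulas $a\vee b=\frac12(a+b+|a-b|)$, $a\wedge b=\frac12(a+b-|a-b|)$ are precisely those recorded in Lemma \ref{lm:simple,etc}(ii) (there again only cited from \cite{FJPvect}), the equivalence ``convex lattice effect algebra $=$ MV-effect algebra'' from \cite{BBGP} handles (ii)$\Rightarrow$(iii), the chain ``compatible in $E$ iff compatible in $P$ iff $pCq$'' plus ``an OML is Boolean iff pairwise compatible'' handles (iii)$\Rightarrow$(iv), and \cite[Theorem 8.10]{Fsa} handles (iv)$\Rightarrow$(i). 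The only step you leave genuinely compressed is the least-upper-bound property in (i)$\Rightarrow$(ii); it does go through along the lines you indicate: with $d:=a-b$, $e:=c-b$ and $p:=(d\sp{+})\dg$, the carrier identities give $pd=d\sp{+}$ (since $d\sp{-}p=0$ follows from $d\sp{+}d\sp{-}=0$), and then positivity of products of commuting positive elements yields $e=pe+(1-p)e\geq p(e-d)+pd\geq d\sp{+}$, which is exactly the inequality you need. Your construction for the ``moreover'' clause --- the span $S$ of $\{\chi\sb{K}:K\in{\mathcal F}(X)\}$ inside $C(X,\reals)$, with SA1--SA7 checked pointwise on a disjoint clopen decomposition and SA8 vacuous by commutativity --- is a clean and verifiable witness; the paper gives no construction of its own, so this adds genuine content. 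The one thing your write-up buys over the paper is transparency; what it costs is that the heavy lifting in (i)$\Rightarrow$(ii) is still only sketched, and a referee would want the carrier-projection computation displayed explicitly.
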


\begin{lemma} \label{lm:simple,etc}
Let $A$ be commutative. Then{\rm: (i)} The vector-lattice notions of absolute
value and positive part coincide with the corresponding notions as defined for
a synaptic algebra. {\rm(ii)} For $a,b\in A$, $a\wedge b=\frac12(a+b-|a-b|)$
and $a\vee b=\frac12(a+b+|a-b|)$. {\rm(iii)} $P$ is precisely the set of
characteristic elements in $A$. {\rm(iv)} The simple elements $a\in A$ in the
sense of an order unit normed vector lattice are precisely the simple elements
in $A$ as defined for a synaptic algebra.
\end{lemma}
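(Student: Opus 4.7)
The plan is to prove (i)--(iv) in order; by Theorem \ref{th:commut}, $A$ is an order-unit-normed vector lattice, so Dedekind's law and standard vector-lattice identities are available throughout. Part (ii) is then pure vector-lattice bookkeeping: Dedekind's law gives $a\vee b+a\wedge b=a+b$, and the identity $a\vee b-a\wedge b=|a-b|$ (from translation invariance together with $(a-b)^{+}+(b-a)^{+}=|a-b|$) produces the stated formulas by addition and subtraction.

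For (i), the key preliminary step is to show that \emph{if $x,y\in A^{+}$ and $xy=0$, then $x\wedge y=0$}. Set $z:=x\wedge y\in A^{+}$, so $0\le z\le y$. The quadratic map $b\mapsto xbx$ is order-preserving on $A^{+}$, giving $0\le xzx\le xyx$; by commutativity $xyx=(xy)x=0$, so SA4 forces $xz=zx=0$. Since $z$ and $x-z$ are commuting elements of $A^{+}$, their product lies in $A^{+}$ (by the commuting-positives lemma stated just after SA5), i.e.\ $0\le z(x-z)=-z^{2}$; combined with $z^{2}\in A^{+}$ (SA2), this gives $z^{2}=0$, whence $z=0$ by SA4. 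Applying this to $x=a^{+}$, $y=a^{-}$ yields $a^{+}\wedge a^{-}=0$. A short Dedekind-law argument then shows that a vector-lattice decomposition $a=u-v$ with $u,v\ge 0$ and $u\wedge v=0$ is unique (from $u_{1}+v_{2}=u_{2}+v_{1}$, $\vee$ both sides with $v_{1}+v_{2}$, apply translation invariance and $u_{i}\vee v_{i}=u_{i}+v_{i}$ to collapse to $u_{1}=u_{2}$). Hence the synaptic $a^{\pm}$ coincide with the vector-lattice $a^{\pm}$, and consequently $|a|_{\mathrm{syn}}=a^{+}+a^{-}=|a|_{\mathrm{vl}}$.

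For (iii), by Lemma \ref{lm:Projections}, $p\in P$ iff $p\in E$ and $p\wedge_{E}(1-p)=0$. Because $E$ carries the order inherited from the lattice $A$, the meet in $E$ (when it exists) agrees with the meet in $A$; so the condition is $p\wedge_{A}(1-p)=0$, which is exactly the defining property of a characteristic element (noting that $e\wedge_{A}(1-e)=0$ in the lattice $A$ automatically forces $0\le e\le 1$). Part (iv) is immediate from (iii): both definitions describe finite real linear combinations of the same underlying set (characteristic elements $=$ projections), and the pairwise-commutativity condition in the synaptic definition is automatic in the commutative algebra $A$.

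The main obstacle is the preliminary claim in (i) that converts the algebraic identity $xy=0$ into the order-theoretic identity $x\wedge y=0$; once that is in hand, uniqueness of the vector-lattice $\pm$-decomposition finishes (i), and (ii)--(iv) reduce to bookkeeping with Lemma \ref{lm:Projections} and standard vector-lattice identities.
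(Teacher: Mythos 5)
Your proof is correct, but it takes a genuinely more self-contained route than the paper's. For parts (i) and (ii) the paper simply cites external results from the paper on vector lattices in synaptic algebras (Remarks 5.5, \S 4 and Corollary 5.13 of that reference), whereas you derive them from the axioms: your key observation that $x,y\in A^{+}$ with $xy=0$ forces $x\wedge y=0$ (via order-preservation of the quadratic map, SA4, and the fact that a product of commuting positive elements is positive), combined with $a^{+}a^{-}=0$ and the uniqueness of the orthogonal decomposition in a vector lattice, is a clean internal proof of (i) that the paper does not spell out. What the paper's citation buys is brevity; what your argument buys is a proof readable without the companion paper.

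For (iii) the two arguments diverge on the inclusion of $P$ into the set of characteristic elements: the paper computes $p\wedge_{A}(1-p)=\tfrac12\bigl(1-|2p-1|\bigr)=0$ directly from (ii), using that $2p-1$ is a symmetry with $|2p-1|=1$, while you pass from $p\wedge_{E}(1-p)=0$ to $p\wedge_{A}(1-p)=0$. Your justification of that passage --- that the meet in $E$ automatically agrees with the meet in $A$ --- is not a valid general principle for subposets, since a lower bound of $p$ and $1-p$ in $A$ need not lie in $E$. The instance you need is easily repaired: if $c\le p$ and $c\le 1-p$ in $A$, then $c^{+}=c\vee 0$ lies in $E$ and is also a lower bound, so $c\le c^{+}\le p\wedge_{E}(1-p)=0$. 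With that one-line patch everything goes through; the paper's symmetry computation sidesteps the issue entirely. Parts (ii) and (iv) are handled essentially as in the paper.
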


\begin{proof}
Part (i) follows from \cite[Remarks 5.5]{FJPvect} and (ii) is a consequence of
\cite[\S 4 and Corollary 5.13]{FJPvect}. To prove (iii), we note that if $p$ is
a characteristic element in $A$, then $p\wedge\sb{A}(1-p)=0$, whence $p\wedge\sb{E}
(1-p)=0$, so $p\in P$ by Lemma \ref{lm:Projections}. Conversely, suppose that
$p\in P$. Then by (ii), $p\wedge\sb{A}(1-p)=\frac12(p+1-p-|p-(1-p)|)=\frac12
(1-|2p-1|)$. But, $2p-1$ is a symmetry, whence $|2p-1|=1$, so $p\wedge\sb{A}
(1-p)=0$, proving (iii). Part (iv) follows from (iii).
\end{proof}

From the results in Section \ref{sc:SAGHA}, Theorem \ref{th:commut}, and Lemma
\ref{lm:simple,etc} we have the following: If $A$ is a commutative synaptic
algebra, then $A$ is a commutative, associative, partially ordered archimedean
real linear algebra with a unity element $1$; it is an order unit normed vector
lattice with order unit $1$; it is a normed linear algebra under the order-unit
norm; it can be considered as its own enveloping algebra; the characteristic
elements in $A$ coincide with the projections in the Boolean algebra $P$; and
the set of simple elements in $A$ is a norm dense commutative sub-synaptic algebra
of $A$.

If $A$ is commutative, then the CV condition reduces to monotone
$\sigma$-completeness, and in view of Remarks \ref{rm:GH}, we have the
following.

\begin{theorem} \label{th:CGH}
A commutative GH-algebra is the same thing as a commutative associative
algebra over $\reals$ that satisfies SA1--SA7 and is monotone $\sigma$-complete.
In particular, a commutative GH-algebra is a Banach algebra.
\end{theorem}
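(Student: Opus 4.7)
The plan is to prove both directions of the equivalence, observing that in a commutative setting the CV condition collapses to ordinary monotone $\sigma$-completeness because the double commutant is trivial and every pair of elements commutes.

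For the forward implication, I would start with a commutative GH-algebra $A$. By Remarks \ref{rm:GH}, $A$ is a synaptic algebra, so SA1--SA7 hold, and the paragraph following Lemma \ref{lm:simple,etc} already records that a commutative synaptic algebra is a commutative, associative, real linear algebra that may be taken as its own enveloping algebra. To obtain monotone $\sigma$-completeness, I would apply CV to a bounded ascending sequence $(a_n)$: commutativity of the pairs is automatic, so CV supplies an element $a \in CC(\{a_n : n \in \Nat\}) \subseteq A$ with $a_n \nearrow a$. The dual statement for bounded descending sequences follows from the involution $a \mapsto -a$ on $A$ via Remarks \ref{rm:involution}.

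For the backward implication, suppose $A$ is a commutative associative algebra over $\reals$ satisfying SA1--SA7 and monotone $\sigma$-complete. I would take $A$ itself as enveloping algebra, so $aCb$ for all $a,b \in A$ and $C(b) = CC(B) = A$ for every $b \in A$ and $B \subseteq A$. Then SA8 is vacuous (the norm limit of any sequence in $A$ automatically lies in $C(b) = A$), hence $A$ is a synaptic algebra. To verify CV, take any bounded ascending sequence $(a_n)$ of (automatically) pairwise commuting elements; monotone $\sigma$-completeness yields $a \in A$ with $a_n \nearrow a$, and membership $a \in CC(\{a_n : n \in \Nat\}) = A$ is trivial. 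Thus $A$ satisfies CV and is a GH-algebra.

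For the final clause, I would observe that the paragraph after Lemma \ref{lm:simple,etc} already records that a commutative synaptic algebra is a normed linear algebra under the order-unit norm, which in the commutative case comes from $ab = a \odot b$ together with $\|a \odot b\| \leq \|a\|\,\|b\|$ noted in Section \ref{sc:SAGHA}. Combining this with Theorem \ref{th:Handelman} applied to the monotone $\sigma$-complete order unit normed space $(A,1)$ gives completeness in the order-unit norm, so $A$ is a Banach algebra. The argument is essentially bookkeeping; the only conceptual point, and the one I would emphasize, is that in the commutative case both the commutation and double-commutant conditions in CV become vacuous, which is exactly why CV degenerates to monotone $\sigma$-completeness.
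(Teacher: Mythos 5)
Your proposal is correct and follows essentially the same route as the paper, which derives the theorem directly from the observation that in the commutative case (with $A$ as its own enveloping algebra) the commutation and double-commutant requirements in CV are vacuous, so CV reduces to monotone $\sigma$-completeness, with the Banach algebra claim coming from Theorem \ref{th:Handelman} and submultiplicativity of the order-unit norm. Your write-up merely makes explicit the two directions and the vacuity of SA8 that the paper leaves implicit.
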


Let $X$ be a compact Hausdorff space. As observed in \cite{FPproj}, $C(X,\reals)$
satisfies all of the synaptic algebra axioms, with the possible exception of the
existence of carriers (i.e., SA6). In fact, by \cite[Theorem 6.3]{FJPstat},
$C(X,\reals)$ is a synaptic algebra iff $X$ is basically disconnected and the
latter condition is equivalent to the condition that $C(X,\reals)$ is monotone
$\sigma$-complete. As we observed above, if a synaptic algebra is commutative,
then it satisfies the CV property (i.e., it is a GH-algebra) iff it is monotone
$\sigma$-complete. Therefore, $C(X,\reals)$ is a synaptic algebra iff
it is a (commutative) GH-algebra.

For a commutative GH-algebra we have the following functional representation
theorem (\cite[Theorem 5.9]{FPproj}, \cite[Theorem 6.5]{FJPstat}). (See \cite
[Theorem 4.1]{FPproj} for a more general functional representation of a
commutative synaptic algebra.)

\begin{theorem}\label{th:ghcom}
Suppose that $A$ is a commutative GH-algebra and let $X$ be the basically
disconnected Stone space of the $\sigma$-complete Boolean algebra $P$. Then
$C(X,\reals)$ is a commutative GH-algebra and there exists a GH-isomorphism
$\Psi:A\to C(X,\reals)$ such that the restriction $\psi$ of $\Psi$ to $P$ is
a Boolean isomorphism of $P$ onto $P(X,{\mathbb R})$ as per Stone's theorem.
\end{theorem}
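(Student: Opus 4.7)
The plan is to realize this theorem as essentially an application of Theorem \ref{th:cristTh2}, with the additional work being to upgrade the vector-lattice isomorphism produced there into a full GH-isomorphism. First, I would check that $C(X,\reals)$ is itself a commutative GH-algebra: by Theorem \ref{th:commut}(iv) together with Remarks \ref{rm:GH}, $P$ is a $\sigma$-complete Boolean algebra; hence its Stone space $X$ is basically disconnected, so by the discussion immediately preceding the theorem, $C(X,\reals)$ is a monotone $\sigma$-complete, commutative synaptic algebra, and then Theorem \ref{th:CGH} gives that it is a commutative GH-algebra.

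Next, I would invoke Theorem \ref{th:cristTh2}. By Theorem \ref{th:commut} and Theorem \ref{th:CGH}, $A$ is a monotone $\sigma$-complete order unit normed vector lattice, and by Lemma \ref{lm:simple,etc}(iii) its Boolean $\sigma$-algebra of characteristic elements coincides with $P$. Theorem \ref{th:cristTh2} therefore provides an isomorphism $\Psi\colon A\to C(X,\reals)$ of order unit normed vector lattices whose restriction $\psi$ to $P$ is the desired Boolean isomorphism onto $P(X,\reals)$. In particular, $\Psi$ is an isometry, it preserves $1$, and it carries ascending sequences to ascending sequences with correctly transferred suprema, so condition (5) of Definition \ref{df:morphisms} is immediate and condition (3) is trivial since $C(X,\reals)$ is commutative.

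It remains to verify conditions (2) and (4) of Definition \ref{df:morphisms}. For (2), $\Psi(a^2)=\Psi(a)^2$, I would first handle simple elements: writing a simple $a=\sum_{i=1}^n\alpha_ip_i$ with pairwise orthogonal $p_i\in P$ (as provided in Section \ref{sc:SAGHA}), both $\Psi(a^2)$ and $\Psi(a)^2$ equal $\sum_{i=1}^n\alpha_i^2\chi_{\psi(p_i)}$, because $\psi$ maps the pairwise orthogonal $p_i$ to characteristic functions of pairwise disjoint clopen sets. Density of simple elements in norm together with the normed-algebra inequality $\|a_n^2-a^2\|\leq\|a_n-a\|(\|a_n\|+\|a\|)$ and the fact that $\Psi$ is an isometry then promote this to all $a\in A$. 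Once (2) is known, the commutative Jordan identity $ab=\tfrac14\bigl[(a+b)^2-(a-b)^2\bigr]$ shows that $\Psi$ is multiplicative, so $ab=0\Leftrightarrow\Psi(a)\Psi(b)=0$ and similarly for $a\dg$; the defining property of the carrier in axiom SA6 then uniquely identifies $\Psi(a\dg)$ with $\Psi(a)\dg$, giving (4). The main obstacle is condition (2), because the vector-lattice isomorphism from Theorem \ref{th:cristTh2} does not a priori respect the algebra multiplication; the density-and-continuity argument described above is what bridges this gap, after which the remaining structural conditions follow formally. The inverse $\Psi^{-1}$ satisfies the same conditions by symmetry (or by Remark \ref{rm:GHisomorphism}), so $\Psi$ is a GH-isomorphism.
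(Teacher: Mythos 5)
Your proposal is correct, but it is worth noting that the paper itself offers no in-text proof of Theorem \ref{th:ghcom}: it is imported wholesale by citation to \cite[Theorem 5.9]{FPproj} and \cite[Theorem 6.5]{FJPstat}, where the representation is built directly from the functional representation theory of commutative synaptic algebras. Your argument is therefore a genuinely different, self-contained reconstruction from the internal machinery of this paper: you obtain the order-unit-normed vector lattice isomorphism $\Psi$ from the Cristescu-type Theorem \ref{th:cristTh2} (after identifying $P$ with the characteristic elements via Lemma \ref{lm:simple,etc}(iii)), and then upgrade it to a synaptic/GH-isomorphism. The only nontrivial step is condition (2) of Definition \ref{df:morphisms}, and your treatment is sound: for $a=\sum_i\alpha_ip_i$ with pairwise orthogonal projections, $a^2=\sum_i\alpha_i^2p_i$ and the Boolean isomorphism $\psi$ sends the $p_i$ to indicator functions of pairwise disjoint clopen sets, so both sides agree; norm density of simple elements (\cite[Corollary 8.6]{Fsa}, quoted in Section \ref{sc:SAGHA}), the isometry of $\Psi$, and the estimate $\|a_n^2-a^2\|\leq\|a_n-a\|\,\|a_n+a\|$ (valid since $a_n^2-a^2=(a_n-a)\odot(a_n+a)$ in the commutative case) complete the extension. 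Multiplicativity then follows from the polarization identity, and condition (4) follows from the uniqueness clause in SA6 together with surjectivity of $\Psi$, exactly as you say; conditions (1), (3), (5) and the statement about $\Psi^{-1}$ are immediate from $\Psi$ being a surjective order and order-unit isomorphism. What your route buys is independence from the external references; what the paper's route buys is brevity and consistency with the more general functional representation of (not necessarily monotone $\sigma$-complete) commutative synaptic algebras in \cite{FPproj}.
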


\begin{theorem}\label{th:veclatGH}
{\rm(i)} Every commutative GH-algebra is a monotone $\sigma$-complete order
unit normed vector lattice. {\rm(ii)} Conversely, if $(V,u)$ is a monotone
$\sigma$-complete order unit normed vector lattice and $B$ is the Boolean
$\sigma$-algebra of characteristic elements in $V$, then a multiplication
$(v,w)\mapsto vw$ can be defined on $V$ in such a way that $V$ becomes a
commutative Banach GH-algebra with order unit $u$ and $B$ is precisely the
set $P$ of projections in $V$.
\end{theorem}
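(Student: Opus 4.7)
Part (i) is a bookkeeping consolidation of earlier results: a commutative GH-algebra is in particular a commutative synaptic algebra, which by Theorem~\ref{th:commut} is lattice-ordered and hence, combined with SA1, an order unit normed vector lattice; the monotone $\sigma$-completeness is part of Theorem~\ref{th:CGH}.

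For part (ii) the plan is to transport the pointwise multiplication of $C(X,\reals)$ back to $V$ through Cristescu's representation. Let $X$ be the basically disconnected Stone space of $B$ and let $\Psi\colon V\to C(X,\reals)$ be the isomorphism of order unit normed vector lattices furnished by Theorem~\ref{th:cristTh2}, which restricts to the Stone isomorphism $\psi\colon B\to P(X,\reals)$ on characteristic elements. The paragraph immediately preceding Theorem~\ref{th:ghcom} records that, since $X$ is basically disconnected, $C(X,\reals)$ is a commutative GH-algebra under pointwise operations. Define $vw:=\Psi\sp{-1}(\Psi(v)\Psi(w))$; this makes $\Psi$ an isomorphism of commutative unital real algebras in addition to being an isomorphism of order unit normed vector lattices. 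Since $\Psi$ now respects addition, scalar multiplication, order, lattice operations, squares, square roots, inverses, and carriers, the synaptic axioms SA1--SA7 transfer formally from $C(X,\reals)$ to $V$; the hypothesized monotone $\sigma$-completeness of $V$ is exactly the CV condition in the commutative setting, so $V$ becomes a commutative GH-algebra, and Theorem~\ref{th:CGH} upgrades it to a Banach algebra. Finally, the projections of $V$ (idempotents for the new multiplication) coincide with the characteristic elements of $V$ by Lemma~\ref{lm:simple,etc}(iii), which yields $P=B$.

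The main nonformal step is the assertion that $C(X,\reals)$ really is a commutative GH-algebra when $X$ is basically disconnected, and in particular that it satisfies the carrier axiom SA6. I would record this explicitly: given $f\in C(X,\reals)$, the set $\{x\in X:f(x)\ne 0\}$ is open and $F\sb{\sigma}$, so its closure $K$ is clopen by basic disconnection, and $f\dg:=\chi\sb{K}$ then satisfies $fg=0\Leftrightarrow f\dg g=0$ for every $g\in C(X,\reals)$. Once this is in hand, the remainder of the argument is either pointwise verification of SA1--SA5 and SA7 inside $C(X,\reals)$ or a routine transfer across $\Psi$.
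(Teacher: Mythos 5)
Your proposal is correct and follows essentially the same route as the paper: part (i) by combining Theorems~\ref{th:commut} and~\ref{th:CGH}, and part (ii) by using Theorem~\ref{th:cristTh2} to obtain the vector-lattice isomorphism $\Psi\colon V\to C(X,\reals)$, invoking the fact that $C(X,\reals)$ is a commutative GH-algebra for basically disconnected $X$, and pulling back the pointwise product via $vw:=\Psi\sp{-1}(\Psi(v)\Psi(w))$. The only cosmetic difference is at the end: the paper identifies $P$ with $B$ by the direct chain $p=p\sp{2}\Leftrightarrow\Psi(p)=(\Psi(p))\sp{2}\Leftrightarrow\Psi(p)\in P(X,\reals)\Leftrightarrow p\in B$, whereas you cite Lemma~\ref{lm:simple,etc}(iii); both are valid.
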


\begin{proof}
Part (i) follows from Theorems \ref{th:commut} and \ref{th:CGH}. To prove (ii),
let $(V,u)$ be a monotone $\sigma$-complete order unit normed vector lattice. By
Theorem \ref{th:Handelman}, $V$ is a Banach space. By Theorem \ref{th:cristTh2},
the set $B$ of characteristic elements of $V$ forms a Boolean $\sigma$-algebra
with a basically disconnected Stone space $X$; moreover, there is isomorphism of
order unit normed vector lattices, $\Psi\colon V\to C(X,\reals)$, of $V$ onto
$C(X,\reals)$ such that the restriction $\psi$ of $\Psi$ to $B$ is a Boolean
isomorphism of $B$ onto $P(X,\reals)$. In particular, $\Psi(u)=\psi(u)=1$. By
\cite[Theorem 6.3]{FJPstat}, $C(X,\reals)$ is a commutative GH-algebra. For
$v,w\in V$, we define the product $vw\in V$ by $vw:=\Psi\sp{-1}\left(\Psi(v)
\Psi(w)\right)$, whereupon $V$ is organized into a commutative, associative,
partially ordered archimedean real linear algebra with a unity element $u$ that
is also an order unit. Obviously, $\Psi\colon V\to C(X,\reals)$ is an isomorphism
of real linear algebras.  Since $C(X,\reals)$ is a commutative GH-algebra, so is
$V$. Also, if $p\in V$, then $p=p\sp{2}$ iff $\Psi(p)=(\Psi(p))\sp{2}$ iff $\Psi(p)
\in P(X,\reals)$ iff $p\in B$.
\end{proof}

\section{States} \label{sc:states}

Just as is the case for any order unit normed space, a \emph{state} on the
synaptic algebra $A$ is defined to be a linear functional $\rho:A\to\reals$ that
is positive ($a\in A\sp{+}\Rightarrow\rho(a)\in\reals\sp{+}$) and normalized
($\rho(1)=1$) \cite[Definition 4.5]{FJPstat}. The set of states on $A$ (the
\emph{state space} of $A$), which is a convex set, is denoted by $S(A)$, and the
set of extreme points in $S(A)$ (\emph{extremal states} on $A$) is denoted by
$Ext(S(A))$.  A state $\rho\in S(A)$ is said to be \emph{$\sigma$-additive} iff,
for every ascending sequence $(a\sb{n})\sb{n\in\Nat}$ in $A$, $a\sb{n}\nearrow
a\in A\Rightarrow \rho(a\sb{n})\nearrow\rho(a)$ in $\reals$. By \cite[Theorems
4.6, 4.9]{FJPstat} $S(A)$ determines both the partial order and the order-unit
norm on $A$.

The next three theorems provide characterizations of extremal states on a
commutative synaptic algebra and on a commutative GH-algebra.

\begin{theorem}{\rm \cite[Theorem 7.1]{FJPstat}} \label{th:extsyn}
Let $A$ be a commutative synaptic algebra, hence a vector lattice. Then for
$\rho\in S(A)$, the following conditions are mutually equivalent{\rm: (i)}
$\rho\in Ext(S(A))$. {\rm(ii)} $\rho:A\to\reals$ is a lattice homomorphism.
{\rm(iii)} $\rho(a\wedge b)=\min\{\rho(a),\rho(b)\}$ for all $a,b\in A^+$.
\end{theorem}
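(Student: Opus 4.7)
The plan is to prove \textrm{(ii)}$\Leftrightarrow$\textrm{(iii)}, \textrm{(ii)}$\Rightarrow$\textrm{(i)}, and \textrm{(i)}$\Rightarrow$\textrm{(iii)}, closing the circle.

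First I would handle \textrm{(ii)}$\Leftrightarrow$\textrm{(iii)}. The direction \textrm{(ii)}$\Rightarrow$\textrm{(iii)} is immediate. For \textrm{(iii)}$\Rightarrow$\textrm{(ii)}, it suffices to remove the restriction to $A\sp{+}$: given $a,b\in A$, pick $\lambda\in\reals$ with $a+\lambda,\,b+\lambda\in A\sp{+}$, note $(a+\lambda)\wedge(b+\lambda)=(a\wedge b)+\lambda$, and apply \textrm{(iii)} together with the linearity and normalization of $\rho$. Preservation of $\vee$ then follows from Dedekind's law $a\vee b+a\wedge b=a+b$.

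Next, for \textrm{(ii)}$\Rightarrow$\textrm{(i)}, I would exploit that a lattice homomorphism forces $\rho(p)\in\{0,1\}$ for every projection $p\in P$: by Lemma \ref{lm:Projections} and Lemma \ref{lm:simple,etc}, $p\wedge(1-p)=0$ in $A$, so $\min\{\rho(p),1-\rho(p)\}=\rho(p\wedge(1-p))=0$. If $\rho=\frac{1}{2}(\rho\sb{1}+\rho\sb{2})$ with $\rho\sb{i}\in S(A)$, then $0\leq\rho\sb{i}(p)\leq 1$ forces $\rho\sb{1}(p)=\rho\sb{2}(p)=\rho(p)$. Since every simple element is a real linear combination of projections, $\rho\sb{1}$ and $\rho\sb{2}$ agree with $\rho$ on simple elements, and hence everywhere by the norm density of simple elements in $A$ and the norm continuity of states $|\rho(a)|\leq\|a\|$ coming from Lemma \ref{lm:NormProps}(i).

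The main work is \textrm{(i)}$\Rightarrow$\textrm{(iii)}, which I would approach by first showing an extremal $\rho$ is \emph{multiplicative}: $\rho(ab)=\rho(a)\rho(b)$ for all $a,b\in A$. Fix $e\in E$ and $b\in A$. If $0<\rho(e)<1$, set $\lambda:=\rho(e)$ and define $\rho\sb{1}(b):=\rho(eb)/\lambda$, $\rho\sb{2}(b):=\rho((1-e)b)/(1-\lambda)$. Commutativity and $e,b\in A\sp{+}\Rightarrow eb\in A\sp{+}$ make $\rho\sb{1},\rho\sb{2}$ into states, and $\rho=\lambda\rho\sb{1}+(1-\lambda)\rho\sb{2}$, so extremality gives $\rho(eb)=\rho(e)\rho(b)$. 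The boundary cases $\rho(e)\in\{0,1\}$ are dispatched by the Cauchy--Schwarz inequality $\rho(xy)\sp{2}\leq\rho(x\sp{2})\rho(y\sp{2})$ (obtained from $\rho((tx+y)\sp{2})\geq 0$, which uses SA2), combined with $e\sp{2}\leq e$ for $e\in E$: if $\rho(e)=0$ then $\rho(eb)\sp{2}\leq\rho(e\sp{2})\rho(b\sp{2})\leq\rho(e)\rho(b\sp{2})=0$; the case $\rho(e)=1$ reduces to this via $1-e$. Writing a general $a\in A$ as $a=\alpha e\sb{1}-\beta e\sb{2}$ with $e\sb{i}\in E$ yields $\rho(ab)=\rho(a)\rho(b)$ throughout $A$. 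Finally, by Lemma \ref{lm:simple,etc}(ii), $a\wedge b=\frac{1}{2}(a+b-|a-b|)$, and since $|a-b|\sp{2}=(a-b)\sp{2}$, multiplicativity yields $\rho(|a-b|)\sp{2}=\rho(a-b)\sp{2}$, hence $\rho(|a-b|)=|\rho(a)-\rho(b)|$, giving \textrm{(iii)}.

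The principal obstacle is the multiplicativity step: confirming that $\rho\sb{1},\rho\sb{2}$ are genuine states in the generic case and cleanly dispatching the two boundary cases via Cauchy--Schwarz. Once multiplicativity is in hand, the passage to the lattice-homomorphism property is a short algebraic computation using the vector-lattice identity for $a\wedge b$.
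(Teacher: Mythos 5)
Your argument is correct, but note that the paper itself offers no proof to compare it against: Theorem \ref{th:extsyn} is quoted verbatim from \cite[Theorem 7.1]{FJPstat} and is used here as an imported result. Judged on its own terms, your proof is sound and self-contained within the facts the paper records. The step (iii)$\Rightarrow$(ii) by translation invariance of $\wedge$ and Dedekind's law is fine; (ii)$\Rightarrow$(i) correctly exploits that a lattice homomorphism is $\{0,1\}$-valued on projections (since $p\wedge(1-p)=0$ by Lemma \ref{lm:simple,etc}), together with norm density of simple elements and the bound $|\rho(a)|\leq\|a\|$ from Lemma \ref{lm:NormProps}. The core step (i)$\Rightarrow$(iii) is the classical ``pure states on a commutative algebra are characters'' argument: the perturbation $\rho\sb{1}(b)=\rho(eb)/\rho(e)$ is a genuine state precisely because $e,b\in A\sp{+}$ with $eCb$ implies $eb\in A\sp{+}$ (recorded in Section \ref{sc:SAGHA}), Cauchy--Schwarz from SA2 together with $e\sp{2}\leq e$ dispatches the degenerate cases $\rho(e)\in\{0,1\}$, and $|a-b|\sp{2}=(a-b)\sp{2}$ converts multiplicativity into preservation of $\wedge$ via Lemma \ref{lm:simple,etc}(ii). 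A pleasant by-product is that your route simultaneously establishes the equivalence of extremality and multiplicativity (the content of (i)$\Leftrightarrow$(iii) in Theorem \ref{th:extgh1}) for an arbitrary commutative synaptic algebra, which the paper likewise only cites. Two cosmetic points: in (ii)$\Rightarrow$(i) you should allow an arbitrary convex combination $\rho=t\rho\sb{1}+(1-t)\rho\sb{2}$ with $t\in(0,1)$ rather than only $t=\tfrac12$ (the same argument works, since $\rho(p)\in\{0,1\}$ and $\rho\sb{i}(p)\in[0,1]$ pin down $\rho\sb{i}(p)$ for any such $t$); and in writing $a=\alpha e\sb{1}-\beta e\sb{2}$ you should note the trivial cases $a\sp{+}=0$ or $a\sp{-}=0$.
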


\begin{theorem}{\rm \cite[Theorem 7.2]{FJPstat}} \label{th:extgh1}
Suppose that $A$ is a commutative GH-algebra, $X$ is the compact Hausdorff
basically disconnected Stone space of the $\sigma$-complete Boolean algebra
$P$, $\Psi:A\to C(X,\reals)$ is the synaptic isomorphism of Theorem
\ref{th:ghcom}, and $\rho\in S(A)$. Then the following conditions are mutually
equivalent{\rm:}
\begin{enumerate}
\item $\rho\in Ext(S(A))$.
\item There exists $x\in X$ such that $\rho(a)=(\Psi(a))(x)$ for all $a\in A$.
\item $\rho$ is multiplicative, i.e., $\rho(ab)=\rho(a)\rho(b)$ for all $a,b\in A$.
\item $\rho(p)\in \{ 0,1\}$ for all $p\in P$.
\end{enumerate}
\end{theorem}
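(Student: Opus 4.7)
The plan is to use the GH-isomorphism $\Psi\colon A\to C(X,\reals)$ of Theorem \ref{th:ghcom} to transport the question to $C(X,\reals)$, and to establish the cycle (i) $\Rightarrow$ (iv) $\Rightarrow$ (ii) $\Rightarrow$ (iii) $\Rightarrow$ (iv), together with a separate argument for (iv) $\Rightarrow$ (i). Throughout I will use that a state $\rho$ on the order unit space $A$ is automatically norm continuous with $\|\rho\|=\rho(1)=1$ (via Lemma \ref{lm:NormProps}), and that the simple elements are norm dense in $A$ (as noted in Section \ref{sc:SAGHA}).

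For (i) $\Rightarrow$ (iv) I would invoke Theorem \ref{th:extsyn}: since $A$ is commutative, an extremal state $\rho$ is a lattice homomorphism. Theorem \ref{th:commut} makes $P$ a Boolean algebra, so $p\wedge\sb{E}(1-p)=0$ for any $p\in P$, and by Lemma \ref{lm:simple,etc}(ii) this meet coincides with the meet in the vector lattice $A$. Hence $0=\rho(p\wedge(1-p))=\min\{\rho(p),1-\rho(p)\}$, forcing $\rho(p)\in\{0,1\}$.

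The heart of the proof is (iv) $\Rightarrow$ (ii). The restriction $\rho|\sb{P}$ is a $\{0,1\}$-valued finitely additive probability on the Boolean $\sigma$-algebra $P$. Transporting through the Stone isomorphism $\psi\colon P\to{\mathcal F}(X)$, I would set $\mathcal{U}:=\{K\in{\mathcal F}(X):\rho(\psi\sp{-1}(\chi\sb{K}))=1\}$ and verify that $\mathcal{U}$ is an ultrafilter of clopen sets: the identity $\rho(p)+\rho(1-p)=1$ ensures exactly one of $K,X\setminus K$ lies in $\mathcal{U}$, and Lemma \ref{lm:pCq}(iv) (with all projections pairwise compatible in the commutative case) identifies $p\wedge q$ with $pq\leq p,q$, from which one checks that $\mathcal{U}$ is closed under finite intersection. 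By the finite intersection property and compactness of $X$, $\bigcap\mathcal{U}\neq\emptyset$; fix any $x$ in this intersection. Then $\rho(p)=\Psi(p)(x)$ for every $p\in P$, and by linearity the same holds for every simple element of $A$. Since the simple elements are norm dense, $\rho$ is norm continuous, and evaluation at $x$ in $C(X,\reals)$ is continuous for the supremum (= order unit) norm, the formula $\rho(a)=\Psi(a)(x)$ extends to all $a\in A$.

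The implication (ii) $\Rightarrow$ (iii) is immediate since $\Psi$ is an algebra homomorphism and multiplication in $C(X,\reals)$ is pointwise; (iii) $\Rightarrow$ (iv) follows from $\rho(p)=\rho(p\sp{2})=\rho(p)\sp{2}$. Finally, for (iv) $\Rightarrow$ (i), if $\rho=\lambda\rho\sb{1}+(1-\lambda)\rho\sb{2}$ with $\rho\sb{1},\rho\sb{2}\in S(A)$ and $\lambda\in(0,1)$, then for each $p\in P$ the convex combination of the values $\rho\sb{i}(p)\in[0,1]$ lands in $\{0,1\}$, forcing $\rho\sb{1}(p)=\rho\sb{2}(p)=\rho(p)$; linearity on simple elements and norm continuity then give $\rho\sb{1}=\rho\sb{2}=\rho$, so $\rho\in Ext(S(A))$. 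The main obstacle is the ultrafilter / compactness step in (iv) $\Rightarrow$ (ii) and the density argument that extends the point evaluation formula from simple elements to all of $A$; everything else is bookkeeping.
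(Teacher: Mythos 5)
The paper offers no proof of this theorem at all: it is imported verbatim as \cite[Theorem 7.2]{FJPstat}, so there is no internal argument to measure yours against. Your blind proof is correct and essentially self-contained given what the paper does make available (Theorems \ref{th:extsyn} and \ref{th:ghcom}, Lemmas \ref{lm:pCq} and \ref{lm:simple,etc}, norm-density of the simple elements, and the automatic norm-continuity $|\rho(a)|\leq\|a\|$ of a state). Your implication graph does close: (i)$\Rightarrow$(iv)$\Rightarrow$(ii)$\Rightarrow$(iii)$\Rightarrow$(iv) together with (iv)$\Rightarrow$(i) yields all four equivalences. Two small repairs are worth recording. In (i)$\Rightarrow$(iv), the statement you actually want is Lemma \ref{lm:simple,etc}(iii), namely that $p\wedge(1-p)=0$ holds in the vector lattice $A$ itself (your passage from the meet in $E$ to the meet in $A$ does work, since the $A$-meet is sandwiched between $0$ and $p$, but part (iii) gives it directly). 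In the ultrafilter step, $pq\leq p,q$ alone does not force $\rho(pq)=1$ from $\rho(p)=\rho(q)=1$; you need $p\vee q=p+q-pq\leq 1$ (Dedekind's law), whence $\rho(pq)\geq\rho(p)+\rho(q)-1=1$. Both are one-line fixes, not gaps. Your route --- a $\{0,1\}$-valued state on the Boolean $\sigma$-algebra $P$ determines, via Stone's theorem, an ultrafilter of clopen sets whose common point realizes $\rho$ as evaluation, followed by linearity on simple elements and a density/continuity extension --- is the standard argument one would expect the cited source to carry out; the only thing the citation buys the paper is not having to write it down.
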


\begin{theorem}  \label{th:extgh2}
Let $A$ be a commutative GH-algebra. Then, with the notation of Theorem
\ref{th:extgh1}{\rm: (i)} There is an affine bijection $\rho\leftrightarrow
\gamma$ between states $\rho\in S(A)$ and states $\gamma\in S(C(X,\reals))$
such that $\gamma(f)=\rho(\Psi\sp{-1}(f))$ for all $f\in C(X,\reals)$ and
$\rho(a)=\gamma(\Psi(a))$ for all $a\in A$. In particular, if $\rho
\leftrightarrow\gamma$, then $\rho$ is extremal iff $\gamma$ is extremal.
{\rm(ii)} There is a bijective correspondence $x\leftrightarrow\rho\sb{x}$
between points $x\in X$ and extremal states $\rho\sb{x}$ on $A$ such that
$\rho\sb{x}(a)=(\Psi(a))(x)$ for all $x\in X$ and all $a\in A$.
\end{theorem}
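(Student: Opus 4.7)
The plan is to leverage the GH-isomorphism $\Psi\colon A\to C(X,\reals)$ provided by Theorem~\ref{th:ghcom} to transport states back and forth, and then invoke Theorem~\ref{th:extgh1} for the identification of extremal states with points of $X$.

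For part (i), I would first observe that because $\Psi$ is a linear bijection with $\Psi(1)=1$ which preserves the order in both directions (being a GH-isomorphism), the assignments $\rho\mapsto\rho\circ\Psi\sp{-1}$ and $\gamma\mapsto\gamma\circ\Psi$ carry states to states: linearity is immediate, positivity follows from $\Psi(A\sp{+})=C(X,\reals)\sp{+}$, and normalization follows from $\Psi(1)=1$. These two assignments are mutually inverse, so they give a bijection $S(A)\leftrightarrow S(C(X,\reals))$. The bijection is affine because it is just composition with a fixed linear map, and the identity $\gamma(f)=\rho(\Psi\sp{-1}(f))$ (equivalently $\rho(a)=\gamma(\Psi(a))$) is built in by construction. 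Since affine bijections between convex sets preserve extreme points, $\rho$ is extremal iff $\gamma$ is extremal.

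For part (ii), I would apply Theorem~\ref{th:extgh1}, which asserts that every extremal state $\rho$ on $A$ has the form $\rho(a)=(\Psi(a))(x)$ for some $x\in X$. Denoting this state by $\rho\sb{x}$, we obtain a surjection $x\mapsto\rho\sb{x}$ from $X$ onto $Ext(S(A))$. For injectivity, suppose $\rho\sb{x}=\rho\sb{y}$. Then $(\Psi(a))(x)=(\Psi(a))(y)$ for every $a\in A$; since $\Psi$ is surjective, this means $f(x)=f(y)$ for every $f\in C(X,\reals)$. Because $X$ is a Stone space, the discussion in Section~\ref{sc:Prelim} shows that distinct points of $X$ are separated by continuous functions (indeed by elements of $P(X,\reals)$), so $x=y$.

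There is no serious obstacle here: the content is essentially a bookkeeping exercise, with the genuine mathematical input already packaged in Theorem~\ref{th:ghcom} (existence of $\Psi$) and Theorem~\ref{th:extgh1} (characterization of extremal states as point evaluations). The only point requiring a small argument is the injectivity in part (ii), which reduces to the separation-of-points property of the Stone space $X$.
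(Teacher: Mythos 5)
Your proposal is correct and follows essentially the same route as the paper: part (i) is transport of states through the isomorphism $\Psi$, and part (ii) combines the identification of extremal states with point evaluations (the paper cites this for $C(X,\reals)$ directly and pulls it back via (i), while you invoke the equivalent statement in Theorem~\ref{th:extgh1}) with the separation-of-points property of the Stone space for injectivity. The extra detail you supply in (i) about the two composition maps being mutually inverse and affine is exactly what the paper leaves implicit in its one-line justification.
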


\begin{proof}
Part (i) follows from the fact that $\Psi:A\to C(X,\reals)$ is a synaptic
isomorphism.

(ii)  According to \cite[Theorem 4.10 (iii), (iv)]{FJPstat}, each $x\in X$
induces a state $\gamma_x\in S(C(X,\reals))$ such that $\gamma_x(f)=f(x)$
for all $f\in C(X,\reals)$; moreover, $Ext(S(C(X,\reals))=\{\gamma_x:x\in X\}$.
For each $x\in X$, let $\rho\sb{x}$ be the extremal state on $A$ corresponding
to the extremal state $\gamma\sb{x}$ on $C(X,\reals)$ according to (i), so that
$\rho\sb{x}(a)=\gamma\sb{x}(\Psi(a))=(\Psi(a))(x)$ for all $a\in A$. If $x,y
\in X$ and $\rho\sb{x}=\rho\sb{y}$, then $f(x)=f(y)$ for all $f\in C(X,\reals)$,
and since $C(X,\reals)$ separates points in $X$, it follows that $x=y$.
\end{proof}

\begin{definition} \label{df:hatetc}
Let $A$ be a commutative GH-algebra. Then, with the notation of Theorem
\ref{th:extgh2}, we define ${\hat a}:=\Psi(a)\in C(X,\reals)$ for all $a
\in A$. Also, in view of Theorem \ref{th:extgh2} (ii), we can and do
identify each point $x\in X$ with the extremal state $\rho\sb{x}$ on $A$,
so that $x(a)={\hat a}(x)$ for all $a\in A$ and all $x\in X$.
\end{definition}

\section{A Loomis-Sikorski type theorem for commutative GH-algebras}

The classical Loomis-Sikorski theorem for Boolean $\sigma$-algebras \cite{Loo,
Sik} has been extended to $\sigma$MV-algebras in \cite{BW, Dvls, Munls} (Theorem
\ref{th:LSmv} below).  In \cite{Dvls}, it was also generalized to Dedekind
$\sigma$-complete abelian $\ell$-groups (see also \cite[\S 7.14]{DvPu}). In
this section, we shall generalize the Loomis-Sikorski theorem to commutative
GH-algebras (Theorem \ref{th:LSCGH} below).

We shall be dealing with a nonempty set $X$ and with the set $\reals\sp{X}$
of all functions $f\colon X\to\reals$. The partial order $\leq$ is defined
pointwise on $\reals\sp{X}$, i.e., for $f,g\in\reals\sp{X}$, $f\leq g$ iff
$f(x)\leq g(x)$ for all $x\in X$. Similarly, the operations $f\pm g$,
$fg$, $\alpha f$ for $\alpha\in\reals$, $\min(f,g)$ and $\max(f,g)$ are
understood to be defined pointwise on $\reals\sp{X}$. The constant functions
$x\mapsto 0$ and $x\mapsto 1$ will be denoted by $0,1\in\reals\sp{X}$. Unless
$X$ is finite, $1$ cannot be an order unit in $\reals\sp{X}$; however, $\reals
\sp{X}$ is an archimedean vector lattice with $f\vee g=\max(f,g)$ and $f\wedge
g=\min(f,g)$, and it is also a commutative linear algebra with unit $1$ under
the pointwise operations. The set $X$ may be required to satisfy certain
special conditions, e.g., it might be stipulated that $X$ is a Stone space.

Let $(f\sb{n})\sb{n=1}\sp{\infty}$ be a sequence in $\reals\sp{X}$ that is
bounded above in $\reals\sp{X}$ (i.e., there exists $g\in\reals\sp{X}$ such
that $f\sb{n}\leq g$ for all $n\in\Nat$). Then by Dedekind completeness of
$\reals$, the pointwise supremum of $(f\sb{n})\sb{n=1}\sp{\infty}$ exists and
will be denoted by $\sup\sb{n}f\sb{n}\in\reals\sp{X}$. Thus, $(\sup\sb{n}f
\sb{n})(x):=\sup\sb{n}(f\sb{n}(x))$ for each $x\in X$, and it is clear that
$\sup\sb{n}f\sb{n}=\bigvee\sb{n\in\Nat}f\sb{n}$ in the poset $\reals\sp{X}$.
Thus, $\reals\sp{X}$ is Dedekind $\sigma$-complete.

We shall be considering nonempty subsets ${\mathcal T}\subseteq\reals\sp
{X}$ that satisfy certain prescribed conditions, e.g., regarding their poset
structure, being closed under various operations on $\reals\sp{X}$,
consisting only of specified kinds of functions, etc.

\begin{definition} \label{df:tribe}
A \emph{tribe} on the nonempty set $X$ is a set ${\mathcal T}\subseteq
\reals\sp{X}$ that  satisfies the following conditions: (1) $f\in{\mathcal T}
\Rightarrow 0\leq f\leq 1$. (2) $0\in{\mathcal T}$. (3) $f\in{\mathcal T}
\Rightarrow 1-f\in{\mathcal T}$. (4) $f,g\in{\mathcal T}\Rightarrow\min(f+g,1)
\in{\mathcal T}$. (5) If $(f\sb{n})\sb{n=1}\sp{\infty}$ is an ascending
sequence of functions in ${\mathcal T}$ and $f\sb{n}\nearrow f\in\reals\sp{X}$
pointwise (i.e., $f=\sup\sb{n}f\sb{n}$) then $f\in{\mathcal T}$.
\end{definition}

By \cite[Proposition 7.1.6]{DvPu}, every tribe is a $\sigma$MV-algebra that is
closed under pointwise suprema of sequences of its elements.

If $X$ is any nonempty set, then the set ${\mathcal T}\sb{e}$ of all functions
$f\in\reals\sp{X}$ such that $0\leq f\leq 1$ is a convex tribe and every tribe
${\mathcal T}\subseteq\reals\sp{X}$ satisfies ${\mathcal T}\subseteq{\mathcal T}
\sb{e}$. Moreover, the intersection of any family of tribes (convex tribes)
${\mathcal T}\subseteq\reals\sp{X}$ is again a tribe (a convex tribe). Thus, if
${\mathcal T}\sb{0}\subseteq{\mathcal T}\sb{e}$, then the intersection of all
tribes (convex tribes) ${\mathcal T}$ with ${\mathcal T}\sb{0}\subseteq{\mathcal T}$
is a tribe (a convex tribe) called the tribe (the convex tribe) \emph{generated by}
${\mathcal T}\sb{0}$.

For purposes of this paper, by a \emph{functional representation} of a partially
ordered algebraic structure ${\mathcal P}$ we shall mean a subset ${\mathcal T}
\subseteq\reals\sp{X}$ with partially ordered and algebraic structure corresponding
to that of ${\mathcal P}$ together with a surjective morphism $h\colon{\mathcal T}
\to{\mathcal P}$ of ${\mathcal T}$ onto ${\mathcal P}$.  For instance,
the Loomis-Sikorski functional representation theorem for $\sigma$MV-algebras
\cite[Theorem 1.7.22]{DvPu} is as follows.

\begin{theorem}\label{th:LSmv}
For each $\sigma$MV-algebra $M$ there exists a nonempty set $X$, a tribe
${\mathcal T}\subseteq\reals\sp{X}$, and a surjective $\sigma$MV-homomorphism
$h\colon{\mathcal T}\to M$ from ${\mathcal T}$ onto $M$.
\end{theorem}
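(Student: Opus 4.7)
The plan is to reduce the statement, via the Mundici correspondence, to the already-cited Loomis-Sikorski theorem for Dedekind $\sigma$-complete abelian $\ell$-groups, and then to verify that the unit interval of the resulting function $\ell$-group is the desired tribe.

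By Mundici's theorem, $M$ is isomorphic to $\Gamma(G,u):=G[0,u]$ for some abelian $\ell$-group $G$ with strong order unit $u$; because $M$ is a $\sigma$MV-algebra, the group $G$ is Dedekind $\sigma$-complete. The $\ell$-group version of Loomis-Sikorski (referenced just above as \cite{Dvls}) then provides a nonempty set $X$, a Dedekind $\sigma$-complete $\ell$-subgroup $H\subseteq\reals\sp{X}$ containing the constant function $1$ as a strong order unit and closed under bounded pointwise suprema of countable ascending sequences, and a surjective $\sigma$-$\ell$-homomorphism $\Phi\colon H\to G$ satisfying $\Phi(1)=u$. Let $\mathcal{T}:=H[0,1]=\{f\in H:0\leq f\leq 1\}$.

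I would verify that $\mathcal{T}$ is a tribe in the sense of Definition \ref{df:tribe}. Conditions (1)--(3) are immediate from $\mathcal{T}\subseteq\reals\sp{X}$ together with $1-f\in H$ whenever $f\in H$. Condition (4) holds because $f+g\in H$ and the pointwise truncation $\min(f+g,1)=(f+g)\wedge 1$ is computed using the $\ell$-operations of $H$, so it lies in $H[0,1]=\mathcal{T}$. Condition (5) holds because any ascending sequence $(f\sb{n})$ in $\mathcal{T}$ is bounded above by $1\in H$, so its pointwise supremum lies in $H$ by the closure hypothesis on $H$ and is automatically in $[0,1]\sp{X}$. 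Next, define $h\colon\mathcal{T}\to M$ as the composition of $\Phi|\sb{\mathcal{T}}\colon\mathcal{T}\to G[0,u]$ with the Mundici isomorphism $G[0,u]\to M$. Surjectivity follows because any $a\in G[0,u]$ is $\Phi(b)$ for some $b\in H$, and replacing $b$ by $(b\vee 0)\wedge 1\in\mathcal{T}$ (whose image is still $a$, since $\Phi$ is an $\ell$-homomorphism and $0\leq a\leq u$) produces a preimage inside $\mathcal{T}$.

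The main thing left to check is that $h$ is a $\sigma$MV-homomorphism. Using the standard $\ell$-group formulas $e\oplus f=(e+f)\wedge u$, $e\sp{\perp}=u-e$, and suprema computed inside the $\ell$-group, this reduces to verifying that $\Phi|\sb{\mathcal{T}}$ preserves $f\mapsto 1-f$, $(f,g)\mapsto(f+g)\wedge 1$, and countable ascending suprema --- each of which follows directly from $\Phi$ being a $\sigma$-$\ell$-homomorphism with $\Phi(1)=u$. The main obstacle in this approach is therefore not the MV-algebraic bookkeeping, which is routine, but rather the invocation of the $\ell$-group Loomis-Sikorski theorem itself, whose nontrivial content (the explicit construction of $X$, $H$, and $\Phi$) is imported wholesale from \cite{Dvls} and ultimately rests on taking $X$ to be a space of $\sigma$-valuations on $G$ rich enough to separate points.
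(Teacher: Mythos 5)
The paper does not actually prove Theorem \ref{th:LSmv}: it is quoted verbatim from \cite[Theorem 1.7.22]{DvPu}, with the original sources \cite{BW, Dvls, Munls}, and is then used as an ingredient in the proof of the paper's own generalization, Theorem \ref{th:LSCGH}. So there is no in-paper argument to match yours against; what can be assessed is whether your reduction stands on its own. The MV-algebraic bookkeeping in your proposal is fine: given a Dedekind $\sigma$-complete $\ell$-subgroup $H\subseteq\reals\sp{X}$ with pointwise operations, unit $1$, closure under bounded pointwise countable suprema, and a surjective $\sigma$-$\ell$-homomorphism $\Phi\colon H\to G$ with $\Phi(1)=u$, the verification that $H[0,1]$ satisfies conditions (1)--(5) of Definition \ref{df:tribe}, that truncation $(b\vee 0)\wedge 1$ yields surjectivity onto $G[0,u]$, and that $\Phi$ restricts to a $\sigma$MV-homomorphism are all routine and correctly carried out.

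The genuine problem is the direction of your reduction. In \cite{Dvls} --- the very source you import the $\ell$-group theorem from --- the Loomis-Sikorski theorem is proved first for $\sigma$MV-algebras and is then \emph{extended} to Dedekind $\sigma$-complete unital abelian $\ell$-groups via Mundici's $\Gamma$-functor (the paper's own phrasing, ``it was also generalized to Dedekind $\sigma$-complete abelian $\ell$-groups,'' reflects this order). The same direction is visible inside the paper's proof of Theorem \ref{th:LSCGH}: one first constructs the tribe-level surjection $h\sb{1}\colon{\mathcal T}\sb{1}\to E$ directly --- Stone space of the Boolean $\sigma$-algebra, functions agreeing with some $\hat{a}$ off a meager set, the Baire category theorem for well-definedness --- and only afterwards lifts $h\sb{1}$ to the ambient algebra using Mundici's categorical equivalence \cite{Munint}. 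Your argument runs this machinery backwards, so as a citation-based proof it is circular, and the actual mathematical content of the theorem (the choice of $X$, the meager-set quotient, and the Baire category argument that makes $h$ well defined) is never engaged. To repair it you would need either an independent proof of the $\ell$-group (or Riesz space) Loomis-Sikorski theorem not routed through the MV case, or, more in the spirit of the paper, the direct construction just described.
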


We note that Theorem \ref{th:ghcom} yields a functional representation for a
commutative GH-algebra $A$ in terms of a basically disconnected Stone space $X$
and the commutative GH-algebra $C(X,\reals)\subseteq\reals\sp{X}$; indeed it
provides a GH-isomorphism $\Psi\sp{-1}\colon C(X,\reals)\to A$. However, this
representation may suffer from the defect that there may be bounded sequences
$(f\sb{n})\sb{n\in\Nat}$ in $C(X,\reals)$ such that the supremum $\bigvee\sb{n\in\Nat}
f\sb{n}$ in $C(X,\reals)$ exists but is not the pointwise supremum $\sup\sb{n}f\sb{n}$.
In the Loomis-Sikorski functional representation of $A$ (Theorem \ref{th:LSCGH} below),
this defect is ameliorated by employing the following definition (which is an extension
of the notion of a g-tribe \cite[p. 465]{DvPu}).

\begin{definition}\label{df:ghtribe}
A \emph{gh-tribe} on the nonempty set $X$ is a set ${\mathcal T}\subseteq
\reals\sp{X}$ such that:
\begin{enumerate}
\item[(1)] If $f\in{\mathcal T}$, then $f$ is bounded, i.e., there exist $\alpha,
 \beta\in\reals$ with $\alpha\leq f(x)\leq\beta$ for all $x\in X$.
\item[(2)] $0,1\in{\mathcal T}$.
\item[(3)] $f+g\in{\mathcal T}$ whenever $f,g\in {\mathcal T}$.
\item[(4)] For every $\alpha\in\reals$ and every $f\in{\mathcal T}$, $\alpha f
 \in{\mathcal T}$.
\item[(5)] If $(f_n)_{n=1}^\infty$ is a sequence of functions in ${\mathcal T}$
 for which there exists $f\in{\mathcal T}$ with $f_n\leq f$ for all $n\in\Nat$,
 then the pointwise supremum $\sup_n f_n$ belongs to ${\mathcal T}$.
\end{enumerate}
\end{definition}

Let ${\mathcal T}$ be a gh-tribe on $X$. In (5), it is clear that the pointwise
supremum $\sup_n f_n$ is also the supremum $\bigvee_{n\in\Nat}f_n$ in
${\mathcal T}$, so ${\mathcal T}$ is Dedekind $\sigma$-complete. Suppose $f,g
\in{\mathcal T}$. By (1), there exists $\beta\in\reals$ with $f(x),g(x)\leq\beta$
for all $x\in X$, so $f,g\leq \beta1$. Thus the pointwise supremum $\max(f,g)$ of
$f$ and $g$ belongs to ${\mathcal T}$ and it is the supremum $f\vee g$ of $f$ and
$g$ in ${\mathcal T}$. As $f\mapsto -f$ is an involution on ${\mathcal T}$, it
follows that the infimum $f\wedge g$ exists in ${\mathcal T}$, and $f\wedge g=
\min(f,g)$, whence ${\mathcal T}$ is a vector lattice. By (1), $1$ is an order
unit in ${\mathcal T}$, and since $R\sp{X}$ is archimedean, so is ${\mathcal T}$.
Therefore, in view of Theorem \ref{th:OUNVL}, every gh-tribe is a Dedekind
$\sigma$-complete Banach order unit normed vector lattice with order unit $1$.

Let $X$ be any nonempty set. Then the set ${\mathcal T}\sb{b}$ of all bounded
functions in $\reals\sp{X}$ is a gh-tribe on $X$ and every gh-tribe ${\mathcal T}$
on $X$ satisfies ${\mathcal T}\subseteq{\mathcal T}\sb{b}$. Moreover, the
intersection of any family of gh-tribes on $X$ is again a gh-tribe on $X$. Thus,
if ${\mathcal T}\sb{0}\subseteq{\mathcal T}\sb{b}$, then the intersection of all
gh-tribes ${\mathcal T}$ with ${\mathcal T}\sb{0}\subseteq{\mathcal T}$ is a
gh-tribe called the gh-tribe \emph{generated by} ${\mathcal T}\sb{0}$.

Clearly, every order unit normed vector lattice ${\mathcal T}\subseteq\reals\sp{X}$
with order unit $1$ satisfies ${\mathcal T}\subseteq{\mathcal T}\sb{b}$. Moreover,
the intersection of any family of such order unit normed vector lattices ${\mathcal T}
\subseteq\reals\sp{X}$ is again an order unit normed vector lattice with order unit
$1$. If ${\mathcal T}\sb{0}\subseteq{\mathcal T}\sb{b}$, then the intersection of all
order unit normed vector lattices ${\mathcal T}$ with order unit $1$ such that
${\mathcal T}\sb{0}\subseteq{\mathcal T}$ is called the order unit normed vector
lattice \emph{generated by} ${\mathcal T}\sb{0}$.

Let ${\mathcal T}$ be a gh-tribe on $X$. Since ${\mathcal T}$ is a Dedekind
$\sigma$-complete Banach order unit normed vector lattice, we infer from Theorem
\ref{th:veclatGH} that there exists a multiplication operation $(f,g)\mapsto fg$
on ${\mathcal T}$ such that ${\mathcal T}$ forms a commutative Banach GH-algebra
and the set of characteristic elements in ${\mathcal T}$ coincides with the Boolean
$\sigma$-algebra ${\mathcal P}$ of projections (idempotents) in ${\mathcal T}$.

\begin{lemma} \label{lm:pointwisemult}
Let ${\mathcal T}$ be a gh-tribe on $X$ and let $f,g,p\in{\mathcal T}$.
Then{\rm: (i)} $|f|\in{\mathcal T}$ is the pointwise absolute value.  {\rm(ii)}
Norm limits in ${\mathcal T}$ are pointwise limits. {\rm(iii)} $p\in{\mathcal P}
\Leftrightarrow p(x)\in\{0,1\}$ for all $x\in X$. {\rm(iv)} Multiplication on
${\mathcal T}$ is pointwise.
\end{lemma}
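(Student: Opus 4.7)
The plan is to dispense with (i) and (ii) quickly using the vector-lattice structure of $\mathcal{T}$ already elucidated before the lemma, then to deduce (iii) from the identification of projections with characteristic elements, and finally to approach (iv) via a three-step density argument (projections $\to$ simple elements $\to$ norm limits), which is the principal difficulty.

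For (i), the discussion preceding the lemma shows that the join in $\mathcal{T}$ is the pointwise maximum; since $-f\in\mathcal{T}$ by axiom (4) of a gh-tribe, $|f|=f\vee(-f)=\max(f,-f)$ is the pointwise absolute value. Part (ii) then follows immediately from Lemma \ref{lm:rhovsnorm}(ii) applied to $(\mathcal{T},1)$: the condition $|v_n-v|\leq\epsilon\cdot 1$ characterizing norm convergence becomes, via (i), the pointwise uniform bound $|v_n(x)-v(x)|\leq\epsilon$, which implies pointwise convergence. For (iii), I would use Theorem \ref{th:veclatGH}(ii) (or equivalently Lemma \ref{lm:simple,etc}(iii)) to identify $\mathcal{P}$ with the characteristic elements, i.e.\ those $p$ with $p\wedge(1-p)=0$; by (i) this reads $\min(p(x),1-p(x))=0$ pointwise, and since $0\leq p\leq 1$, we obtain $p(x)\in\{0,1\}$. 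The converse is immediate from the same pointwise computation.

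Part (iv) is the main obstacle, because Theorem \ref{th:veclatGH} defines the multiplication on $\mathcal{T}$ only indirectly, through its identification with some $C(Y,\reals)$. To show that the GH-product $fg$ equals the pointwise product, I will verify the equality first on projections $p,q\in\mathcal{P}$: commutativity of $\mathcal{T}$ and Lemma \ref{lm:pCq}(iv) yield $pq=p\wedge q$, which by (iii) and (i) is precisely the pointwise product of the $\{0,1\}$-valued functions $p$ and $q$. Bilinearity of both multiplications then extends the identification to arbitrary pairs of simple elements. For general $f,g\in\mathcal{T}$, I choose simple sequences $a_n\to f$ and $b_n\to g$ in norm (via \cite[Corollary 8.6]{Fsa}); since $\mathcal{T}$ is a Banach algebra by Theorem \ref{th:CGH}, $a_nb_n\to fg$ in norm. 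Applying (ii) to all three convergences and invoking continuity of real multiplication at each $x\in X$ gives $(fg)(x)=\lim_n a_n(x)b_n(x)=f(x)g(x)$, completing the identification.
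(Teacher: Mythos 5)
Your proposal is correct and follows essentially the same route as the paper's proof: pointwise suprema give (i), Lemma \ref{lm:rhovsnorm} gives (ii), the identification of projections with characteristic elements gives (iii), and (iv) is proved by the same three-step density argument (projections via $pq=p\wedge q$, then simple elements by bilinearity, then norm limits of simple approximants combined with (ii)). The only cosmetic difference is that you make explicit the Banach-algebra submultiplicativity justifying $a_nb_n\to fg$, which the paper leaves implicit.
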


\begin{proof}
(i) $|f|(x)=(f\vee(-f))(x)=\max(f(x),-f(x))=|f(x)|$ for all $x\in X$.

(ii) Suppose that $(f\sb{n})\sb{n\in\Nat}\subseteq{\mathcal T}$. By Lemma
\ref{lm:rhovsnorm} (ii) and (i), $\lim\sb{n\rightarrow\infty}f\sb{n}=f$ iff
$\lim\sb{n\rightarrow\infty}f\sb{n}(x)=f(x)$ for all $x\in X$.

(iii) We have $p\in{\mathcal P}$ iff $p\wedge(1-p)=0$ iff $\min(p(x),1-p(x))=0$
for all $x\in X$ iff (iii) holds.

(iv) Let $p,q\in{\mathcal P}$. Then by Lemma \ref{lm:pCq} and (iii),
\[
(pq)(x)=(p\wedge q)(x)=\min(p(x),q(x))=p(x)q(x)\text{\ for all\ }x\in X.
\]
Therefore, $pq$ is the pointwise product of $p$ and $q$, and it follows that
the product of simple elements in ${\mathcal T}$ is the pointwise product. There
are ascending sequences $(s\sb{n})\sb{n\in\Nat}$ and $(t\sb{n})\sb{n\in\Nat}$
of simple elements in ${\mathcal T}$ such that $\lim\sb{n\rightarrow\infty}s\sb{n}=
f$ and $\lim\sb{n\rightarrow\infty}t\sb{n}=g$, whence $fg=\lim\sb{n\rightarrow\infty}
(s\sb{n}t\sb{n})$, and by (ii),
\[
(fg)(x)=\lim\sb{n\rightarrow\infty}(s\sb{n}t\sb{n})(x)=\lim\sb{n\rightarrow\infty}
\left(s\sb{n}(x)t\sb{n}(x)\right)=\left(\lim\sb{n\rightarrow\infty}
 s\sb{n}(x)\right)\left(\lim\sb{n\rightarrow\infty}t\sb{n}(x)\right)
\]
\[
=f(x)g(x)\text{\ for all\ }x\in X. \qedhere
\]
\end{proof}

For a gh-tribe ${\mathcal T}$ on $X$ we define
\[
{\mathcal B}({\mathcal T}):=\{D\subseteq X:\chi_D\in{\mathcal T}\}
\]
and note that ${\mathcal B}({\mathcal T})$ is a $\sigma$-field of subsets of $X$.
Also, the unit interval ${\mathcal T}[0,1]:=\{f\in{\mathcal T}:0\leq f\leq 1\}
\subseteq\reals\sp{X}$ is a convex $\sigma$MV-algebra, which is also a convex tribe.
According to \cite[Theorem 7.4]{DvPucond}, the following statement, which is an
extension of \cite{BuKl} for tribes, holds (see also \cite{Pspec}).

\begin{theorem}\label{th:bukl} Let ${\mathcal T}$ be a gh-tribe on $X$. Then every
$f\in{\mathcal T}$ is ${\mathcal B}({\mathcal T})$-measurable, and for every
$\sigma$-additive state $\rho$ on ${\mathcal T}$, $\rho(f)=\int\sb{X}f(x)\mu(dx)$, where
$\mu$ is the probability measure on ${\mathcal B}({\mathcal T})$ defined by $\mu(D):
=\rho(\chi_D)$ for every $D\in{\mathcal B}({\mathcal T})$.
\end{theorem}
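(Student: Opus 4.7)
The plan is to prove three things in sequence: (a) $\mu$ is a well-defined probability measure on the $\sigma$-field $\mathcal{B}(\mathcal{T})$; (b) every $f\in\mathcal{T}$ is $\mathcal{B}(\mathcal{T})$-measurable in the classical sense; (c) the integral formula $\rho(f)=\int_X f\,d\mu$ holds by approximating $f$ by simple elements and invoking $\sigma$-additivity of $\rho$ together with the classical monotone convergence theorem.

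For (a), clearly $\mu(\emptyset)=\rho(0)=0$ and $\mu(X)=\rho(1)=1$. For countable additivity, fix pairwise disjoint $D_1,D_2,\ldots\in\mathcal{B}(\mathcal{T})$. By gh-tribe properties (3)--(4), the partial sums $s_n:=\sum_{k=1}^{n}\chi_{D_k}$ lie in $\mathcal{T}$; they form an ascending sequence bounded above by $1\in\mathcal{T}$, so by property (5) of a gh-tribe the pointwise supremum $\chi_{\bigcup_k D_k}=\sup_n s_n$ also lies in $\mathcal{T}$, i.e.\ $\bigcup_k D_k\in\mathcal{B}(\mathcal{T})$, and $s_n\nearrow\chi_{\bigcup_k D_k}$ in $\mathcal{T}$. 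The $\sigma$-additivity of $\rho$, combined with linearity, then yields $\sum_k\mu(D_k)=\lim_n\rho(s_n)=\rho(\chi_{\bigcup_k D_k})=\mu(\bigcup_k D_k)$.

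For (b), by Lemma \ref{lm:pointwisemult}(iii) the projections in $\mathcal{T}$ are precisely indicator functions $\chi_D$ with $D\in\mathcal{B}(\mathcal{T})$, so simple elements of the GH-algebra $\mathcal{T}$ are exactly classical simple $\mathcal{B}(\mathcal{T})$-measurable functions. By the paragraph following Lemma \ref{lm:pCq} (Corollary 8.6/Theorem 8.9 of \cite{Fsa}), every $f\in\mathcal{T}$ is the norm limit of an ascending sequence of pairwise commuting simple elements; Lemma \ref{lm:pointwisemult}(ii) then identifies this limit pointwise, exhibiting $f$ as a pointwise limit of measurable simple functions, hence measurable.

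For (c), the formula $\rho(s)=\int_X s\,d\mu$ is immediate for simple $s=\sum_{i=1}^n\alpha_i\chi_{D_i}$ (with $D_i$ pairwise disjoint) from linearity of $\rho$ and the definition of $\mu$. For a general $f\in\mathcal{T}$, write $f=f^{+}-f^{-}$ with $f^{\pm}\in\mathcal{T}^{+}$; choose ascending sequences of simple elements $s_n\nearrow f^{+}$ and $t_n\nearrow f^{-}$ in $\mathcal{T}$. The $\sigma$-additivity of $\rho$ gives $\rho(s_n)\nearrow\rho(f^{+})$ and $\rho(t_n)\nearrow\rho(f^{-})$, while classical monotone convergence in $(X,\mathcal{B}(\mathcal{T}),\mu)$ (using that $0\leq s_n\nearrow f^{+}$ pointwise by Lemma \ref{lm:pointwisemult}(ii), and similarly for $t_n$) gives $\int s_n\,d\mu\nearrow\int f^{+}\,d\mu$ and analogously for $f^{-}$. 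Since the identity $\rho(s_n)=\int s_n\,d\mu$ is already known, passing to the limit and subtracting yields $\rho(f)=\int_X f\,d\mu$.

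The main obstacle I anticipate is the bookkeeping in step (a): one must confirm that every countable union in $\mathcal{B}(\mathcal{T})$ actually lies in $\mathcal{B}(\mathcal{T})$ before invoking $\sigma$-additivity of $\rho$, which is where gh-tribe axiom (5) is doing essential work. Everything else is a standard dovetailing of the synaptic-algebraic notion of simple element (via Lemma \ref{lm:pointwisemult}) with the classical measure-theoretic simple function, so that norm convergence in $\mathcal{T}$ translates cleanly into pointwise convergence and permits appeal to monotone convergence.
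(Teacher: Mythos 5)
Your proof is correct in substance, but note that the paper does not actually prove Theorem \ref{th:bukl} at all: it is quoted from \cite[Theorem 7.4]{DvPucond}, where the result is obtained as an extension of the Butnariu--Klement integral representation for tribes to the $\ell$-group/gh-tribe setting. What you have written is therefore a genuinely independent, self-contained argument, and it is a reasonable one: step (a) correctly isolates the only nontrivial point (that $\bigcup_k D_k$ lies in ${\mathcal B}({\mathcal T})$ before $\sigma$-additivity of $\rho$ can be invoked, via axiom (5) applied to the partial sums $s_n=\chi_{D_1\cup\cdots\cup D_n}\leq 1$); step (b) correctly matches synaptic-algebra simple elements with classical simple ${\mathcal B}({\mathcal T})$-measurable functions via Lemma \ref{lm:pointwisemult}(iii) and converts norm convergence to pointwise convergence via Lemma \ref{lm:pointwisemult}(ii); and step (c) is the standard passage to the limit. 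Two small points to tighten: first, the approximation theorem you cite gives an ascending sequence of simple elements with supremum and norm limit $f^{+}$, but not automatically $s_n\geq 0$; either replace $s_n$ by $s_n\vee 0$ (still simple, still ascending to $f^{+}$) or observe that since $\mu$ is finite and $s_1\leq s_n\leq \|f\|1$, dominated convergence (or monotone convergence applied to $s_n-s_1$) suffices. Second, you implicitly use that $f^{+},f^{-}\in{\mathcal T}$ and that the lattice-theoretic positive part agrees with the synaptic one; this is covered by the discussion following Definition \ref{df:ghtribe} together with Lemma \ref{lm:simple,etc}(i), and is worth a sentence. With those touches your argument stands on its own, whereas the paper simply defers to the literature.
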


We can now state and prove our Loomis-Sikorski theorem for commutative GH-algebras.

\begin{theorem} \label{th:LSCGH} {\bf (The GH-Loomis-Sikorski theorem.)}
Let $A$ be a commutative GH-algebra and let $X$ be the basically disconnected
Stone space of the $\sigma$-complete Boolean algebra $P$ of projections in
$A$. Then there exists a gh-tribe ${\mathcal T}$ on $X$ such that
$C(X,\reals)\subseteq{\mathcal T}$ and there exists a surjective morphism
$h$ of GH-algebras from $\mathcal T$ onto $A$.
\end{theorem}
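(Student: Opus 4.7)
The plan is to reduce to the functional model provided by Theorem \ref{th:ghcom} and then to carry out a ``clopen modulo meager'' construction that generalizes the classical Loomis--Sikorski theorem from Boolean $\sigma$-algebras to GH-algebras. Using the GH-isomorphism $\Psi\colon A\to C(X,\reals)$ of Theorem \ref{th:ghcom} I identify $A$ with $C(X,\reals)\subseteq\reals\sp{X}$, take $\mathcal T$ to be the gh-tribe on $X$ generated by $C(X,\reals)$ (so that $C(X,\reals)\subseteq\mathcal T$ automatically), and construct a surjective GH-morphism $h\colon\mathcal T\to C(X,\reals)$ by ``continuously rounding off'' each $f\in\mathcal T$ modulo a meager correction; composing with $\Psi\sp{-1}$ then produces the desired morphism onto $A$.

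Let $M$ denote the $\sigma$-ideal of meager subsets of the compact Hausdorff space $X$ and set $N:=\{f\in\mathcal T:\{x\in X:f(x)\not=0\}\in M\}$. A short verification shows that $N$ is closed under sums, scalar multiples, pointwise products by arbitrary elements of $\mathcal T$, and bounded pointwise ascending suprema. The central claim is that each $f\in\mathcal T$ admits a \emph{unique} decomposition $f=g+n$ with $g\in C(X,\reals)$ and $n\in N$, after which I set $h(f):=g$. Uniqueness is immediate from the Baire category theorem: if $g\sb{1},g\sb{2}\in C(X,\reals)$ both realize such a decomposition, then the support of the continuous function $g\sb{1}-g\sb{2}$ is an open meager subset of the compact Hausdorff space $X$, hence empty. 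Existence, which is the main obstacle, I prove by induction on the construction of $\mathcal T$ from $C(X,\reals)$. The nontrivial step is closure under pointwise bounded ascending suprema: given $f\sb{n}=g\sb{n}+n\sb{n}$, I first use Baire again to force $g\sb{n}\leq g\sb{n+1}$ in $C(X,\reals)$ (the open set $\{g\sb{n}>g\sb{n+1}\}$ is contained in the meager set $\{n\sb{n}\neq 0\}\cup\{n\sb{n+1}\neq 0\}$, hence empty), and then take $g:=\bigvee\sb{n}g\sb{n}$ in the monotone $\sigma$-complete lattice $C(X,\reals)$. The key technical lemma is that $g(x)=\sup\sb{n}g\sb{n}(x)$ off a meager set: setting $h\sb{n}:=g-g\sb{n}\searrow 0$ in $C(X,\reals)$, if some $\{h\sb{n}\geq 1/k\}$ had nonempty interior it would contain a nonempty clopen $K$ (since $X$ is a Stone space), giving $h\sb{n}\geq (1/k)\chi\sb{K}$ for all $n$ and contradicting $\bigwedge\sb{n}h\sb{n}=0$ in $C(X,\reals)$.

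With $h$ well defined, the morphism axioms drop out from the decomposition $f=g+n$: linearity and $h(1)=1$ are immediate, while $f\sp{2}=g\sp{2}+(2gn+n\sp{2})$ together with the absorption of $N$ under pointwise products gives $h(f\sp{2})=g\sp{2}=h(f)\sp{2}$. Preservation of commutativity is trivial since both $\mathcal T$ and $A$ are commutative, and carriers are preserved because in the basically disconnected space $X$ the open $F\sb{\sigma}$ set $\{g\neq 0\}$ has clopen closure $K$, so that $\chi\sb{K}$ is both the carrier of $g$ in $C(X,\reals)$ and, modulo $N$, the carrier $f\sp{\dg}$ of $f$ in $\mathcal T$. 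Preservation of ascending $\sigma$-suprema of pairwise commuting sequences is exactly the key lemma of the preceding paragraph, so $h$ is a GH-morphism; surjectivity is automatic since $h$ restricted to $C(X,\reals)\subseteq\mathcal T$ is the identity.
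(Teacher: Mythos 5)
Your proof is correct in its essentials, but it reaches the conclusion by a genuinely different route than the paper. The paper first works at the level of the unit interval: it forms the convex tribe ${\mathcal T}\sb{1}$ generated by $C(X,\reals)[0,1]$, defines $h\sb{1}(f)=a$ iff $N(f-\hat{a})$ is meager (the proof of \cite[Theorem 7.1.22]{DvPu} supplies the tribe structure and surjectivity onto $E$), identifies ${\mathcal T}\sb{1}$ with ${\mathcal T}[0,1]$ via \cite[Theorem 16.9]{Good}, and only then extends $h\sb{1}$ to all of ${\mathcal T}$ using Mundici's categorical equivalence between unital abelian $\ell$-groups and MV-algebras, checking multiplicativity separately by the same meager-support estimate you use. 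You instead bypass the MV/$\ell$-group detour entirely: your single decomposition $f=g+n$ with $g\in C(X,\reals)$ and $n$ meager-supported \emph{is} the paper's map $h$ (take $g=\hat{a}$), but you establish its existence on all of ${\mathcal T}$ at once by showing the decomposable functions form a gh-tribe containing $C(X,\reals)$, with the real work concentrated in the lemma that the lattice supremum $\bigvee\sb{n}g\sb{n}$ in $C(X,\reals)$ agrees with the pointwise supremum off a meager set---the same engine that powers the cited DvPu arguments, here made explicit. Your version is more self-contained and makes all the morphism properties (linearity, squares, carriers, $\sigma$-suprema) fall out of uniqueness of one decomposition; the paper's version buys compatibility with the existing $\sigma$MV-algebra Loomis--Sikorski literature and exhibits ${\mathcal T}\sb{1}={\mathcal T}[0,1]$ along the way. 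Two small points you should patch: axiom (5) of a gh-tribe concerns arbitrary bounded sequences, not just ascending ones, so you need the (easy) observation that decomposable functions are closed under finite pointwise maxima before reducing to the ascending case; and you should record that the $g\sb{n}$ are bounded above in $C(X,\reals)$ (e.g.\ by $\|f\|\cdot 1$, via another Baire argument) before invoking monotone $\sigma$-completeness to form $\bigvee\sb{n}g\sb{n}$.
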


\begin{proof}
Noting that the effect algebra $E\subseteq A$ is a convex $\sigma$MV-algebra,
we begin by proving a version of Theorem \ref{th:LSmv} involving convexity.  As
per Definition \ref{df:hatetc}, we shall identify $X$ with the set $Ext(S(A))$ of
all extremal states on $A$. By Theorem \ref{th:ghcom}, there exists a GH-isomorphism
$\Psi:A\to C(X,\reals)$ that restricts to a Boolean isomorphism $\psi$ of $P$ onto the
Boolean algebra $P(X,\reals)\subseteq C(X,\reals)$. As in Definition \ref{df:hatetc},
for $a\in A$, we write $\hat{a}:=\Psi(a)$, so that $\hat{a}(x)=x(a)$ for all $x\in X$.

Let ${\mathcal T}_1$ be the convex tribe generated by $C(X,{\mathbb R})[0,1]=
\{\hat{a}\in C(X,{\mathbb R}): a\in E\}$. If $f\in{\mathcal T}\sb{1}$, define
$N(f):=\{x\in X:f(x)\neq 0\}$.

Let ${\mathcal T}_1\sp{\prime}$ be the set of functions $f\in{\mathcal T}_1$ with
the property that, for some $a\in E$, $N(f-\hat{a})$ is a meager set (a countable
union of nowhere dense sets).  If $a_1$ and $a_2$ are two elements of $E$ such that
$N(f-\widehat{a_1})$ and $N(f-\widehat{a_2})$ are meager sets, then $N(\widehat{a_1}
-\widehat{a_2})\subseteq N(f-\widehat{a_1})\cup N(f-\widehat{a_2})$ is a meager set.
By the Baire theorem, a non-empty open subset   of a compact Hausdorff space cannot
be a meager set, whence $\widehat{a_1}=\widehat{a_2}$, and it follows that such an
$a\in E$ is unique.  Therefore the mapping $h\sb{1}:{\mathcal T}_1\sp{\prime}\to E$
defined by $h\sb{1}(f)=a$ iff $a\in E$ and $N(f-\hat{a})$ is meager is well defined
and maps ${\mathcal T}_1\sp{\prime}$ onto $E$.

Proceeding as in the proof of \cite[Theorem 7.1.22]{DvPu}, we deduce that
${\mathcal T}_1\sp{\prime}$ is a tribe. Clearly, if $0\leq\alpha \leq 1$ and
$N(f-\hat{a})$ is meager, then $N(\alpha f-\alpha\hat{a})$ is meager, so we
may infer that ${\mathcal T}_1\sp{\prime}$ is a convex tribe that contains
$C(X,\reals)[0,1]$ and that the mapping $h\sb{1}$ preserves the convex structure.
From this it follows that ${\mathcal T}\sb{1}\subseteq{\mathcal T}\sb{1}\sp{\prime}$,
and hence ${\mathcal T}\sb{1}={\mathcal T}\sb{1}\sp{\prime}$. Moreover, the mapping
$h\sb{1}:{\mathcal T}\sb{1}\to E$ corresponds to the surjective $\sigma$MV-algebra
homomorphism of Theorem \ref{th:LSmv}.

Now let $\mathcal T$ be the gh-tribe of functions $f:X\to {\mathbb R}$ generated
by $C(X,\reals)$. We note that, by \cite[Proposition 7.1.25]{DvPu}, ${\mathcal
T}$ is the set of all bounded Baire measurable functions on $X$. We claim that
${\mathcal T}_1={\mathcal T}[0,1]=\{f\in{\mathcal T}:0\leq f\leq 1\}$. From
$C(X,\reals)[0,1]\subseteq C(X,{\mathbb R})$ we deduce that
\begin{equation}\label{eq: 1}
{\mathcal T}_1\subseteq {\mathcal T}[0,1].
\end{equation}
Let ${\mathcal V}$ be the order unit normed vector lattice generated by
${\mathcal T}_1$. By \cite[Theorem 16.9]{Good}, ${\mathcal V}$ is monotone
$\sigma$-complete since ${\mathcal T}_1$ is. It follows that ${\mathcal V}$ is
a gh-tribe, and from (\ref{eq: 1}) we conclude that ${\mathcal V}\subseteq
{\mathcal T}$. Since $C(X,\reals)\subseteq {\mathcal V}$, we conclude that
${\mathcal V}={\mathcal T}$, and hence ${\mathcal T}_1={\mathcal T}[0,1]$.

Using arguments extracted from Mundici's proof \cite{Munint} \cite[Corollary 5.3.8]
{DvPu} of the categorical equivalence of unital abelian $\ell$-groups and MV-algebras,
we conclude that there exists an extension $h\colon{\mathcal T}\to A$ of the mapping
$h\sb{1}$ such that $h$ is an $\ell$-group homomorphism, i.e., (i) $h(f)\pm h(g)
=h(f\pm g)$, (ii) $h\left(\max(f,g)\right)=h(f)\vee h(g)$, and (iii) $h(1)=1$. To
prove that $h(\sup_nf_n)=\sup_n h(f_n)$ whenever $\sup_nf_n\in{\mathcal T}$ we use
the same setup as in the proof of \cite[Theorem 7.1.24]{DvPu}. By \cite[Lemma 4.2]
{FJPstat}, $h$ is a positive linear mapping from ${\mathcal T}$ onto $A$. Since every
extremal state is multiplicative, we have for all $a,b\in E$, $\widehat{ab}=\hat{a}
\hat{b}$, and owing to the inequalities
\begin{eqnarray*}
\mid fg-\widehat{ab}\mid=\mid fg-\hat{a}\hat{b}\mid\leq\mid fg-f\hat{b}\mid+\mid f\hat{b}
 -\hat{a}\hat{b}\mid=\mid f\mid\mid g-\hat{b}\mid+\mid f-\hat{a}\mid\mid\hat{b}\mid,
\end{eqnarray*}
$N(fg-\hat{a}\hat{b})$ is a meager set provided that $N(f-\hat{a})$ and $N(g-\hat{b})$
are meager. From this it follows that $h\sb{1}(fg)=h\sb{1}(f)h\sb{1}(g)$, so that
$h\sb{1}:{\mathcal T}\sb{1}\to E$ preserves multiplication. Considering first  $f,g\in
{\mathcal T}^+$, and then taking into account the decomposition $f=f^+-f^-$ for all
$f\in{\mathcal T}$, it is easy to see that $h$ preserves multiplication.  We conclude
that $h$ is a GH-algebra morphism from the gh-tribe ${\mathcal T}$ onto $A$.
\end{proof}

Notice that the $h$ is \emph{regular} in the sense that $h(f)=0$ iff $h(\chi_{N(f)})=0$.
We define the \emph{regular representation} of the commutative GH-algebra $A$ to be
the triple $(X,{\mathcal T},h)$ in Theorem \ref{th:LSCGH}.

\section{Observables and a continuous functional \newline calculus for a GH-algebra}
\label{sc:ObFC}

\begin{assumptions}
In this section, we assume that $A$ is a GH-algebra and we fix an element $a\in A$.
Thus $a$ belongs to the commutative Banach GH-algebra $CC(a)$. Also $X$ denotes the
basically disconnected Stone space of the Boolean $\sigma$-algebra $P\cap CC(a)$ of
all projections in $CC(a)$, and $\Psi:CC(a)\to C(X,\reals)$ is the GH-isomorphism
of Theorem \ref{th:ghcom}.
\end{assumptions}

\begin{lemma} \label{le:specg}
If $g\in C(X,\reals)$, then $\spec(g)=\{g(x):x\in X\}$.
\end{lemma}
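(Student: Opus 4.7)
The plan is to use the characterization (cited from \cite[Lemma 3.1]{FPproj} in the excerpt) that $\lambda\in\spec(g)$ iff $g-\lambda$ fails to be invertible in the synaptic algebra $C(X,\reals)$, and then show directly that $g-\lambda$ is invertible in $C(X,\reals)$ iff $\lambda$ is not a value of $g$. This reduces the statement to a concrete fact about continuous real functions on the compact Hausdorff space $X$.

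First I would prove the containment $\{g(x):x\in X\}\subseteq\spec(g)$. Suppose $\lambda=g(x_{0})$ for some $x_{0}\in X$. Then $(g-\lambda)(x_{0})=0$, so if $h\in C(X,\reals)$ were to satisfy $(g-\lambda)h=1$, evaluation at $x_{0}$ would give $0=1$, a contradiction. Hence $g-\lambda$ is not invertible, and the cited lemma gives $\lambda\in\spec(g)$.

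For the reverse containment, suppose $\lambda\notin\{g(x):x\in X\}$. Then the continuous function $g-\lambda$ is nowhere zero on the compact space $X$, so $|g-\lambda|$ attains a positive minimum $\epsilon>0$; equivalently, $\epsilon\cdot 1\leq|g-\lambda|$ pointwise, hence in the order of $C(X,\reals)$. By the invertibility criterion recorded in Section~\ref{sc:SAGHA} (namely, $a\in A$ is invertible iff there exists $0<\epsilon\in\reals$ with $\epsilon\leq|a|$, from \cite[Theorem 7.2]{Fsa}), $g-\lambda$ is invertible in $C(X,\reals)$, so $\lambda\notin\spec(g)$. (One could alternatively exhibit the inverse explicitly as the continuous function $x\mapsto 1/(g(x)-\lambda)$.)

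The only subtlety is that $\spec(g)$ must be interpreted in the synaptic algebra $C(X,\reals)$ (which, by Theorem~\ref{th:ghcom} and the paragraph following Definition~\ref{df:CVProps} in the preliminaries, is indeed a commutative GH-algebra, so the spectral theory developed in Section~\ref{sc:SAGHA} applies). There is no real obstacle here; the main content is simply packaging the standard compactness argument to fit the synaptic-algebraic definition of spectrum.
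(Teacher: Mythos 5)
Your proof is correct and follows essentially the same route as the paper's: both reduce the statement to the invertibility criterion for the synaptic algebra $C(X,\reals)$ (an element is invertible iff it is bounded below in absolute value by some $\epsilon>0$) and then invoke compactness of $X$. The one difference is in how compactness is used for the inclusion $\spec(g)\subseteq\{g(x):x\in X\}$: the paper argues directly from non-invertibility, choosing points $x\sb{n}$ with $|g(x\sb{n})-\lambda|<\tfrac{1}{n}$ and extracting a convergent subsequence, whereas you argue the contrapositive, observing that if $\lambda$ is not a value of $g$ then $|g-\lambda|$ attains a positive minimum on the compact space $X$. Your version is actually slightly cleaner: the Stone space $X$ is compact Hausdorff but need not be metrizable, so the paper's extraction of a convergent \emph{subsequence} is not literally justified and should really be a cluster-point (subnet) argument; the extreme-value-theorem route avoids this issue entirely. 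Your remark that $\spec(g)$ must be read in $C(X,\reals)$ is also apt, though harmless either way since the spectrum of an element is unchanged under passage to a sub-synaptic algebra, as noted in Section~\ref{sc:SAGHA}.
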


\begin{proof}
If $\lambda\in\reals$, then $g-\lambda$ is invertible in $C(X,\reals)$ iff
there exists $0<\epsilon\in\reals$ such that $|g(x)-\lambda|\geq\epsilon$ for
all $x\in X$, whence $g-\lambda$ is not invertible in $C(X,\reals)$ iff, for
each $n\in\Nat$, there exists $x\sb{n}\in X$ with $|g(x\sb{n})-\lambda|<
\frac{1}{n}$. Since $X$ is compact, there is a subsequence $(x_{n_k})$ of
$(x_n)$ such that $\lim\sb{k\rightarrow\infty}x_{n_k}=x$, for some $x\in X$.
Since $g$ is continuous, we have $\lambda=\lim\sb{k\rightarrow\infty}g(x_{n_k})
=g(x)$ and it follows that $\spec(b)\subseteq\{g(x):x\in X\}$. The opposite
inclusion is obvious.
\end{proof}

For a subset $D\subseteq X$,  $D^-$ denotes the closure and $D^i$ denotes the interior
of $D$. If $g\in C(X,\reals)$, then by \cite[Theorem C, p. 217]{Hal}, $N(g):=
\{x\in X:g(x)\neq 0\}$ is an open $F_{\sigma}$ set, and since $X$ is basically
disconnected, $N(g)^-$ is a clopen set and the carrier of $g$ is $g\dg=
\chi_{N(g)^-}$.

\begin{theorem}\label{th:specstuff}
Let $b\in CC(a)$ with $g:=\Psi(b)\in C(X,\reals)$. Then{\rm:}
\begin{enumerate}
\item $CC(b)\subseteq CC(a)$, so the spectral resolution $(p_{b,\lambda})
 \sb{\lambda\in\reals}$ of $b$ is contained in $CC(a)$.
\item For all $\lambda\in\reals$, $\Psi(p_{b,\lambda})=\chi_{\left(g^{-1}
(-\infty,\lambda]\right)^i}$.
\item $\spec(b)=\spec(g)=\{g(x):x\in X\}$.
\end{enumerate}
\end{theorem}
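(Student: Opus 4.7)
The plan is to prove the three parts in order, leveraging that $\Psi$ is a GH-isomorphism between $CC(a)$ and $C(X,\reals)$ together with the concrete description of carriers, positive parts, and spectra in $C(X,\reals)$.

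For part (1), I would first establish the general fact that $b \in CC(a) \Rightarrow CC(b) \subseteq CC(a)$. The key identity is $C(C(C(B))) = C(B)$ for any subset $B$: one inclusion follows by applying $C$ to $B \subseteq CC(B)$, the other from $C(B) \subseteq CC(C(B))$. Taking $B = \{a\}$ gives $C(CC(a)) = C(a)$. Now $b \in CC(a)$ means $\{b\} \subseteq CC(a)$, so applying $C$ (which reverses inclusions) yields $C(b) \supseteq C(CC(a)) = C(a)$, and applying $C$ once more gives $CC(b) \subseteq CC(a)$. Since the spectral resolution $(p_{b,\lambda})_{\lambda \in \reals}$ lies in $P \cap CC(b)$ by the general theory of Section \ref{sc:SAGHA}, it is contained in $P \cap CC(a)$ as claimed.

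For part (2), I would apply $\Psi$ to the defining formula $p_{b,\lambda} = 1 - ((b-\lambda)^+)\dg$. Since $\Psi$ is a GH-isomorphism, it preserves $1$, the order structure (hence positive parts by Lemma \ref{lm:simple,etc}(i)), and carriers (by Definition \ref{df:morphisms}(4)). Thus $\Psi(p_{b,\lambda}) = 1 - ((g-\lambda)^+)\dg$ in $C(X,\reals)$. By Lemma \ref{lm:simple,etc}(i) applied to the commutative GH-algebra $C(X,\reals)$, $(g-\lambda)^+$ is the pointwise positive part, so $N((g-\lambda)^+) = \{x \in X : g(x) > \lambda\} = g^{-1}((\lambda,\infty))$, which is open. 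Using the carrier formula for continuous functions on a basically disconnected Stone space recalled just before the theorem, $((g-\lambda)^+)\dg = \chi_{\overline{g^{-1}((\lambda,\infty))}}$. Finally, the point-set identity $X \setminus \overline{U} = (X \setminus U)^{i}$ for any open $U \subseteq X$, applied with $U = g^{-1}((\lambda,\infty))$ and noting $X \setminus U = g^{-1}((-\infty,\lambda])$, yields $\Psi(p_{b,\lambda}) = \chi_{(g^{-1}((-\infty,\lambda]))^{i}}$, as required.

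For part (3), the second equality $\spec(g) = \{g(x) : x \in X\}$ is precisely Lemma \ref{le:specg}. For the first, by part (1) and the remark at the end of Section \ref{sc:SAGHA} that spectra are the same whether computed in a sub-synaptic algebra or in the ambient one, $\spec(b)$ may be computed in $CC(a)$. Since $\Psi \colon CC(a) \to C(X,\reals)$ is an isomorphism of synaptic algebras, $b - \lambda$ is invertible in $CC(a)$ iff $\Psi(b-\lambda) = g - \lambda$ is invertible in $C(X,\reals)$, and invertibility characterizes the complement of the spectrum by \cite[Lemma 3.1]{FPproj}; hence $\spec(b) = \spec(g)$.

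I expect the only genuine technical point to be the carrier computation in part (2): one must remember that carriers of continuous functions on a basically disconnected Stone space are characteristic functions of the closure of their non-zero sets, and that the complement-closure-interior manipulation is what converts the open set $g^{-1}((\lambda,\infty))$ into the expression $(g^{-1}((-\infty,\lambda]))^{i}$ appearing in the statement. Everything else is bookkeeping on the isomorphism $\Psi$ and the abstract properties of the commutant.
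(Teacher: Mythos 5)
Your proposal is correct and follows essentially the same route as the paper's proof: part (2) via applying $\Psi$ to $p_{b,\lambda}=1-((b-\lambda)^{+})\dg$, computing the carrier as $\chi_{(g^{-1}(\lambda,\infty))^{-}}$, and complementing, and part (3) via invertibility and Lemma \ref{le:specg}. The only difference is that you spell out the commutant identity $C(CC(a))=C(a)$ for part (1), which the paper leaves as ``easily verified.''
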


\begin{proof} (i) That $CC(b)\subseteq CC(a)$ is easily verified.

(ii) We have $p_{b,\lambda}=1-((b-\lambda)^+)\dg$. Also $\Psi((b-\lambda)^+)=
(g-\lambda)^+$, and $\{x\in X:(g-\lambda)^+(x)\not=0\}=\{x\in X:g(x)-\lambda>0\}
=g\sp{-1}(\lambda,\infty)$, whence $((g-\lambda)\sp{+})\dg=\chi\sb{(g\sp{-1}
(\lambda,\infty))\sp{-}}$. Therefore, $\Psi(p_{b,\lambda})=1-\left((g-\lambda)
\sp{+}\right)\dg=\chi_{\left(g^{-1}(-\infty,\lambda]\right)^i}$.

(iii) Let $\lambda\in\reals$. Recall that the spectrum of $b$ in $A$ is the same
as the spectrum of $b$ in $CC(a)$. Thus, $\lambda\in\spec(b)$ iff $b-\lambda$ is not
invertible in $A$ iff $b-\lambda$ is not invertible in $CC(a)$. Since $\Psi$ is a
synaptic isomorphism, $b-\lambda$ is not invertible in $CC(a)$ iff $\Psi(b-\lambda)
=g-\lambda$ is not invertible in $C(X,\reals)$, i.e., iff $\lambda\in\spec(g)$. Thus,
$\spec(b)=\spec(g)$, and (iii) follows from Lemma \ref{le:specg}.
\end{proof}

We recall that the set of projections $P$ in a GH-algebra is a $\sigma$-ortho\-mo\-du\-lar
lattice ($\sigma$-OML) \cite[Theorem 5.4]{FPgh}. On any $\sigma$-OML $L$, an \emph{observable}
$\xi$ is a $\sigma$-morphism from a Boolean algebra ${\mathcal B}$ into $L$, that is,
$\xi:{\mathcal B} \to L$ has the following properties: (i) $\xi(1)=1$, (ii) $\xi(a\cup b)
=\xi(a)\vee \xi(b)$ whenever $a,b\in {\mathcal B}$ are such that $a\cap b=0$, and (iii) if
$a\in {\mathcal B}, (a_n)_{n\in\Nat}\subseteq{\mathcal B}$, and $a_n\nearrow a$, then $\xi(a_i)
\nearrow \xi(a)$. An observable is \emph{real} if ${\mathcal B}={\mathcal B}(\reals)$ is
the Boolean $\sigma$-algebra of Borel subsets of $\reals$. If $\rho$ is a $\sigma$-additive
state on $L$, then $\rho\circ\xi:{\mathcal B}\to [0,1]$ is a probability measure on
${\mathcal B}$, which is called the \emph{distribution of $\xi$} in the state $\rho$. The
\emph{expectation} of a real observable $\xi$ in a state $\rho$ is then given by $\rho(\xi)
:=\int_{\mathbb R}\lambda\rho(\xi(d\lambda))$. (For more details on observables on
$\sigma$-OMLs see e.g. \cite{Var, PtPu}.) In the next theorem, we show how each element $a$
of the GH-algebra $A$ corresponds to a real observable $\xi_a$ on the $\sigma$-OML $P$ of
projections in $A$.

\begin{theorem}\label{th:observ} Let $(X,{\mathcal T},h)$ be the regular Loomis-Sikorski
representation of $CC(a)$ and choose $f\sb{a}\in{\mathcal T}$ with $h(f_a)=a$. Define
$\xi_a\colon{\mathcal B}(\reals)\to P$ by $\xi_a(D)=h(\chi\sb{f\sb{a}\sp{-1}(D)})$ for
every $D\in{\mathcal B}(\reals)$. Then $\xi_a$ is a real observable on $P$ that is
independent of the choice of $f_a$; moreover, $\xi_a$ is the unique real observable on
$P$ such that $\xi_a((-\infty,\lambda])=p\sb{a,\lambda}$ for all $\lambda \in\reals$.
\end{theorem}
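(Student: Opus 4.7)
The plan is to verify the three parts of the conclusion---that $\xi_a$ is a well-defined real observable on $P$, that it is independent of the choice of $f_a\in h^{-1}(a)$, and that it coincides with $p_{a,\lambda}$ on the half-lines and is uniquely determined by this property---by reducing each claim to the properties of the regular representation $(X,\mathcal{T},h)$ from Theorem \ref{th:LSCGH} together with the explicit formula for $\Psi(p_{a,\lambda})$ in Theorem \ref{th:specstuff}(ii). Recall that $\mathcal{T}$ consists of the bounded Baire measurable functions on $X$ and that $h$ extends the GH-isomorphism $\Psi^{-1}\colon C(X,\reals)\to CC(a)$.

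For well-definedness and the observable axioms, I first note that $f_a^{-1}(D)$ is Baire measurable for every $D\in\mathcal{B}(\reals)$, so $\chi_{f_a^{-1}(D)}\in\mathcal{T}$; since this function is idempotent under the pointwise multiplication of $\mathcal{T}$ (Lemma \ref{lm:pointwisemult}(iv)) and $h$ preserves squares, $\xi_a(D)$ lies in $P$. The normalization $\xi_a(\reals)=h(1)=1$ is immediate; for disjoint $D_1,D_2$ the identity $\chi_{f_a^{-1}(D_1\cup D_2)}=\chi_{f_a^{-1}(D_1)}+\chi_{f_a^{-1}(D_2)}$ together with the linearity of $h$ gives additivity (the resulting projections are orthogonal, so their sum equals their join); and if $D_n\nearrow D$ in $\mathcal{B}(\reals)$, then $\chi_{f_a^{-1}(D_n)}\nearrow\chi_{f_a^{-1}(D)}$ in $\mathcal{T}$, so the GH-morphism property (item (5) of Definition \ref{df:morphisms}) delivers $\xi_a(D_n)\nearrow\xi_a(D)$. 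Independence of the choice of $f_a$ will follow from the regularity of $h$: if $h(f_a)=h(f_a')=a$, then $h(f_a-f_a')=0$ forces $h(\chi_{N(f_a-f_a')})=0$, and because the symmetric difference of $f_a^{-1}(D)$ and $(f_a')^{-1}(D)$ is contained in $N(f_a-f_a')$, one has $|\chi_{f_a^{-1}(D)}-\chi_{(f_a')^{-1}(D)}|\leq\chi_{N(f_a-f_a')}$; applying the positive linear $h$ yields the desired equality.

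To identify $\xi_a((-\infty,\lambda])$ with $p_{a,\lambda}$, it is convenient to specialize to the canonical choice $f_a:=\hat{a}=\Psi(a)\in C(X,\reals)\subseteq\mathcal{T}$, for which $h(\hat{a})=a$. Writing $G:=\hat{a}^{-1}((-\infty,\lambda])$, the set $G$ is closed, and Theorem \ref{th:specstuff}(ii) identifies its interior as the clopen set supporting $\Psi(p_{a,\lambda})=\chi_{G^i}$, so that $p_{a,\lambda}=h(\chi_{G^i})$. The boundary $G\setminus G^i$ is closed with empty interior, hence nowhere dense and therefore meager; by the construction of $h_1$ on $\mathcal{T}_1$ in the proof of Theorem \ref{th:LSCGH}, $h$ annihilates the indicator of any meager Baire set. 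Linearity of $h$ then gives $\xi_a((-\infty,\lambda])=h(\chi_G)=h(\chi_{G^i})+h(\chi_{G\setminus G^i})=p_{a,\lambda}$. Uniqueness is then a standard Dynkin class argument: the collection of $D\in\mathcal{B}(\reals)$ on which two real observables on $P$ agree is closed under proper differences (using the orthomodular identity $q=p\vee(q\wedge p^{\perp})$ whenever $p\leq q$ in the OML $P$) and under countable monotone unions, so the $\pi$--$\lambda$ theorem propagates agreement from the generating $\pi$-system $\{(-\infty,\lambda]:\lambda\in\reals\}$ to all of $\mathcal{B}(\reals)$.

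The main obstacle is the identification step: one must show that the passage from the closed sublevel set $G$ to its interior $G^i$ costs nothing under $h$. This rests on the twin facts that $G^i$ is \emph{clopen} (supplied by Theorem \ref{th:specstuff}(ii), and ultimately by the basic disconnectedness of $X$) and that $h$ kills the indicators of meager Baire sets. Topologically, the boundary of a closed set is automatically nowhere dense; but fitting this inside the framework $\chi_{G\setminus G^i}\in\mathcal{T}$ with $h(\chi_{G\setminus G^i})=0$ is the delicate ingredient and the only point at which the spectral machinery of Section \ref{sc:SAGHA} really enters.
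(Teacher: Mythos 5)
Your proof is correct and follows essentially the same route as the paper's: well-definedness and the observable axioms from the $\mathcal{B}(\mathcal{T})$-measurability of $f_a$ and the morphism properties of $h$, independence of the choice of $f_a$ via the meagerness of $N(f_a-f_a')$ containing the symmetric difference of preimages, and the identification $\xi_a((-\infty,\lambda])=p_{a,\lambda}$ via Theorem \ref{th:specstuff}(ii) together with the fact that $h$ annihilates indicators of meager sets. You merely fill in details the paper leaves as ``straightforward'' or ``clear'' (the $\pi$--$\lambda$ uniqueness argument, and the observation that the sublevel set is closed so that passing to its interior only discards a nowhere dense boundary).
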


\begin{proof} Since $f\sb{a}\in {\mathcal T}$ is ${\mathcal B}({\mathcal T})$-measurable
(Theorem \ref{th:bukl}), $f_a^{-1}$ maps ${\mathcal B}(\reals)$ to ${\mathcal B}
({\mathcal T})$, and since $h$ maps $\{\chi_J:J\in{\mathcal B}({\mathcal T})\}$ onto
$P\cap CC(a)\subseteq P$, the mapping $\xi_a\colon{\mathcal B}(\reals)\to P$ is
well-defined. It is straightforward to check that $\xi_a$ is a real observable

Let $f,g\in {\mathcal T}$ with $h(f)=h(g)$. Then, if $\Delta$ is the set-theoretic
symmetric difference on $X$, we have $f^{-1}(D)\Delta g^{-1}(D)\subseteq N(f-g)$,
where $N(f-g)$ is a meager set, whence the characteristic function of $f^{-1}(D)
\Delta g^{-1}(D)$ belongs to the kernel of $h$. Therefore, $h(\chi\sb{f\sp{-1}(D)})
=h(\chi\sb{g\sp{-1}(D)})$ for all $D\in{\mathcal B}(\reals)$. In particular, if
$a=h(f\sb{a})=h(g)$, then $h(\chi\sb{f_a\sp{-1}(D)})=h(\chi\sb{g\sp{-1}(D)})$, whence
$\xi_a$ is independent of the choice of $f\sb{a}$.

By Theorem \ref{th:specstuff} (ii) with $b=a$ and $\Psi(a)=g\in C(X,\reals)$, we have
$p\sb{a,\lambda}=h(\chi\sb{(g\sp{-1}(-\infty,\lambda])\sp{i}})$. But, since the set
difference between a set and its interior is  meager, we have $p_{a,\lambda}=
h(\chi\sb{g\sp{-1}(-\infty,\lambda]})=\xi_a((-\infty,\lambda])$ for all $\lambda
\in\reals$. But, for any real observable $\xi$ on $P$, it is clear that the values
of $\xi$ on intervals $(-\infty,\lambda]$ for $\lambda\in\reals$, uniquely determine
the observable $\xi$.
\end{proof}

We note that since every element in $A$ is uniquely determined by its spectral
resolution, the observable $\xi_a$ uniquely determines $a$.

\begin{corollary}\label{co:stat} For every $\sigma$-additive state $\rho$ on $A$,
\[
\rho(a)=\int_{\mathbb R} \lambda \rho(\xi_a(d\lambda)), a\in A,
\]
that is, $\rho(a)$ is equal to the expectation of the observable $\xi_a$ in the state $\rho$.
\end{corollary}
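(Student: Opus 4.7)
The plan is to route the proof through the regular Loomis--Sikorski representation $(X,{\mathcal T},h)$ of $CC(a)$ supplied by Theorem \ref{th:LSCGH}, together with Theorem \ref{th:observ} defining $\xi_a$ via a chosen $f_a\in{\mathcal T}$ with $h(f_a)=a$. The idea is to pull $\rho$ back to a $\sigma$-additive state on the gh-tribe ${\mathcal T}$, apply the integral representation of Theorem \ref{th:bukl}, and then re-express the resulting integral in terms of the distribution of $\xi_a$ via a standard change-of-variables.

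First, I would define $\widetilde{\rho}\colon{\mathcal T}\to\reals$ by $\widetilde{\rho}(f):=\rho(h(f))$. It is immediate that $\widetilde{\rho}(1)=\rho(1)=1$ and that $\widetilde{\rho}\geq 0$ on ${\mathcal T}^{+}$, since $h$ is a GH-algebra morphism (hence positive) and $\rho$ is a state. For $\sigma$-additivity, if $f_n\nearrow f$ in ${\mathcal T}$, then by condition (5) in Definition \ref{df:morphisms} $h(f_n)\nearrow h(f)$ in $CC(a)$, and the hypothesis that $\rho$ is $\sigma$-additive on $A$ yields $\widetilde{\rho}(f_n)=\rho(h(f_n))\nearrow\rho(h(f))=\widetilde{\rho}(f)$. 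Thus $\widetilde{\rho}$ is a $\sigma$-additive state on ${\mathcal T}$. Applying Theorem \ref{th:bukl} to $\widetilde{\rho}$ yields a probability measure $\nu$ on ${\mathcal B}({\mathcal T})$ with $\nu(D)=\widetilde{\rho}(\chi_D)$ such that
$$\rho(a)=\widetilde{\rho}(f_a)=\int_X f_a(x)\,\nu(dx).$$

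Next, since $f_a$ is ${\mathcal B}({\mathcal T})$-measurable (again by Theorem \ref{th:bukl}) and bounded, the push-forward $\mu_a:=\nu\circ f_a^{-1}$ is a well-defined probability measure on the Borel $\sigma$-algebra ${\mathcal B}(\reals)$, and the standard change-of-variables formula gives
$$\int_X f_a(x)\,\nu(dx)=\int_{\reals}\lambda\,\mu_a(d\lambda).$$
To finish, I would identify $\mu_a$ with the distribution of the observable $\xi_a$ in the state $\rho$: for every $D\in{\mathcal B}(\reals)$,
$$\mu_a(D)=\nu\bigl(f_a^{-1}(D)\bigr)=\widetilde{\rho}\bigl(\chi_{f_a^{-1}(D)}\bigr)=\rho\bigl(h(\chi_{f_a^{-1}(D)})\bigr)=\rho(\xi_a(D)),$$
where the last equality is the very definition of $\xi_a$ in Theorem \ref{th:observ}. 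Chaining the three displayed equations produces exactly $\rho(a)=\int_{\reals}\lambda\,\rho(\xi_a(d\lambda))$, as asserted.

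The main obstacle I anticipate is the $\sigma$-additivity step for $\widetilde{\rho}$, because one must be sure that the order-theoretic suprema used to define $\sigma$-additivity inside the sub-GH-algebra $CC(a)$ agree with those inside the ambient $A$. This follows from the CV property (Definition \ref{df:CVProps}) applied to the pairwise commuting bounded ascending sequence $h(f_n)\in CC(a)$: the supremum provided by CV lies in $CC(\{h(f_n):n\in\Nat\})\subseteq CC(a)$ and is a supremum in $A$, so the given $\sigma$-additivity of $\rho$ on $A$ transfers cleanly to $\rho|_{CC(a)}$ and hence, via $h$, to $\widetilde{\rho}$ on ${\mathcal T}$. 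Once that is in hand, the remainder is routine measure theory.
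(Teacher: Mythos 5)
Your proposal is correct and follows essentially the same route as the paper's proof: pull $\rho$ back to a $\sigma$-additive state on the gh-tribe via $f\mapsto\rho(h(f))$, apply the integral representation of Theorem \ref{th:bukl}, and use the change-of-variables (push-forward) identity together with the definition of $\xi_a$ to rewrite the integral over $X$ as $\int_{\reals}\lambda\,\rho(\xi_a(d\lambda))$. Your extra care about the agreement of suprema in $CC(a)$ and in $A$ (via the CV property) fills in a detail the paper passes over silently, but it does not change the argument.
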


\begin{proof}
Let $\rho$ be a $\sigma$-additive state on $A$. The restriction of $\rho$ to $CC(a)$ is a
$\sigma$-additive state on $CC(A)$. Let $(X,{\mathcal T},h)$ be the regular representation of
$CC(a)$. Define $\hat{\rho}:{\mathcal T}\to\reals$ by $\hat{\rho}(f):=\rho(h(f))$. If $h(f)=0$,
then $\hat{\rho}(f)=0$, hence $\ker(\hat{\rho})\supseteq \ker(\rho)$. Since $\rho$ is a
$\sigma$-additive state and $h$ is a morphism of GH-algebras, $\hat{\rho}$ is $\sigma$-additive
state on ${\mathcal T}$ vanishing on the kernel of $h$.

Choose $f\sb{a}\in{\mathcal T}$ with $h(f_a)=a$, so that $\rho(a)=\hat{\rho}(f_a)$. From the
integral representation of $\hat{\rho}$ (Theorem \ref{th:bukl}), and the integral transformation
theorem, we obtain for every $f\in {\mathcal T}$, $\hat{\rho}(f)=\int_X f(x)\hat{\rho}(dx)=
\int_{\mathbb R}\lambda\hat{\rho}(f^{-1}(d\lambda))$. Thus, $\rho(a)=\int_{\mathbb R}
\lambda\rho(h(f_a^{-1}(d\lambda))=\int_{\mathbb R}\lambda\rho(\xi_a(d\lambda))$.
\end{proof}

Our continuous functional calculus for the GH-algebra $A$ is based on the following
definition.

\begin{definition} \label{df:f(a)}
Let $f\in C(\spec(a),\reals)$ and let $g:=\Psi(a)\in C(X,\reals)$. By Theorem
\ref{th:specstuff} (iii), $g(x)\in\spec(a)$ for all $x\in X$, whence the composition
$f\circ g$ is defined and $f\circ g\in C(X,\reals)$. Thus we can and do define
the element $f(a)\in CC(a)$ by $f(a):=\Psi\sp{-1}(f\circ g)$.
\end{definition}

Notice that if $q(t)=\alpha_0+\alpha_1 t+\alpha_2t^2+\ldots \alpha_nt^n$ is a real polynomial, then
the element $q(a)=\alpha_0+\alpha_1a+\alpha_2a^2+\ldots+ \alpha_na^n$ coincides with the  element  $q(a)$ defined
by Definition \ref{df:f(a)}.  Indeed, if $g:=\Psi(a)\in C(X,\reals)$, then
$q\circ g(x)=q(g(x))=\alpha_0+\alpha_1 g(x)+\alpha_2g(x)^2+\ldots +\alpha_ng(x)^n$,
and hence $\Psi(q(a))=q\circ g$.

Notice that by the Stone-Weierstrass theorem, to every
 $f\in C(spec(a),\reals)$
there is a sequence of real polynomials $q_n$ such that $q_n\rightarrow f$ uniformly on $spec(a)$.
It is easy to check that $f(a)$ is the norm limit of the polynomials $q_n(a)$ in $A$.

\begin{theorem}\label{th:RSint}  Let $f\in C(spec(a),\reals)$.
Then $f(a)=\int_{L_a-0}^{U_a}f(\lambda)dp_{a,\lambda}$.
\end{theorem}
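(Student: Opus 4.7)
The plan is to transport the problem to $C(X,\reals)$ via the GH-isomorphism $\Psi\colon CC(a)\to C(X,\reals)$ and verify the identity there by an explicit pointwise-plus-uniform estimate on Riemann-Stieltjes sums. Since $\Psi$ is an isometric isomorphism of order unit normed Banach spaces, it commutes with norm limits and hence with the Riemann-Stieltjes integral of any continuous integrand with respect to an ascending family of projections. Writing $g:=\Psi(a)\in C(X,\reals)$ and $E\sb{\lambda}:=\Psi(p\sb{a,\lambda})=\chi\sb{(g\sp{-1}(-\infty,\lambda])\sp{i}}$ (by Theorem \ref{th:specstuff}(ii)), it therefore suffices to show
\[
\int\sb{L\sb{a}-0}\sp{U\sb{a}}f(\lambda)\,dE\sb{\lambda}=f\circ g\quad\text{in }C(X,\reals),
\]
because the left-hand side is $\Psi(\int f(\lambda)\,dp\sb{a,\lambda})$ and the right-hand side is $\Psi(f(a))$ by Definition \ref{df:f(a)}.

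The first step is to pin down the pointwise behavior of $E\sb{\lambda}$. Fix $x\in X$. If $g(x)<\lambda$, then continuity of $g$ gives an open neighborhood $U$ of $x$ with $g<\lambda$ on $U$, so $U\subseteq g\sp{-1}(-\infty,\lambda]$, forcing $E\sb{\lambda}(x)=1$. Conversely, if $E\sb{\lambda}(x)=1$, then some neighborhood of $x$ lies in $g\sp{-1}(-\infty,\lambda]$, so $g(x)\leq\lambda$. Consequently $\lambda\mapsto E\sb{\lambda}(x)$ is a non-decreasing $\{0,1\}$-valued function whose unique jump occurs at $\lambda=g(x)$, and $E\sb{U\sb{a}}(x)=1$, $E\sb{\lambda\sb{0}}(x)=0$ for any $\lambda\sb{0}<L\sb{a}$ since $g(x)\in\spec(a)\subseteq[L\sb{a},U\sb{a}]$ by Theorem \ref{th:specstuff}(iii).

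Now let $\varepsilon>0$. By uniform continuity of $f$ on the compact set $[L\sb{a},U\sb{a}]$, choose $\delta>0$ with $|f(s)-f(t)|<\varepsilon$ whenever $|s-t|<\delta$. For any partition $\lambda\sb{0}<\lambda\sb{1}<\cdots<\lambda\sb{n}$ with $\lambda\sb{0}<L\sb{a}$, $\lambda\sb{n}=U\sb{a}$, mesh $<\delta$, and tags $\mu\sb{i}\in[\lambda\sb{i-1},\lambda\sb{i}]$, the Riemann-Stieltjes sum
\[
S(x):=\sum\sb{i=1}\sp{n}f(\mu\sb{i})\bigl(E\sb{\lambda\sb{i}}(x)-E\sb{\lambda\sb{i-1}}(x)\bigr)
\]
collapses, for each $x$, to the single term corresponding to the unique $i$ at which the jump of $\lambda\mapsto E\sb{\lambda}(x)$ falls, namely the $i$ with $g(x)\in[\lambda\sb{i-1},\lambda\sb{i}]$. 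Thus $S(x)=f(\mu\sb{i})$ with $|\mu\sb{i}-g(x)|\leq\lambda\sb{i}-\lambda\sb{i-1}<\delta$, so $|S(x)-f(g(x))|<\varepsilon$. Since this bound is uniform in $x$, Lemma \ref{lm:rhovsnorm} yields $\|S-f\circ g\|<\varepsilon$, and the Riemann sums converge in the order-unit (= supremum) norm on $C(X,\reals)$ to $f\circ g$.

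Combining these steps, $\int\sb{L\sb{a}-0}\sp{U\sb{a}}f(\lambda)\,dE\sb{\lambda}=f\circ g$, and pulling back through $\Psi\sp{-1}$ gives the desired formula. The only delicate point in the argument is the pointwise analysis of $E\sb{\lambda}$, specifically the behavior at boundary partition points where $g(x)$ happens to equal some $\lambda\sb{i}$; this is harmless because the jump of $\lambda\mapsto E\sb{\lambda}(x)$ occurs at a single value, so at worst one of the two adjacent differences $E\sb{\lambda\sb{i}}(x)-E\sb{\lambda\sb{i-1}}(x)$ is $1$ and the estimate $|\mu\sb{i}-g(x)|<\delta$ still holds. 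The remainder of the argument is routine.
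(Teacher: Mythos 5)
Your argument is correct, and it reaches the identity by a somewhat different mechanism than the paper. The paper starts from the already-established norm convergence $a=\int_{L_a-0}^{U_a}\lambda\,dp_{a,\lambda}$, writes the approximating Riemann--Stieltjes sums as simple elements $a_n=\sum_i\lambda_{i,n}r_{i,n}$ with $\lambda_{i,n}\in\spec(a)$, observes that $f(a_n)=\Psi^{-1}(f\circ g_n)=\sum_i f(\lambda_{i,n})r_{i,n}$ are then Riemann--Stieltjes sums for $\int f(\lambda)\,dp_{a,\lambda}$, and uses uniform continuity of $f$ to pass from $\|g_n-g\|\to 0$ to $\|f\circ g_n-f\circ g\|\to 0$. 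You instead bypass the representation of $a$ itself and evaluate the transported sums $\sum_i f(\mu_i)(E_{\lambda_i}-E_{\lambda_{i-1}})$ pointwise, using the clean observation that for fixed $x$ the map $\lambda\mapsto E_\lambda(x)$ is a $\{0,1\}$-valued nondecreasing function whose jump sits at $g(x)$ (a correct consequence of Theorem \ref{th:specstuff}(ii)), so each sum collapses to a single value $f(\mu_i)$ with $|\mu_i-g(x)|<\delta$. Your route is more self-contained and makes the convergence of the integral explicit rather than inherited; the paper's route is shorter because it leans on the known convergence of the sums for $a$.

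One point you should tighten: $f$ is only defined on $\spec(a)$, which may be a proper subset of $[L_a,U_a]$, yet you invoke ``uniform continuity of $f$ on $[L_a,U_a]$'' and evaluate $f$ at arbitrary tags $\mu_i\in[\lambda_{i-1},\lambda_i]$. Either extend $f$ continuously to $[L_a,U_a]$ (Tietze), or adopt the convention implicit in the paper that tags are taken in $\spec(a)$ whenever the corresponding increment $E_{\lambda_i}-E_{\lambda_{i-1}}$ is nonzero (which is possible, since a nonzero increment forces $[\lambda_{i-1},\lambda_i]\cap\spec(a)\neq\emptyset$, as $g(x)$ lies in that interval for some $x$ and $g(x)\in\spec(a)$ by Theorem \ref{th:specstuff}(iii)); with either fix your estimate $|f(\mu_i)-f(g(x))|<\varepsilon$ goes through unchanged. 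This is a presentational repair, not a gap in the idea.
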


\begin{proof} We have $a=\int_{L_a-0}^{U_a}\lambda dp_{a,\lambda}$. The integral on the
right is a norm-limit of sums of the form $a_n:=\sum_{i=0}^n\lambda_{i,n}r_{i,n}$, where
$\lambda _{i,n}\in spec(a), n\in\Nat$, $r_{i,n}\in P\cap CC(a)$.
Let $\Psi(a_n)=g_n=\sum_{i=1}^n \lambda_{i,n}\Psi(r_{i,n})\in C(X,{\mathbb R})$ and $\Psi(a)=g\in C(X,{\mathbb R})$.
Then $f(a_n)= \Psi^{-1}(f\circ g_n)=\sum_{i=1}^n f(\lambda_{i,n})r_{i,n}$ with $f(\lambda_{i,n})\in spec(f(a))=spec(f\circ g)$.
We have $\|f(a_n)-f(a)\|= \|f\circ g_n-f\circ g\|$. Since $f$ is continuous on the compact set $spec(a)=spec(g)$,
it is uniformly continuous, and from $\|a_n-a\|=\|g_n-g\|\rightarrow 0$ we get $\|f(a_n)-f(a)\|\rightarrow 0$.

This proves that $f(a_n), n\in\Nat$ are Riemannian sums for $f(a)$, which entails
the result.
\end{proof}


\begin{thebibliography}{WW}
\bibitem{Alf} E.M. Alfsen: Compact Convex Sets and Boundary Integrals, Springer-Verlag,
Heidelberg, New York, 1971.

\bibitem{BW} G. Barbieri, H. Weber: Measures on clans and on MV-algebras, in: E. Pap (
Ed.), Handbook of Measure Theory, vol. II, Elsevier, Amsterdam, 2002, Chapter 22.

\bibitem{Birk} Garrett Birkhoff, Lattice Theory, A.M.S. Colloquium Publications,
XXV, Providence, R.I., 1967.

\bibitem{BLPM}  P. Busch, P. Lahti, J. J. Pekka, P. Mittelstaedt: The quantum
theory of measurement, Second edition. Lecture Notes in Physics. New Series m:
Monographs, 2. Springer-Verlag, Berlin, 1996.

\bibitem{BBGP} S. Gudder, E. Beltrametti, S. Bugajski,  S. Pulmannov\'{a}:
Convex and linear effect algebras, Rep. Math. Phys. {\bf 44} (1999) 359--379.

\bibitem{BGP} S. Bugajski, S. Gudder, S. Pulmannov\'a: Convex effect algebras,
state ordered effect algebras and ordered linear spaces, Rep. Math. Phys.
{\bf 45} (2005), 371--388.

\bibitem{BuKl} D. Butnariu, E.P. Klement: Triangular norm-based measures and
their Markov kernel representation, J. Math. Anal. Appl. {\bf 162} (1991) 111--143.

\bibitem{Chang} C.C. Chang: Algebraic analysis of many-valued logic, Trans. Amer.
Math. Soc. {\bf 88} (1958) 467--490.

\bibitem{Crist} R. Cristecsu: Ordered Vector Spaces and Linear Operators, Editura
Academii Bucuresti and  ABACUS Press, Tunbridge Wells, Kent, 1976.

\bibitem{Dvls}A. Dvure\v censkij: Loomis-Sikorski theorem for $\sigma$-complete
MV-algebras and $\ell$-groups, J. Austral. Math. Soc. {\bf Ser. A 68} (2000) 261-277.

\bibitem{DvPu} A. Dvure\v censkij, S. Pulmannov\'a: New Trends in Quantum Structures,
Inter Science, Bratislava and Kluwer, Dordrecht, 2000.

\bibitem{DvPucond} A. Dvure\v censkij, S. Pulmannov\'a: Conditional probability on
$\sigma$-MV-algebras, Fuzzy Sets and Syst. {\bf 155} (2005), 102--118.

\bibitem{FoBe} D.J. Foulis, M.K. Bennett: Effect algebras and unsharp quantum
logics, Found. Phys. {\bf 24} (1994) 1331--1352.

\bibitem{Fsa} D.J. Foulis: Synaptic algebras, Math. Slovaca {\bf 60} (2010)
631--654.

\bibitem{FPsr} D.J. Foulis, S. Pulmannov\'{a}:  Spectral resolution in an
order-unit space, Rep. Math. Phys. {\bf 62}, no. 3 (2008) 323-–344.

\bibitem{FPgh} D.J. Foulis, S. Pulmannov\'a: Generalized Hermitian algebras,
Internat. J. Theoret. Phys. {\bf 48} (2009) 1320--1333.

\bibitem{FPspin} D. Foulis, S. Pulmannov\'a: Spin factors as generalized Hermitian
algebras, Found. Phys. {\bf 39} (2009) 237--255.

\bibitem{FPproj} D.J. Foulis, S. Pulmannov\'a: Projections in a synaptic algebra,
Order {\bf 27} (2010) 235--257.

\bibitem{FPreg} D.J. Foulis, S. Pulmannov\'a: Regular elements in generalized
Hermitian algebra, Math. Slovaca {\bf 61} (2011) 155--172.

\bibitem{FPtype} D.J. Foulis, S. Pulmannov\'a: Type-decomposition of a synaptic
algebra, Found. Phys. {\bf 43}, no.  8  (2013) 948--968.

\bibitem{FPSym} D.J. Foulis, S. Pulmannov\'a:: Symmetries in a synaptic algebra,
Math. Slovaca {\bf 64}, no. 3 (2014) 751--776.

\bibitem{FPcom} D.J. Foulis, S. Pulmannov\'a: Commutativity in synaptic algebras,
Math. Slovaca {\bf 66} (2016) 469--482.

\bibitem{FPHand} D.J. Foulis, S. Pulmannov\'a: Handelman's theorem for an order
unit normed space, ArXiv:1609.08014v1[math.FA] 23 Sep 2016.

\bibitem{FJP2proj} D.J. Foulis, A. Jen\v cov\'a, S. Pulmannov\'a: Two projections
in a synaptic algebra, Linear Algebra Appl. {\bf 478} (2015) 163--287.

\bibitem{FJPprojef} D.J. Foulis, A. Jen\v cov\'a, S. Pulmannov\'a: A projection
and an effect in a synaptic algebra, Linear Algebra Appl. {\bf 485} (2015) 417--441.

\bibitem{FJPstat} D.J. Foulis, A. Jen\v cov\'a, S. Pulmannov\'a: States and
synaptic algebras, Rep. Math. Phys., to appear, ArXiv:1606.08229[math-ph].

\bibitem{FJPvect} D.J. Foulis, A. Jen\v cov\'a, S. Pulmannov\'a: Vector lattices
in synaptic algebras, to appear, ArXiv:1605.06987[math.RA].

\bibitem{Good} K.R. Goodearl: Partially Ordered Abelian Groups with Interpolation,
Math. Surveys and Monographs No. 20, AMs, Providence, 1986.

\bibitem{Hal} P. R. Halmos, Measure Theory, D. van Nostrand, New York, 1954.

\bibitem{Hand} D. Handelman: Rings with involution as partially ordered abelian
groups, Rocky Mt. J. Math. {\bf 11} (1981) 337--381.

\bibitem{K} G. Kalmbach: Orthomodular Lattices, Academic Press, London, New
York, 1983.

\bibitem{Loo} L.H. Loomis: On the representation of $\sigma$-complete Boolean
algebras, Bull. Amer. Math. Soc. {\bf 53} (1947) 757--760.

\bibitem{Mac} G.W. Mackey: Mathematical Foundations of Quantum Mechanics,
Benjamin, New York, 1963.

\bibitem{Mcc} K. McCrimmon: A taste of Jordan algebras, Universitext,
Springer-Verlag, New York, 2004.

\bibitem{Munls} D. Mundici: Tensor product and the Loomis-Sikorski
theorem for MV-algebras, Adv. Appl. Math. {\bf 22} (1999) 227--248.

\bibitem{Munint} D. Mundici: Interpretation of AF-C*-algebras in {\L}ukasiewicz
sentential calculus, J. Funct. Anal. {\bf 65} (1986) 15--63.

\bibitem{Pmv} S. Pulmannov\'a: A spectral theorem for $\sigma$-MV algebras,
Kybernetika {\bf 41} (2005), 361-374.

\bibitem{Pspec} S. Pulmannov\'a: Spectral resolutions in Dedekind
$\sigma$-complete $\ell$-groups, J. Math. Anal. Appl. {\bf 309} (2005) 322--335.

\bibitem{Pid} S. Pulmannov\'a: A note on ideals in synaptic algebras, Math.
Slovaca {\bf 62}, no. 6 (2012), 1091--1104.

\bibitem{PtPu} P. Pt\'ak, S. Pulmannov\'a: Orthomodular Structures as Quantum Logics,
Kluwer, Dordrecht, 1991.

\bibitem{ZR} Z. Rie\v{c}anov\'{a}, Generalization of blocks for D-lattices and
lattice-ordered effect algebras, Internat. J. Theoret. Phys. {\bf 39}, no. 2
(2000) 231-–237.

\bibitem{Sch} H.H. Schaefer: Banach Lattices and Positive operators, Springer,
Berlin Heidelberg New York, 1974.

\bibitem{Sik} R. Sikorski: Boolean Algebras, Springer, Berlin-Heidelberg-New
York, 1964.

\bibitem{Var} V.S. Varadarajan: Geometry of Quantum Theory, Springer-Verlag,
New York-Berlin, 1985.

\end{thebibliography}
\end{document}